\newtheorem{theorem}                   {Theorem} 
\newtheorem{thm}             [theorem] {Theorem} 
\newtheorem{lemma}           [theorem] {Lemma}  
\newtheorem{corollary}       [theorem] {Corollary}   
\newtheorem{cor}             [theorem] {Corollary}
\newtheorem{claim}           [theorem] {Claim}
\theoremstyle{remark}
\newtheorem{remark}          [theorem] {Remark}
\newcommand{\eps}{\varepsilon}
\newcommand{\cH}{\mathcal{H}}
\newcommand{\cL}{\mathcal{L}}
\newcommand{\EE}{\mathbb{E}}
\newcommand{\Nat}{\mathbb{N}}
\newcommand{\Bi}{\mathrm{Bi}}
\newcommand{\Var}{\mathrm{Var}}
\newcommand{\cond}{\; \middle\vert \;}
\newcommand{\event}[1]{\mathcal{A}_{#1}}
\newcommand{\disc}{{ G_j}}   
\newcommand{\lbr}{{ \mathcal{T}_*}}                 
\newcommand{\slbr}{{ \left|\mathcal{T}_*\right|}}
\newcommand{\ubr}{{ \mathcal{T}^*}}                
\newcommand{\subr}{{ \left|\mathcal{T}^*\right|}}
\newcommand{\eo}{{ \eta}}                              
\newcommand{\ubfs}{{ \mathrm{BFS}}}          
\newcommand{\lbfs}{{ \mathrm{BFS2}}}         
\newcommand{\elbr}{{ \eps_*}}                        
\newcommand{\compone}{{ C_{J_1}}}             
\newcommand{\comptwo}{{ C_{J_2}}}             
\newcommand{\comp}{{ C_{J}}}                      
\newcommand{\generationarb}[1]{{ \partial \comp({#1})}}  
\newcommand{\compsmooth}{{ C_{J_0}}}                         
\newcommand{\generationsmooth}[1]{{ \partial \compsmooth({#1})}} 
\newcommand{\scomptwo}{{ \left|C_{J_2}\right|}}                        
\newcommand{\boundary}{{ \partial \compone}}                  
\newcommand{\generation}[1]{{ \partial \compone({#1})}} 
\newcommand{\deggen}[2]{{ d_{#2}(#1)}}         
\newcommand{\nbrhdbrconst}[1]{{ r_{#1}}}        
\newcommand{\co}{{ \eta}}                                   
\newcommand{\starttime}[1]{{ i_0(#1)}}              
\newcommand{\smoothtime}[1]{{ i^*(#1)}}        
\newcommand{\smoothend}[1]{{ i_1(#1)}}          
\newcommand{\varstarttime}[1]{{ i^{\dagger}(#1)}} 
\newcommand{\stoptime}{{ i_1}}                         
\newcommand{\jumpsdeg}[3]{{ d_{#2,#3}^*(#1)}}  
\newcommand{\expprob}{{ \exp(-\Theta(n^{\delta/2}))}}
\newcommand{\expprobtwo}{{ \exp(-\Theta(n^{\delta/4}))}}
\newcommand{\smootherror}[1]{{ \gamma_{#1}}}        
\newcommand{\cconst}{{ c_0}}					
\newcommand{\cconstl}[1]{{ c_{#1}}}			
\newcommand{\annconst}{{ c}}                    
\newcommand{\annerror}[1]{{ C_{#1}}}     
\newcommand{\annexcess}{{ (1+\delta)}}   
\newcommand{\annalpha}{{ \xi}}                  
\newcommand{\annea}[1]{{ x_{#1}}}       
\newcommand{\anntime}{{ \tau}}        
\newcommand{\gensize}[1]{{ |\partial({#1})|}} 
\newcommand{\jgensize}[1]{{ |\partial C_J({#1})|}}
\newcommand{\lbrgensize}[1]{{ |\partial_*({#1})|}}
\newcommand{\gen}[1]{{ \partial({#1})}}                
\newcommand{\adistr}{{ \mathcal{Z}}}                   
\newcommand{\lcompsize}{{ \Lambda}}                  
\newcommand{\lcompconst}{{ \lambda}}                
\newcommand{\lcomps}{{ \cL}}                     
\newcommand{\death}{{ \mathcal{D}}}			
\newcommand{\dualbr}{{ \mathcal{T}_{\mathcal{D}}}}				
\newcommand{\sdualbr}{{ \left|\mathcal{T}_{\mathcal{D}}\right|}}				
\newcommand{\pdualbr}{{ p_{\mathcal{D}}}}				
\newcommand{\primaryStop}{{ \mathcal{E}}}				
\newcommand{\primaryWidth}{{ \mathcal{W}}}				
\newcommand{\ubrone}{{ \mathcal{T}^{J_1}}}				
\newcommand{\subrone}{{ \left|\mathcal{T}^{J_1}\right|}}
\newcommand{\ubrtwo}{{ \mathcal{T}^{J_2}}}				
\newcommand{\subrtwo}{{ \left|\mathcal{T}^{J_2}\right|}}	
\newcommand{\falseNeg}{{ \mathcal{F}}}
\begin{document}

\title[The size of the giant component in random hypergraphs]{The size of the giant component in random hypergraphs}
\thanks{The authors are supported by Austrian Science Fund (FWF): P26826\\
The second author is additionally supported by W1230, Doctoral Program ``Discrete Mathematics''.}

\author[O.~Cooley, M.~Kang and C.~Koch]{Oliver Cooley, Mihyun Kang and Christoph Koch}
\email{cooley@math.tugraz.at}
\email{kang@math.tugraz.at}
\email{ckoch@math.tugraz.at}
\address{Institute of Optimization and Discrete Mathematics,\\ Graz University of Technology, 8010 Graz, Austria}

\date{\today}

\begin{abstract}
The phase transition in the size of the giant component in random graphs is one of the most well-studied phenomena in random graph theory. For hypergraphs, there are many possible generalisations of the notion of a component, and for all but the simplest example, the phase transition phenomenon was first proved in~\cite{CoKaPe14}. In this paper we build on this and determine the asymptotic size of the unique giant component.
\end{abstract}

\maketitle
\noindent Keywords: \emph{giant component, phase transition, random hypergraphs, degree, branching process}\\
Mathematics Subject Classification: 05C65, 05C80

\section{Introduction and main results}

Random graph theory was founded by Erd\H{o}s and R\'enyi in a seminal series of papers from 1959-1968. One of the earliest, and perhaps the most important, result concerned the \emph{phase transition} in the size of the largest component - a very small change in the number of edges present in the random graph dramatically alters the size of the largest component giving birth to the \emph{giant component}~\cite{ErdRen60}. Over the years, this result has been refined and improved, and the properties of the largest component at or near the critical threshold are now very well understood.  With modern terminology, and incorporating the strengthenings due to Bollob\'as~\cite{Bollobas84} and {\L}uczak~\cite{Luczak90} we may state the result as follows.

Let $G(n,p)$ denote the random graph on $n$ vertices in which each pair of vertices forms an edge with probability $p$ independently. We consider the asymptotic properties of $G(n,p)$ as $n$ tends to infinity. By the phrase \emph{with high probability} (or \emph{whp}) we mean with probability tending to $1$ as $n$ tends to infinity.

\begin{theorem}[\cite{ErdRen60},\cite{Bollobas84},\cite{Luczak90}]\label{thm:graphcase}
Let $\eps = \eps(n)>0$ satisfy $\eps \rightarrow 0$ and $\eps^3 n \rightarrow \infty$, as $n\to \infty$.
\begin{enumerate}[(a)]
\item If $p=\frac{1-\eps}{n}$ then whp all components of $G(n,p)$ have size at most $O(\eps^{-2}\log (\eps^3n))$.
\item If $p=\frac{1+\eps}{n}$ then whp the size of the largest component of $G(n,p)$ is $(1\pm o(1))2\eps n$ while all other components have size at most $O(\eps^{-2}\log (\eps^3n))$.
\end{enumerate}
\end{theorem}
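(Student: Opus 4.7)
The plan is to prove both parts by coupling the breadth-first exploration of a component in $G(n,p)$ with a Galton-Watson branching process whose offspring law is $\mathrm{Bi}(n-1, p)$, of mean $np = 1 \pm \eps$. For each vertex $v$, running BFS on $C(v)$ reveals that the number of new neighbours at each step is sandwiched between $\mathrm{Bi}(n-1, p)$ above and $\mathrm{Bi}(n - k_0, p)$ below, as long as at most $k_0 \ll n$ vertices have been explored. This brings the classical theory of subcritical and supercritical branching processes to bear on component sizes.

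For part (a), $p = (1-\eps)/n$ gives a subcritical dominating process with mean $1 - \eps$. A standard Chernoff-type bound on the total progeny yields $\mathbb{P}(|C(v)| \geq t) \leq \exp(-c \eps^2 t)$ for some $c > 0$, so taking $t = K \eps^{-2} \log(\eps^3 n)$ with $K$ sufficiently large and union-bounding over the $n$ choices of $v$ gives the claim (using $\eps^3 n \to \infty$). For part (b), $p = (1+\eps)/n$ gives a supercritical process with survival probability $\rho = (2+o(1))\eps$, obtained by expanding the fixed-point equation $1 - \rho = e^{-(1+\eps)\rho}$. Hence $\mathbb{P}(|C(v)| \geq k_0) = (2+o(1))\eps$ for $k_0 := K \eps^{-2} \log(\eps^3 n)$, and a first/second moment computation shows that the total number of vertices lying in ``large'' components concentrates at $(2+o(1))\eps n$ whp.

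The main obstacle in part (b) is to show that all these large components in fact coalesce into a single giant, and that the remaining components still obey the $O(\eps^{-2} \log(\eps^3 n))$ bound. For uniqueness I would use \emph{sprinkling}: write $p = p_1 + p_2$ with $p_2$ a small constant fraction of $p$, expose $G(n, p_1)$ first to produce candidate large components, and observe that any two such components (each of size at least, say, $n^{2/3}$) are joined by some $p_2$-edge with probability $1 - o(n^{-2})$, so a union bound merges them into one giant. A standard discrete \emph{duality} argument then shows that, conditioned on the giant, the complementary graph behaves like a subcritical $G(m, p)$ with effective offspring mean $1 - \eps + o(\eps)$; the part (a) argument applies and yields the small-component bound. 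Combining these pieces finishes the proof.
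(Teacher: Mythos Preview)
Note first that the paper does not itself prove Theorem~\ref{thm:graphcase}: it is quoted as a classical result of Erd\H{o}s--R\'enyi, Bollob\'as and \L uczak. The paper's own contribution is Theorem~\ref{thm:main}, whose proof in the special case $k=2$, $j=1$ recovers Theorem~\ref{thm:graphcase} except that the bound on non-giant components in part~(b) is only $o(\eps n)$, not the sharp $O(\eps^{-2}\log(\eps^3 n))$.

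Where comparable, your outline matches the paper's approach to Theorem~\ref{thm:main} closely. Your part~(a) is exactly the Krivelevich--Sudakov argument the paper uses in Section~4: Chernoff on the edges discovered in an interval of the search process, then a union bound. For part~(b), both you and the paper couple with supercritical and subcritical branching processes, compute $\rho\sim 2\eps$, run a first/second moment argument on the number of vertices in large components, and merge via sprinkling. The paper's second-moment step is more explicit than yours --- it explores from $J_1$ until one of three stopping conditions, deletes what was found, then explores from $J_2$ and bounds the probability of a missed edge between $\partial C_{J_1}$ and $C_{J_2}$ --- but for $j=1$ the smooth boundary lemma is vacuous and this reduces to a routine calculation, so the arguments are essentially the same (indeed the paper says its supercritical argument follows Bollob\'as--Riordan, which is the approach you sketch).

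The one place you go beyond the paper is the discrete duality step to obtain the sharp small-component bound in part~(b); the paper does not do this and settles for $o(\eps n)$. Your duality sketch is the right idea but is the least developed part of the proposal: you would need to argue carefully that, after removing the giant, the exploration from any remaining vertex is dominated by a branching process of mean $1-\eps+o(\eps)$, which requires some control on how many edges each vertex sends into the giant. Also, the sprinkling paragraph has a small mismatch: you define ``large'' as $\ge k_0=K\eps^{-2}\log(\eps^3 n)$ but then sprinkle between components of size $\ge n^{2/3}$ without explaining why every large component meets that higher threshold.
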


Considerably more than this is by now known, but in this paper our focus is on a generalisation of this result. While this graph result (and much more) has been known for several decades, for hypergraphs relatively little was known until recently.

For some integer $k\ge 2$ a $k$-uniform hypergraph consists of a set $V$ of vertices and a set $E\subset \binom{V}{k}$ of edges, each of which consists of $k$ vertices. (The case $k=2$ corresponds to a graph.) We consider the natural analogue of the $G(n,p)$ model:
Let $\cH^k(n,p)$ be a random $k$-uniform hypergraph with vertex set $V=[n]$ in which each $k$-tuple of vertices is an edge with probability $p$ independently.

Before we can state a generalisation of Theorem~\ref{thm:graphcase} we need to know what we mean by a component in hypergraphs. For $k\ge 3$ there is more than one possible definition.

Two $j$-sets of vertices (i.e. $j$-element subsets of the vertex set $V$) $J_1,J_2$ are said to be \emph{$j$-connected} if there is a sequence $E_1,\ldots,E_\ell$ of (hyper-)edges such that $J_1 \subseteq E_1$, $J_2\subseteq E_\ell$ and $|E_i \cap E_{i+1}| \ge j$ for each $i=1,\ldots,\ell-1$. This forms an equivalence relation on $j$-sets. A \emph{$j$-component} is an equivalence class of this relation, i.e. a maximal set of pairwise $j$-connected $j$-sets.

The case $j=1$ is sometimes known as vertex-connectivity. This case is by far the most studied, not necessarily because it is a more natural definition, but because it is usually substantially easier to understand and analyse.

In~\cite{CoKaPe14}, it was determined that for all $1\le j \le k-1$, the phase transition for the largest $j$-component in random hypergraphs occurs at the critical probability threshold of
$$
p_0=p_0(k,j) := \tfrac{1}{\binom{k}{j}-1}\tfrac{1}{\binom{n}{k-j}}.
$$
However, in that paper the size of the largest $j$-component after the phase transition was not determined. In this paper we extend that result to prove its asymptotic size, and also show that it is unique in the sense that the size of the second-largest $j$-component is of smaller order. Once we know that it is unique, we refer to it as the \emph{giant component}.

\begin{thm}\label{thm:main}
Let $1 \le j \le k-1$ and let $\eps=\eps(n)>0$ satisfy $\eps\rightarrow 0$, $\eps^3 n^j\rightarrow \infty$ and $\eps^2 n^{1-2\delta} \rightarrow \infty$, as $n\to\infty$, for some constant $\delta >0$.
\begin{enumerate}[(a)]
\item \label{thm:main:subcrit} If $p=(1-\eps)p_0$ then whp all $j$-components of $\cH^k(n,p)$ have size at most $O(\eps^{-2}\log n)$.
\item \label{thm:main:supercrit} If $p=(1+\eps)p_0$ then whp the size of the largest $j$-component of $\cH^k(n,p)$ is $(1\pm o(1))\frac{2\eps}{\binom{k}{j}-1}\binom{n}{j}$ while all other $j$-components have size at most $o(\eps n^j)$.
\end{enumerate}
\end{thm}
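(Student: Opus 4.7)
The plan is to analyse the $j$-component containing a fixed $j$-set $J_0$ via a breadth-first search (BFS) coupled with a Galton--Watson branching process. At each step of the BFS, a $j$-set $J$ is popped from the queue, every not-yet-revealed $k$-edge containing $J$ is exposed, and each such edge contributes $\binom{k}{j}-1$ new $j$-subsets to the queue. Writing $c := \binom{k}{j}-1$, so long as the BFS has touched only $o(n^j)$ $j$-sets, the number of new edges at $J$ is $\Bi(\binom{n-j}{k-j}(1-o(1)), p)$, approximated by $\mathrm{Po}(\mu)$ with $c\mu = (1\pm\eps)(1+o(1))$. The idealised offspring distribution is thus $c\cdot\mathrm{Po}(\mu)$. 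A second-order expansion of the fixed-point equation $q = \exp(\mu(q^c-1))$ yields a survival probability $\rho = (1+o(1))\tfrac{2\eps}{c}$, which exactly matches the constant in (b).

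For part (a), the BFS is dominated by a subcritical branching process of mean at most $1-\eps/2$, valid as long as the BFS has used $o(n^j)$ $j$-sets. Standard exponential tail bounds on the total progeny of a subcritical Galton--Watson tree give $\Pr(|T|\ge t)\le\exp(-c'\eps^2 t)$, and setting $t=C\eps^{-2}\log n$ and taking a union bound over the $\binom{n}{j}$ possible starting $j$-sets completes the bound. Since such a component cannot exhaust the pool of $j$-sets, the domination is self-consistent throughout.

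For part (b), I would combine an upper-branching-process bound with a two-round sprinkling argument. For the upper bound on the giant, couple the BFS with a branching process of mean $1+\eps(1+o(1))$; the survival probability calculation together with a second-moment argument on pairs of disjoint exploration trees shows that the total number of $j$-sets in components of size at least $\eps^{-2}\log n$ is whp at most $(1+o(1))\rho\binom{n}{j}$. For the matching lower bound and for uniqueness, expose edges first at density $(1+\eps/2)p_0$: by a \emph{lower} coupling, the probability that a fixed $j$-set lies in a component of size at least $\eps^{-2}\log n$ is $(1-o(1))\tfrac{\eps}{c}$, and concentration yields $\Omega(\eps n^j)$ such $j$-sets whp. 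Then sprinkle the remaining edges at residual density $\approx\tfrac{\eps}{2}p_0$: a straightforward expectation computation shows that any two large components get merged by a sprinkling edge whp, so every surviving non-giant component must have size $o(\eps n^j)$, and the final sprinkling simultaneously absorbs the remaining supercritical mass to bring the giant up to $(1-o(1))\tfrac{2\eps}{c}\binom{n}{j}$.

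The main obstacle, specific to the hypergraph setting, is that revealing one edge at a $j$-set produces $c=\binom{k}{j}-1$ highly correlated new $j$-subsets, each of which may overlap nontrivially (on between $j+1$ and $k-1$ vertices) with previously explored $j$-sets. Keeping the coupling to $c\cdot\mathrm{Po}(\mu)$ accurate within a $(1+o(1))$ factor uniformly as $\eps\to 0$, all the way up to component sizes of order $\eps n^j$, is the quantitative heart of the argument: it demands careful bookkeeping of both used $j$-sets and touched vertices, and is essentially what makes the hypothesis $\eps^2 n^{1-2\delta}\to\infty$ necessary beyond the naive $\eps^3 n^j\to\infty$.
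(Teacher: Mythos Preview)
Your overall architecture---BFS coupled to branching processes, survival probability $\sim 2\eps/c$, second moment on pairs, sprinkling for uniqueness---matches the paper's, and your treatment of part~(a) is essentially equivalent to theirs. You also correctly locate the main difficulty: controlling overlaps between previously discovered $j$-sets and newly revealed ones when $j\ge 2$.

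However, the proposal has a genuine gap precisely at this point. In the second-moment step, after exploring a partial component $C_{J_1}$ and deleting it, you must start a fresh exploration from $J_2$ and argue that the deletion almost never causes a ``false negative'' (i.e.\ $C_{J_2}$ looks small in the restricted hypergraph although $J_2$'s true component is large). This reduces to bounding the number of $k$-sets meeting both $\partial C_{J_1}$ and $C_{J_2}$. The issue is that the number of $k$-sets containing two given $j$-sets jumps by a factor of $n$ each time their intersection grows by one, so a naive count of pairs is useless; you need to know, for every $\ell$-set $L$ with $0\le\ell\le j-1$, that $L$ is contained in about $|\partial C_{J_1}|\binom{n}{j-\ell}/\binom{n}{j}$ $j$-sets of the boundary. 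The paper establishes exactly this via its \emph{smooth boundary lemma}, proved by an inductive analysis that separates ``jumps'' to $L$ (concentrated by Chernoff once the boundary exceeds $n^{\ell+\delta}$) from ``neighbourhood branchings'' at $L$ (which contract by a factor $r_\ell=(\binom{k-\ell}{j-\ell}-1)/c<1$, so any initial non-smoothness dies in $O(\log n)$ generations). This is the paper's main technical contribution, and it is what forces the extra hypothesis $\eps^2 n^{1-2\delta}\to\infty$: the degree of a $(j-1)$-set in a boundary of the relevant size is $\Theta(\lambda^2 n)$ with $\lambda\ll\eps$, and concentration requires this to be large. Your proposal names the obstacle but offers no mechanism to control these $\ell$-degrees; ``careful bookkeeping'' is not a substitute for the smoothing lemma.

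A secondary issue: your two-round sprinkling for the lower bound does not give the right constant. Exposing at $(1+\eps/2)p_0$ yields survival probability $\sim\eps/c$, and sprinkling an extra $\tfrac{\eps}{2}p_0$ merges large components but does not obviously raise the giant's size from $\sim\tfrac{\eps}{c}\binom{n}{j}$ to $\sim\tfrac{2\eps}{c}\binom{n}{j}$. The paper instead runs the full second-moment argument at $p=(1+\eps)p_0$ to get $X\sim\tfrac{2\eps}{c}\binom{n}{j}$ directly, and uses only a tiny sprinkle $p_2=(\log n)^2/(\lambda^2 n^{k-j+1})\ll\eps p_0$ purely for uniqueness.
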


While preparing this paper, we learnt that independently Lu and Peng~\cite{LP14} have also claimed to have a proof of a similar result, although for the range when $\eps$ is bounded from below by a constant (i.e.\ $p$ is much further from the critical value).

We note that when $j=1$ the conditions on $\eps$ are simply $n^{-1/3}\ll \eps \ll 1$. This provides the best possible range for $\eps$. However, for larger $j$, the third condition takes over from the second and we require $n^{-1/2+\delta}\ll \eps \ll 1$. This condition arises from our proof method. We discuss the critical window in more detail in Section~\ref{sec:conclusion}.

The case $k=2$ (then automatically $j=1)$ is simply Theorem~\ref{thm:graphcase}. The case $j=1$ for general $k$ was already proved by Schmidt-Pruzan and Shamir~\cite{SPS85} and indeed much more is known for that case, as will also be described in Section~\ref{sec:conclusion}.

Our proof works for all $j,k\in \Nat$ with $1\le j <k$ (i.e. all permissible pairs $j,k$). Indeed, for $j=1$ this paper provides a new, short proof of the result. For general $j$, in the \emph{supercritical regime} (when $p=(1+\eps)p_0$), we need an additional lemma (smooth boundary lemma, Lemma~\ref{lem:uniformboundary}), which is the main original contribution of this paper (the rest of the argument is in essence very similar to a recent proof of the graph case by Bollob\'as and Riordan~\cite{BR12b}, though slightly more complex in full generality). We will prove Theorem~\ref{thm:main} using a breadth-first search algorithm to explore the component containing an initial $j$-set. We refer to the collection of $j$-sets at fixed distance from the initial $j$-set as a \emph{generation}. Roughly speaking, the smooth boundary lemma says that during the (supercritical) search process, most generations are ``smooth'' in the sense that any set $L$ of size at most $j-1$ lies in approximately the ``right'' number of $j$-sets of the generation.

In order to state the lemma, we need a few additional definitions. For this introduction we will be deliberately vague about these definitions and the statement of the smooth boundary lemma, which appears in this section as Lemma~\ref{lem:crudeuniformboundary}. All definitions and exact form of the smooth boundary lemma (Lemma~\ref{lem:uniformboundary}) will be restated more explicitly in Section~\ref{sec:supercrit}.

For any given $j$-set $J$ we explore its component $\comp$ via a breadth-first search algorithm. Let $\generationarb{i}$ denote the $i$-th generation of this process, which we sometimes refer to as the \emph{boundary} of the component after $i$ generations. For any $0<\ell<j$ let $\starttime{\ell}$ be the first generation $i$ for which $\generationarb{i}$ is significantly larger than $n^\ell$ and for an $\ell$-set $L$ let $d_L(\generationarb{i})$ be the number of $j$-sets of $\generationarb{i}$ that contain $L$.

Let $\stoptime$ denote the generation at which the search process hits one of three stopping conditions, which will be stated explicitly later (see Section~\ref{sec:supercrit:secondmoment}).

\begin{lemma}[Smooth boundary lemma -- simplified form]\label{lem:crudeuniformboundary}
Let $\eps,p$ be as in Theorem~\ref{thm:main}~\eqref{thm:main:supercrit}. Then with probability at least $1-\exp (-n^{\Theta(1)})$, for all $J,\ell,L,i$ such that
\begin{itemize}
\item $J$ is a $j$-set of vertices;
\item $0\le \ell \le j-1$;
\item $L$ is an $\ell$-set of vertices;
\item $\starttime{\ell}+\Theta(\log n) \le i \le \stoptime$
\end{itemize}
the following holds:
$$
d_L(\generationarb{i}) = (1\pm o(1))\frac{|\generationarb{i}|}{\binom{n}{j}}\binom{n}{j-\ell}.
$$
\end{lemma}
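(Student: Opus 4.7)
I prove the lemma by induction on the generation index $i$, using the edge-independence of $\cH^k(n,p)$ to apply a Chernoff bound at each step and taking a union bound over all quadruples $(J,\ell,L,i)$.

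\noindent\textbf{Trivial case and setup.} For $\ell=0$ the set $L=\emptyset$ lies in every $j$-set, so both sides equal $|\generationarb{i}|$ and the claim is trivial. Fix therefore $\ell\in\{1,\dots,j-1\}$ and an $\ell$-set $L$, and condition on the BFS-filtration up to and including generation $i$. Because edges in $\cH^k(n,p)$ are independent, $\generationarb{i+1}$ is determined by an independent family of Bernoulli trials, one per pair $(J',S)$ with $J'\in\generationarb{i}$ and $S$ a $(k-j)$-subset of vertices not yet reached by the exploration.

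\noindent\textbf{Conditional mean.} If the edge $J'\cup S$ is present, then each $j$-subset $J''\subset J'\cup S$ with $J''\ne J'$ not discovered earlier contributes to $\generationarb{i+1}$; its contribution to $d_L(\generationarb{i+1})$ is the number of such $j$-subsets containing $L$, a quantity depending only on $\ell'':=|L\cap J'|$ and on whether $L\setminus J'\subset S$. Summing over pairs $(J',S)$ and invoking the inductive smoothness of $\generationarb{i}$ applied to all $\ell''$-sets (with $\ell''\le\ell$), a direct combinatorial calculation gives
$$
\EE\bigl[d_L(\generationarb{i+1})\mid\generationarb{i}\bigr] \;=\; (1\pm o(1))\,\frac{|\generationarb{i+1}|}{\binom{n}{j}}\binom{n}{j-\ell},
$$
where I also use concentration of $|\generationarb{i+1}|$ itself (the $\ell=0$ case of the same one-step argument).

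\noindent\textbf{One-step Chernoff and propagation.} Conditional on $\generationarb{i}$, the variable $d_L(\generationarb{i+1})$ is a sum of independent $O(1)$-bounded indicators, so the standard Chernoff bound yields, with probability at least $1-\expprobtwo$,
$$
d_L(\generationarb{i+1}) = (1\pm n^{-\Omega(\delta)})\,\EE\bigl[d_L(\generationarb{i+1})\mid\generationarb{i}\bigr],
$$
provided the conditional mean is at least $n^{\Omega(1)}$. For $i\ge\starttime{\ell}+C\log n$ with $C=C(k,j,\delta)$ sufficiently large, geometric growth of generations by the factor $(1+\eps)(\binom{k}{j}-1)>1$ ensures $|\generationarb{i+1}|\gg n^{\ell+\Omega(1)}$ and hence the required lower bound on the mean. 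Propagating this forward from the base case and taking a union bound over the at most $n^j$ choices of $J$, the $j$ values of $\ell$, the at most $n^{j-1}$ choices of $L$ and the at most $\stoptime\le n^{O(1)}$ values of $i$ yields total failure probability $\exp(-n^{\Theta(1)})$. Keeping the per-step multiplicative error at $n^{-\Omega(\delta)}$ (rather than merely $o(1)$) ensures that even after propagating through all $\stoptime$ generations the accumulated error is still $1\pm o(1)$.

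\noindent\textbf{Main obstacle.} The delicate point is the base case at $i=\starttime{\ell}+C\log n$. At $\starttime{\ell}$ itself the generation has size only $\Theta(n^\ell)$ and its $d_L$-profile is not a priori smooth. The key observation is that during the subsequent $C\log n$ generations the fresh $(k-j)$-subsets $S$ are drawn from a pool of $n-o(n)$ untouched vertices, so the probability that $L\setminus J'\subset S$ is essentially $\binom{n-|J'|}{k-j-|L\setminus J'|}/\binom{n-|J'|}{k-j}$, depending only on $|L\setminus J'|$ and not on local history. This uniformising effect combined with geometric amplification damps any initial bias to relative error $n^{-\Omega(\delta)}$ within $C\log n$ rounds. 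Making this damping quantitative, and verifying consistency between the growth-factor constant and the parameter $\delta$ appearing in the hypothesis $\eps^2 n^{1-2\delta}\to\infty$, is the main technical work of the proof.
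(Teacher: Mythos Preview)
Your overall architecture---one-step concentration via Chernoff, propagation, then a union bound over $(J,\ell,L,i)$---is the same as the paper's, and your identification of the base case at $i_0(\ell)+\Theta(\log n)$ as the crux is correct. But the mechanism you invoke for that base case is wrong, and this is a genuine gap rather than a detail to be filled in.

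Concretely: the generation-to-generation growth factor is $(1+\eps)$, not $(1+\eps)\bigl(\binom{k}{j}-1\bigr)$; the factor $\binom{k}{j}-1$ is already absorbed into $p_0$. Since $\eps\to 0$, over $C\log n$ generations the boundary grows only by a factor $(1+\eps)^{C\log n}=n^{O(\eps)}=n^{o(1)}$, so ``geometric amplification'' contributes nothing on the relevant time scale. The damping the paper actually uses comes from a structural decomposition you have not made. Split the contribution to $d_L(\gen{i+1})$ into \emph{jumps} (edges from $J'\in\gen{i}$ with $L\not\subset J'$) and \emph{neighbourhood branchings} (edges from $J'\in\gen{i}$ with $L\subset J'$). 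The jump contribution depends only on $d_{L'}(\gen{i})$ for $L'\subsetneq L$, hence is smooth by induction on $\ell$ (not on $i$). The branching contribution is, up to concentration, $r_\ell\cdot d_L(\gen{i})$ with $r_\ell:=\bigl(\binom{k-\ell}{j-\ell}-1\bigr)\big/\bigl(\binom{k}{j}-1\bigr)<1$ for $\ell\ge 1$. Iterating, the dependence on the uncontrolled initial value $d_L(\gen{i_0(\ell)})$ decays like $r_\ell^{\,s}$ and is at most $1$ after $s=\Theta(\log n)$ steps, leaving only the smooth jump terms. Your one-step mean computation, as written, already assumes $d_L(\gen{i})$ is smooth (otherwise the branching term perturbs the mean away from the target), so your induction on $i$ is circular without an independent argument for the base case; and the argument you sketch for the base case relies on the wrong mechanism. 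The fix is to induct on $\ell$, carry out the jumps/branchings split, and exploit the contraction factor $r_\ell$.
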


Lemma~\ref{lem:crudeuniformboundary} (or its more explicit form, Lemma~\ref{lem:uniformboundary}) is an interesting result in itself, giving strong information about the structure of reasonably large components. We believe that it will prove to be a useful tool applicable to many other results in the field of random hypergraphs.

\section{Overview and proof methods}\label{sec:overview}

In this section we will be informal about quantifications and use terms like ``large'' and ``many'' without properly defining them. The rigorous definitions can be found in the actual proofs.

\subsection{Intuition: Branching processes}

As is the case for graphs, there is a very simple heuristic argument which suggests at what threshold we expect the phase transition to occur and how large we expect the largest component to be. We consider exploring a $j$-component of $\cH^k(n,p)$ via a breadth-first process: We begin with one $j$-set, then find all edges containing that $j$-set, thus ``discovering'' any further $j$-sets that they contain, then from each of these new $j$-sets in turn we look for any more edges containing them and so on.

The first $j$-set we consider is contained in $\binom{n-j}{k-j}$ $k$-sets, all of which could potentially be edges. Later on when considering an active $j$-set we may already have queried some of the $k$-sets containing it. However, early in the process, $\binom{n}{k-j}$ is a good approximation (and certainly an upper bound) for the number of queries which we make from any $j$-set. Each of these queries results in an edge with probability $p$, and for each edge we generally discover $\binom{k}{j}-1$ new $j$-sets (it could be fewer if some of these $j$-sets were already discovered, but intuitively this should not happen often).

We may therefore approximate the search process by a branching process $\ubr$: Here we represent $j$-sets by individuals and for each individual the number of its children is given by a random variable with distribution $\Bi\big(\binom{n}{k-j},p\big)$ multiplied by $\binom{k}{j}-1 $.

The expected number of children of each individual is $\big(\binom{k}{j}-1\big)\binom{n}{k-j}p$. If this number is smaller than $1$, then the process always has a tendency to shrink and will therefore die out with probability $1$. This roughly corresponds to the initial $j$-set being in a small component. On the other hand, if the expected number of children is larger than $1$, the process has a tendency to grow and therefore has a certain positive probability of surviving indefinitely, corresponding to the initial $j$-set being in a large component.

The expected number of children is exactly $1$ when $p=p_0$, which is why we expect the threshold to be located there. Furthermore, for $p=(1+\eps)p_0$ the survival probability $\varrho$ is asymptotically $\frac{2\eps}{\binom{k}{j}-1}$ (this is calculated in Appendix~\ref{app:branchings}). This tells us that we should expect about $\varrho \binom{n}{j}$ of the $j$-sets of $\cH^k(n,p)$ to be contained in large components. Moreover, large components should intuitively merge quickly and thus form a unique giant component.

\subsection{Proof outline}
As is often the case in such theorems, one half of Theorem~\ref{thm:main} is easy to prove. Specifically, part~\eqref{thm:main:subcrit} can be quickly proved using the ideas implemented by Krivelevich and Sudakov~\cite{KrivSud13} for the graph case.

We now give an outline of the proof of Theorem~\ref{thm:main}~\eqref{thm:main:supercrit}. Much of this is similar to the argument for graphs given by Bollob\'as and Riordan~\cite{BR12b} -- we will highlight the points at which new ideas are required. From now on, whenever we refer to a ``component'' we mean a $j$-component.

The main difficulty is to calculate the number $X$ of $j$-sets in large components; if we can prove that $X$ is well-concentrated, then we can show that almost all of these $j$-sets lie in one large component using a standard sprinkling argument.

The first issue that we encounter is, if we find an edge, how many new $j$-sets do we discover? It could be as many as $\binom{k}{j}-1$, but on the other hand some of these $j$-sets may already have been discovered, so perhaps only one of these is genuinely new. (Any $k$-set which would give no new $j$-set should not be queried at all.) This is important for two reasons: Firstly, it is important for the survival probability of the branching process approximation; and secondly, it may have a significant impact on the size of the component we discover.

Early on in the breadth-first process (since a very small proportion of $j$-sets have already been discovered) we should discover $\binom{k}{j}-1$ new $j$-sets for almost every edge. For an upper coupling, we will simply assume that we discover $\binom{k}{j}-1$ new $j$-sets for each edge, and couple this process with the branching process $\ubr$. In order to construct a lower coupling we have to be a bit more careful; we only query a $k$-set if it contains $\binom{k}{j}-1$ previously undiscovered $j$-sets, thus ensuring that we can define a lower coupling with a branching process $\lbr$ which has a structure similar to $\ubr$. The process $\lbr$ will be formally introduced in Section~\ref{sec:prelim}. It follows from the bounded degree lemma in~\cite{CoKaPe14} (restated in this paper as Lemma~\ref{lem:crudemaxdeg}) that these two couplings have essentially the same behaviour. For this overview we therefore consider only $\ubr$.

The probability that a $j$-set lies in a large component, and therefore contributes to $X$, is approximately the survival probability of the branching process $\ubr$, which as we will see is approximately $\tfrac{2\eps}{\binom{k}{j}-1}$. Thus we have $\EE (X)\sim \tfrac{2\eps}{\binom{k}{j}-1}\binom{n}{j}$. For the second moment we need to consider the probability that two distinct $j$-sets are both in large components. As before we grow a (partial) component $\compone$ from a $j$-set $J_1$ until one of the following three stopping conditions is reached.

\begin{enumerate}[(i)]
\item The component is fully explored;
\item We have discovered ``many'' $j$-sets;
\item We have ``fairly many'' $j$-sets that are currently active (i.e. are in the boundary $\boundary$).
\end{enumerate}

Since we are interested in the probability that both $j$-sets lie in large components we may assume that we do not stop due to condition (i). Next, note that, if stopping condition (iii) is applied, then with high probability $J_1$ lies in a large component. This is not hard to prove using the branching process approximation - if we have fairly many active individuals, then it is highly probable that the branching process will survive. Hence stopping conditions (ii) or (iii) are essentially only applied if the component of $J_1$ is large.  This happens with probability roughly $\frac{2\eps}{\binom{k}{j}-1}$.

We then delete all of the $j$-sets contained in $\compone$ from $\cH^k(n,p)$ and begin growing a component $\comptwo$ from the second $j$-set $J_2$ (assuming $J_2$ itself has not been deleted), where any $k$-set containing a deleted $j$-set can now no longer be queried. Deleting these $j$-sets ensures that the new search process is independent of the old process, albeit in a restricted hypergraph. Furthermore, it still follows from the bounded degree lemma that $\lbr$ will be a lower coupling. Once again, we stop the process if $\comptwo$ is fully explored or it becomes large, and the probability that it becomes large is approximately $\tfrac{2\eps}{\binom{k}{j}-1}$.

However, since we deleted some $j$-sets, it might happen that the search process for $J_2$ stays small even though the component of $J_2$ in $H^k(n,p)$ is large.  In which case there is an edge in $H^k(n,p)$ containing a $j$-set of $\compone$ which was active when we deleted it and a $j$-set of $\comptwo$, i.e. an edge containing $j$-sets from both $\boundary$ and $\comptwo$. We would like to show that the expected number of such edges is $o(1)$, or equivalently, that the number of $k$-sets containing two $j$-sets as above is $o(1/p)$, and thus with high probability no such edge exists.

This is the point at which the proof requires new ideas not needed in the graph case. For it is not enough simply to count the number of pairs of $j$-sets, one from $\boundary$ and one from $\comptwo$, for the following reason: Given two $j$-sets, how many $k$-sets contain both of them? The answer is heavily dependent on the size of their intersection. Increasing the size of the intersection by just one may lead to an additional factor of $n$ in the number of such $k$-sets.

We therefore need to know that $\comptwo$ and $\boundary$ behave approximately as expected with respect to the size of intersections of $j$-sets chosen one from each. For this we prove the \emph{smooth boundary lemma} (Lemma~\ref{lem:crudeuniformboundary}, or rather its more precise form Lemma~\ref{lem:uniformboundary}). It states that, with (exponentially) high probability, every set $L$ of size $1\le \ell< j$ lies in approximately the ``right'' number of $j$-sets of $\boundary$.

This enables us to complete the proof by considering, for each $j$-set $J$ of $\comptwo$, the number of $j$-sets of $\boundary$ which intersect $J$ in some subset $L$, and thus calculate the total number of $k$-sets which we have not queried because of deletions. This shows that with high probability we have not missed any edges from $\comptwo$, and thus the probability that $J_1$ and $J_2$ both lie in large components is approximately $4\eps^2/\big(\binom{k}{j}-1\big)^2$, which multiplied by the number of pairs $(J_1,J_2)$ shows that the second moment is small enough to apply Chebyshev's inequality and deduce that $X$ is concentrated around its expectation.

Note that Lemma~\ref{lem:crudeuniformboundary} (respectively Lemma~\ref{lem:uniformboundary}) is trivial in the case $j=1$. In this case the proof of the main result therefore becomes substantially shorter. This suggests a concrete mathematical reason why the case of vertex-connectivity is genuinely easier than the more general case and not simply easier to visualise.

\section{Notation and setup}\label{sec:prelim}

We fix $j,k\in\Nat$ satisfying $1\le j< k$ for the remainder of the paper. Throughout the paper we omit floors and ceilings when they do not significantly affect the argument. We use $\log$ to denote the natural logarithm (i.e. base $e$).

Given a hypergraph $\cH$ and a set $L$ of vertices of $\cH$, the \emph{degree} of $L$ in $\cH$, denoted $d_L (\cH)$, is the number of edges of $\cH$ which contain $L$. For a natural number $\ell$, the \emph{maximum $\ell$-degree} of $\cH$, denoted $\Delta_\ell (\cH)$, is the maximum of $d_L(\cH)$ over all sets $L$ of $\ell$ vertices of $\cH$. When $\ell=0$, this is simply the number of edges of $\cH$.

All asymptotics in the paper will be as $n\rightarrow \infty$. In particular, we use the phrase \emph{with high probability}, or \emph{whp}, to mean with probability tending to $1$ as $n \rightarrow \infty$. We also use the notation $f(n) \sim g(n)$ to mean $f(n)= (1\pm o(1))g(n)$. By the notation $x\ll y$ we mean that $x=o(y).$

Given two random variables $X,Y$, we say that \emph{$X$ is stochastically dominated by $Y$} if $\Pr(X\ge z)\le \Pr(Y\ge z)$ for all $z\in \mathbb{R}$.

As may be apparent from the introduction, the constant $\binom{k}{j}-1$ will be an important one, appearing many times during the proof. For convenience, we define
\[
\cconst:=\binom{k}{j}-1.
\]
 We fix a constant $\delta$ satisfying $0 < \delta < 1/6$, and think of it as an arbitrarily small constant -- in general our results become stronger for smaller $\delta$.

We will have various further parameters throughout the paper satisfying the following hierarchies:
\[
n^{-j/3},n^{-1/2+\delta} \ll \lcompconst \ll \elbr \ll \eps \ll 1
\]
and
\[
\elbr, n^{-\delta/24} \ll \eo \ll \smootherror{0}/(\log n) \ll 1/\log n.
\]
We further define $\smootherror{\ell}=8^{\ell} \smootherror{0}$ for $\ell=1,\ldots,j-1$.

Note in particular that for any $\eps$ satisfying the conditions of Theorem~\ref{thm:main} we can choose the remaining parameters such that this hierarchy is satisfied.

We will explore $j$-components in $\cH^k(n,p)$ via a breadth-first search algorithm, i.e. we begin with a $j$-set and query all $k$-sets which contain it to determine whether they form edges. For any that do, the further $j$-sets they contain are neighbours of our starting $j$-set, and for each of these in turn we query $k$-sets containing them to discover whether they form an edge, and so on.

During this process we denote a $j$-set as:
\begin{itemize}
\item  \textbf{neutral} if it has not yet been visited by the search process;
\item  \textbf{active} if it has been visited, but not yet fully queried;
\item  \textbf{explored} if it has been fully queried.
\end{itemize}
We refer to \emph{discovered} $j$-sets to mean $j$-sets that are either active or explored, but not neutral.
We will consider a standard breadth-first search algorithm:
\begin{itemize}
\item $\ubfs(J)$: From an initial $j$-set $J$ we query any previously unqueried $k$-set containing it which also contains a still-neutral $j$-set. If the set of active $j$-sets becomes empty we terminate the algorithm.
\end{itemize}
If the context clarifies the initially active $j$-set we usually write $\ubfs$ instead of $\ubfs(J)$. 

We will want to approximate this search processes by branching processes:

\begin{itemize}
\item $\ubr$ is a branching process in which each individual (which represents a $j$-set) has $\cconst \cdot\Bi\big(\binom{n}{k-j},p\big)$ children independently.
\item $\lbr$ is a branching process in which each individual (which represents a $j$-set) has $\cconst \cdot\Bi\big((1-\elbr)\binom{n}{k-j},p\big)$ children independently.
\end{itemize}
By the notation $c\cdot X$, for a constant $c$ and a random variable $X$, we mean a random variable $Y$ with distribution given by $\Pr (Y = ci) = \Pr (X=i)$, for any $i$. (Alternatively $c\cdot X$ may be considered as consisting of $c$ \emph{identical} copies of $X$ -- note that it does \emph{not} consist of $c$ \emph{independent} copies of $X$.)

It is immediately clear that $\ubr$ forms an upper coupling for $\ubfs(J)$. That $\lbr$ is a lower coupling (whp) is less obvious, but will be proved later (see Lemma~\ref{lem:approximations}). 

\begin{remark}
It is for this reason that we need $\elbr \ll \eps$ -- then in the lower coupling $\lbr$ the expected number of children is still approximately $1+\eps$, i.e. the lower coupling and upper coupling are still very similar.
\end{remark}

At several points in the argument we will use the following form of the Chernoff bound for sums of indicator random variables (see e.g.~\cite[Theorem~2.1]{JLRbook}).
\begin{theorem}\label{thm:chernoff}
Let $X$ be the sum of finitely many i.i.d.\ Bernoulli random variables. Then for any $\zeta \ge 0$,
\begin{align}
   \Pr\left[X\ge \EE(X)+\zeta\right] & \le \exp\left(-\frac{\zeta^2}{2\left(\EE(X)+\zeta/3\right)}\right) \label{eq:upperchernoff}\\
   \Pr\left[X\le \EE(X)-\zeta\right] & \le \exp\left(-\frac{\zeta^2}{2\, \EE(X)}\right).\label{eq:lowerchernoff}
\end{align}
\end{theorem}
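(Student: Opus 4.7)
The plan is to deduce both inequalities from the standard Chernoff--Cram\'er method based on the moment generating function, and then massage the resulting estimate into the precise form stated. Writing $X=\sum_{i=1}^N X_i$ with $X_i$ independent Bernoullis with means $p_i$, set $\mu:=\EE(X)=\sum_i p_i$. For any $t>0$, Markov's inequality applied to $e^{tX}$ gives
\[
\Pr[X\ge \mu+\zeta]\le e^{-t(\mu+\zeta)}\,\EE\bigl[e^{tX}\bigr]
= e^{-t(\mu+\zeta)}\prod_{i=1}^N \EE\bigl[e^{tX_i}\bigr],
\]
using independence. Since $\EE[e^{tX_i}] = 1+p_i(e^t-1)\le \exp\bigl(p_i(e^t-1)\bigr)$ for each $i$, multiplying yields $\EE[e^{tX}]\le \exp\bigl(\mu(e^t-1)\bigr)$.

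Optimising the exponent $-t(\mu+\zeta)+\mu(e^t-1)$ in $t$ at $t=\log(1+\zeta/\mu)$ produces the classical tail bound
\[
\Pr[X\ge \mu+\zeta]\le \exp\bigl(-\mu\, h(\zeta/\mu)\bigr),\qquad h(x):=(1+x)\log(1+x)-x.
\]
To obtain \eqref{eq:upperchernoff} it then suffices to verify the elementary calculus inequality $h(x)\ge \frac{x^2}{2(1+x/3)}$ for all $x\ge 0$; this is a routine comparison (the two sides and their first two derivatives agree at $x=0$, after which a sign check on the third derivative finishes the job). Substituting $x=\zeta/\mu$ and clearing denominators gives the stated $\exp\bigl(-\zeta^2/(2(\mu+\zeta/3))\bigr)$.

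For the lower tail \eqref{eq:lowerchernoff} we repeat the argument with $t<0$, which by the same MGF computation produces
\[
\Pr[X\le \mu-\zeta]\le \exp\bigl(-\mu\, h(-\zeta/\mu)\bigr)
\]
for $0\le \zeta\le \mu$ (and the case $\zeta>\mu$ is trivial since the event is empty). Here it is cleaner to invoke the sharper bound $h(-x)\ge x^2/2$ on $[0,1]$, which again is a short calculus exercise (write $g(x):=h(-x)-x^2/2$, note $g(0)=g'(0)=0$ and $g''(x)\ge 0$ on $[0,1)$). This yields exactly $\exp\bigl(-\zeta^2/(2\mu)\bigr)$.

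The only mildly delicate point is keeping track of the exact constants in the rational approximations to $h$; apart from that the proof is entirely standard, and indeed one could shortcut the whole argument by citing the Bernstein-type inequality for bounded independent variables in \cite{JLRbook} directly.
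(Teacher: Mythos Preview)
Your proof is correct and is the standard Chernoff--Cram\'er argument. However, there is nothing to compare here: the paper does not prove this statement at all, but simply quotes it with a reference to \cite[Theorem~2.1]{JLRbook}. So your proposal goes strictly beyond what the paper does; the only ``difference'' is that you supply a proof where the paper is content to cite one.
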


\section{Subcritical phase}

In this section we prove Theorem~\ref{thm:main}\eqref{thm:main:subcrit}. The proof idea here is the same as that of the subcritical graph case as proved by Krivelevich and Sudakov~\cite{KrivSud13}. We observe that a component of size $m$ must have at least $m/\cconst$ edges, which were found within an interval of length at most $m\binom{n}{k-j}$ of the search process. Let us consider the probability that an interval of this length contains so many edges.
\begin{align*}
\Pr \left(\Bi \left( m\binom{n}{k-j},(1-\eps)p_0 \right) \ge m/\cconst \right) & \stackrel{\mbox{{\tiny Thm \ref{thm:chernoff}}}}{\le} \exp\left(-\frac{\eps^2 m^2/\cconst^2}{2\left((1-\eps)m/\cconst + \eps m/3\cconst\right)}\right)\\
& \le \exp \left( -\frac{\eps^2 m}{2\cconst} \right).
\end{align*}
If $m \ge 3\cconst k\eps^{-2}\log n $, then this probability is at most $n^{-3k/2}=o(n^{-k})$, and therefore we may take a union bound over all possible starting points for the interval, of which there are at most $\binom{n}{k} < n^k$, and still have a probability of $o(1)$. In other words, with high probability no such interval exists, and therefore no component of size $m\ge 3\cconst k\eps^{-2}\log n$ exists. Note that we were not concerned about optimising this bound on $m$.\qed

\section{Supercritical phase}\label{sec:supercrit}

In this section we prove Theorem~\ref{thm:main}~\eqref{thm:main:supercrit}, which is substantially harder. We first gather some basic, preliminary results which we will use several times during the argument.

\subsection{Preliminaries}

Let $\disc = \disc (t;J)$ denote the $j$-uniform hypergraph with vertex set $V=[n]$ whose edges are those $j$-sets which have been discovered by $\ubfs(J)$ up to time $t$, i.e.\ having made precisely $t$ queries so far. We will omit the arguments $t$ and $J$ if they are clear from the context.

The following lemma (Lemma~12 from~\cite{CoKaPe14}) will play a critical role; it says that early on in the search process (in particular in the time period we are interested in) the edges of $\disc$ are nicely distributed in the sense that no set of vertices is contained in too many $j$-sets of $\disc$.

\begin{lemma}[Bounded degree lemma~\cite{CoKaPe14}]\label{lem:crudemaxdeg}
Let $n^{-1+\delta} \ll \alpha \ll \eps \ll 1$. Then there exists a constant $C=C(k,j)$ such that we have with probability at least $1-\expprob$ for any $J,\ell,t$ satisfying
\begin{itemize}
\item $J$ is a $j$-set of vertices;
\item $0 \le \ell \le j-1$;
\item $t\le \alpha n^k$;
\end{itemize}
we have
\[
 \Delta_\ell (\disc(t;J)) \leq C \alpha n^{j-\ell}.
\]
\end{lemma}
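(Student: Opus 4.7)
The plan is to prove the lemma by induction on $\ell\in\{0,1,\dots,j-1\}$, controlling for each fixed starting $j$-set $J$, time $t\le \alpha n^k$, and $\ell$-set $L$ the quantity $d_L(\disc(t;J))$. A final union bound over the $n^{O(1)}$ choices of $(t,J,L,\ell)$ preserves the $\expprob$ failure probability we target.

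For the base case $\ell=0$, note $\Delta_0(\disc(t;J))=|E(\disc(t;J))|$ is the total number of discovered $j$-sets, and is bounded by $1+\cconst\cdot\Bi(t,p)$. Since $tp\le\alpha n^k p=\Theta(\alpha n^j)\gg n^{\delta/2}$, Chernoff (Theorem~\ref{thm:chernoff}) delivers $\Delta_0(\disc)\le C_0\alpha n^j$ with the required probability. For the inductive step, fix $\ell\ge 1$ and an $\ell$-set $L$, and condition on the inductive hypothesis for all $r<\ell$. Each increment to $d_L$ is triggered when $\ubfs$ dequeues some $j$-set $J'$ and discovers an edge $E\supseteq J'$ that also contains $L$; split by $r=|J'\cap L|$. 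For $r<\ell$, any such $J'$ contains an $r$-subset of $L$, so the number of dequeued $J'$ with $|J'\cap L|=r$ is at most $\binom{\ell}{r}\Delta_r(\disc)\le \binom{\ell}{r}C_r\alpha n^{j-r}$ by induction; each such $J'$ produces in expectation $\binom{n-j-\ell+r}{k-j-\ell+r}p=\Theta(n^{-\ell+r})$ edges containing $L$, for a total contribution to $d_L$ of $O(\alpha n^{j-\ell})$ in expectation, matched whp by Chernoff. For $r=\ell$ (so $J'\supseteq L$), each discovered edge from $J'$ introduces at most $\binom{k-\ell}{j-\ell}-1$ \emph{new} $j$-subsets containing $L$ (the $-1$ accounting for $J'$ itself, which is already discovered). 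The key fact is that the ``branching ratio''
\[
\Bigl(\binom{k-\ell}{j-\ell}-1\Bigr)\binom{n-j}{k-j}p \;\longrightarrow\; \frac{\binom{k-\ell}{j-\ell}-1}{\cconst} \;\le\; 1-\tfrac{1}{\cconst} \;<\; 1,
\]
since the identity $\binom{k}{j}-\binom{k-1}{j-1}=\binom{k-1}{j}\ge 1$ gives $\binom{k-\ell}{j-\ell}\le\binom{k-1}{j-1}\le\cconst$ whenever $\ell\ge 1$ and $k>j$. Introduce the stopping time $\tau=\min\{t\le\alpha n^k : d_L(\disc(t;J))>K\alpha n^{j-\ell}\}$ for a constant $K=K(k,j)$ chosen large enough that the expected total increment up to $\tau$ is strictly below $K\alpha n^{j-\ell}$. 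Applying Chernoff to the conditionally independent binomial contributions then shows $\Pr[\tau\le\alpha n^k]\le\expprob$, closing the induction.

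The main obstacle is the feedback loop in the $r=\ell$ case, where the rate of increase of $d_L$ is itself proportional to $d_L$ via the count of dequeued $j$-sets containing $L$. The stopping-time device resolves this, but only because of the strict constant gap $1/\cconst$ in the branching ratio; without the strict inequality $\binom{k-\ell}{j-\ell}\le\cconst$ (which fails precisely at $\ell=0$, handled directly in the base case), no such argument could upgrade the trivial $O(n^{j-\ell})$ bound (coming from $\Delta_\ell(\cH^k(n,p))$ alone) to the required $O(\alpha n^{j-\ell})$.
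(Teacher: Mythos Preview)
The paper does not prove this lemma; it is quoted verbatim as Lemma~12 from~\cite{CoKaPe14} and used as a black box. So there is no ``paper's own proof'' to compare against here.

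That said, your sketch is essentially the standard argument and is correct in outline. The induction on $\ell$, the decomposition of increments to $d_L$ by the intersection size $r=|J'\cap L|$, and---crucially---the observation that for $\ell\ge 1$ the internal branching ratio
\[
\Bigl(\tbinom{k-\ell}{j-\ell}-1\Bigr)\tbinom{n}{k-j}p \;\to\; \frac{\tbinom{k-\ell}{j-\ell}-1}{\cconst} \;<\; 1
\]
is strictly subcritical, are exactly the right ingredients. The stopping-time (equivalently, capping) device to break the feedback loop in the $r=\ell$ case is also the standard way to make this rigorous. One point you should tighten: when you write ``matched whp by Chernoff'' for the $r<\ell$ contributions, the number of relevant queries is itself random (it depends on which $j$-sets get dequeued), so you cannot apply Chernoff directly to a binomial with a deterministic number of trials. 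You need either the subsequence principle (Lemma~\ref{lem:subsequence} in this paper) or to fold everything into the same stopping-time argument: cap $d_L$ at $K\alpha n^{j-\ell}$, which together with the inductive bounds on $\Delta_r$ for $r<\ell$ caps the total number of queries to $k$-sets containing $L$ at a deterministic $M=O(\alpha n^{k-\ell})$, and then apply Chernoff to $\Bi(M,p)$. With that adjustment the argument goes through, and the final union bound over $n^{O(1)}$ choices of $(J,L,\ell)$ (only $t=\alpha n^k$ is needed, by monotonicity of $\Delta_\ell$ in $t$) is unproblematic since $n^{O(1)}\cdot\expprob=\expprob$.
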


As indicated in Section~\ref{sec:overview}, the main additional difficulty for the hypergraph case is the smooth boundary lemma (Lemma~\ref{lem:uniformboundary}), which may be seen as a significantly stronger version of Lemma~\ref{lem:crudemaxdeg}. However, the proof will make fundamental use of Lemma~\ref{lem:crudemaxdeg} and therefore does not render it superfluous.

As indicated previously, we aim to use $\lbr$ and $\ubr$ as lower and upper couplings on $\ubfs(J)$ for some initial $j$-set $J$. Let us describe more precisely what we mean by this. In our search process we will certainly always make at most $\binom{n}{k-j}$ queries from any $j$-set. If the actual number is $x\le \binom{n}{k-j}$, then we identify these with the first $x$ queries from an individual of $\ubr$ (which we view as a subtree of the infinite $\binom{n}{k-j}$-ary tree in which each edge is present with probability $p$, and we consider the subtree containing the root). The remaining $\binom{n}{k-j}-x$ queries in $\ubr$ are in effect ``dummy'' queries which do not exist in the search process, but making extra queries is permissible for an upper bound. Thus if we are at time $t$ in the search process, then $\ubr$ may have made more than $t$ queries. However, since we will generally be considering \emph{generations} of the search process rather than the exact time, this difference does not affect anything.

Similarly we can couple the search process with $\lbr$ by ignoring some queries which the search process makes, and considering only those queries which would give $\cconst$ new $j$-sets, and of these consider only the first $(1-\elbr)\binom{n}{k-j}$. Of course, this requires that there are at least this many such queries in the search process, which we will prove using Lemma~\ref{lem:crudemaxdeg}.

We will denote this coupling of processes (when it holds) by $\lbr \prec \ubfs(J) \prec \ubr$.

\begin{lemma}\label{lem:approximations}
With probability at least $1-\expprob$, the process $\ubfs(J)$ satisfies the following properties:
\begin{enumerate}[(A)]
\item \label{prop:edges} For every time $0\le t \le \binom{n}{k}$, the number of edges which we have found up to time $t$ is
\[\begin{cases}
(1\pm \eo)p_0t & \mbox{if } p_0t \ge n^{\delta}\\
\le (1+\eo)n^{\delta} & \mbox{otherwise.}
\end{cases}\]
\item \label{prop:crudemaxdeg} Given $n^{-1+\delta} \ll \alpha \ll \eps \ll 1$, after every $t\le \alpha n^k$ queries we have
$$
\Delta_\ell (\disc) \le C\alpha n^{j-\ell}
$$
 for all $ 1\le \ell \le j-1$ and for some constant $C=C(k,j)$.
\item \label{prop:coupling} If $\alpha \ll \elbr$, then for every $t \le \alpha n^k$, at time $t$ we have $\lbr \prec \ubfs\prec \ubr$. 
\end{enumerate}
\end{lemma}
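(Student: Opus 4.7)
The plan is to dispatch the three parts in order, noting that (B) is essentially a restatement of the bounded degree lemma, (A) is a standard concentration argument, and (C) is the conceptually most substantial step, building on (B).

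For (A), I would use the fact that $\ubfs(J)$ never queries the same $k$-set twice, so the number of edges discovered in the first $t$ queries is exactly $\Bi(t,p)$, a sum of i.i.d.\ Bernoulli variables. For each fixed $t$ with $p_0 t \ge n^\delta$, I would apply Theorem~\ref{thm:chernoff} with deviation $\zeta = \Theta(\eo p_0 t)$; the resulting tail bound is $\exp(-\Theta(\eo^2 p_0 t)) \le \exp(-\Theta(\eo^2 n^\delta))$, which is $\expprob$ since $\eo \gg n^{-\delta/24}$. For the range $p_0 t < n^\delta$, I would instead use the upper tail with $\zeta = \eo n^\delta$ to bound the edge count by $(1+\eo)n^\delta$. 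A union bound over the (at most $\binom{n}{k}$) values of $t$ only costs a factor $n^{O(k)}$, which is absorbed into the $\expprob$ error term.

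For (B), this is a direct consequence of Lemma~\ref{lem:crudemaxdeg}, since the hypergraph $\disc$ is defined precisely so that its edges are the $j$-sets discovered by $\ubfs(J)$; the quantifiers over $J,\ell,t$ match verbatim.

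For (C), I would condition on the event of (B) and analyse the queries made from an arbitrary active $j$-set $J$ at a moment when at most $\alpha n^k$ queries have been made globally. Call a $k$-set $K\supseteq J$ \emph{bad} if it contains some already-discovered $j$-set $J'\ne J$; then querying a bad $K$ produces strictly fewer than $\cconst$ new $j$-sets, while a non-bad $K$ yields exactly $\cconst$. I would bound the number of bad $K$ by partitioning on $\ell := |J\cap J'|\in\{\max(0,2j-k),\dots,j-1\}$: for each $\ell$-subset $L\subseteq J$, property (B) gives $d_L(\disc)\le C\alpha n^{j-\ell}$, and each such $J'$ lies in $\binom{n-2j+\ell}{k-2j+\ell}=O(n^{k-2j+\ell})$ $k$-sets containing $J\cup J'$. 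Summing first over the $\binom{j}{\ell}$ choices of $L$ and then over $\ell$ gives a geometric-in-$\ell$ cancellation, yielding $O(\alpha n^{k-j})$ bad $k$-sets total. Since $\alpha \ll \elbr$, this is at most $\elbr\binom{n}{k-j}$, so at least $(1-\elbr)\binom{n}{k-j}$ queries from $J$ are available that each produce $\cconst$ new $j$-sets, which is exactly the data needed to realise the coupling $\lbr\prec \ubfs(J)$. The upper coupling $\ubfs(J)\prec\ubr$ is immediate by padding the (at most $\binom{n}{k-j}$) genuine queries with dummy Bernoulli$(p)$ queries.

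The main obstacle is the combinatorial bookkeeping in (C), in particular checking that the $O(\alpha n^{k-j})$ bound on bad $k$-sets is uniform over all choices of active $J$ and all times $t\le \alpha n^k$; this is the reason (B) must be invoked with a maximum-degree bound rather than merely an expected-degree bound, and it is where the hierarchy $\alpha\ll\elbr$ is used decisively.
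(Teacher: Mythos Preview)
Your proposal is correct and follows essentially the same approach as the paper: Chernoff plus union bound for (A), direct invocation of Lemma~\ref{lem:crudemaxdeg} for (B), and for (C) bounding the ``bad'' $k$-sets from an active $J$ via $\sum_\ell \binom{j}{\ell}\Delta_\ell(\disc)\binom{n}{k-2j+\ell}=O(\alpha n^{k-j})$ using (B). One cosmetic point: the phrase ``geometric-in-$\ell$ cancellation'' is slightly misleading, since what actually happens is that the $\ell$-dependence in $n^{j-\ell}\cdot n^{k-2j+\ell}$ simply drops out, leaving each summand of the same order; also, when concluding that at least $(1-\elbr)\binom{n}{k-j}$ good queries remain, note that the total count of $k$-sets through $J$ is $\binom{n-j}{k-j}$ rather than $\binom{n}{k-j}$, so you implicitly use $1/n\ll\elbr$ as well as $\alpha\ll\elbr$.
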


\begin{proof}
Property~\eqref{prop:edges}: Let $e_t$ denote the number of edges found by time $t$. Note that $e_t$ has distribution $\Bi (t,p)$. Let us first consider the case $t\ge n^{\delta}/p_0$. Then by Theorem~\ref{thm:chernoff}
\[
\Pr \left(e_t \neq  (1\pm\eo) p_0 t\right) \leq 2\exp \left( -\frac{\eo^2 n^{2\delta}}{3 n^{\delta}} \right)  \le 2\exp (-n^{\delta/2}).
\]
Similarly if $t < n^{\delta}/p_0$ we have
\[
\Pr \left(e_t> (1+\eo) n^{\delta}\right) \le \Pr \left(\Bi \left(n^{\delta}/p_0, p_0\right)> (1+\eo)n^{\delta}\right) \le \exp (-n^{\delta/2}).
\]
Finally we apply a union bound over all times $t$ to bound the probability that Property~\eqref{prop:edges} is not satisfied by
\[
2\binom{n}{k} \exp (-n^{\delta/2}) \le \exp (-n^{\delta/2}/2) = \expprob
\]
as required.\\
Property~\eqref{prop:crudemaxdeg}: Follows directly from Lemma~\ref{lem:crudemaxdeg}.\\
Property~\eqref{prop:coupling}: The upper coupling is immediate from the definitions. The lower coupling follows from Property~\eqref{prop:crudemaxdeg}. More precisely, from any $j$-set $J$ we can bound the number of discovered $j$-sets which intersect $J$ in $\ell$ vertices from above by $\binom{j}{\ell}\Delta_\ell (\disc)$. For each such discovered $j$-set, the number of $k$-sets containing its union with $J$ is at most $\binom{n}{k-2j+\ell}$. Thus if Property~\eqref{prop:crudemaxdeg} holds then the number of queries from $J$ which would give fewer than $\cconst$ new $j$-sets is at most
\begin{align*}
\sum_{\ell=0}^{j-1} \binom{j}{\ell} \Delta_\ell (\disc) \binom{n}{k-2j+\ell} & = \sum_{\ell=0}^{j-1} O(\alpha n^{j-\ell} n^{k-2j+\ell})\\
& = O(\alpha n^{k-j}).
\end{align*}
Thus the number of $k$-sets which may still be queried from $J$ and which would give $\cconst$ new $j$-sets is at least
\[
\binom{n-j}{k-j}-O(\alpha n^{k-j}) \ge (1-\elbr)\binom{n}{k-j} 
\]
where the inequality holds since $1/n,\alpha \ll \elbr$.
\end{proof}

\begin{remark}
Note that Property~\eqref{prop:crudemaxdeg} and the lower coupling in Property~\eqref{prop:coupling} only hold early in the process. For the rest of this proof we will always assume that we are at a time $t\le \alpha n^k$ in the search process, for some $\alpha\ll \elbr$\,. Formally, we introduce a stopping condition, that we stop our search process at time $\alpha n^k$ if we haven't already, where we will have $\alpha = \Theta(\lcompconst) \ll \elbr$. However, we will have other stopping conditions and it will follow from those that with very high probability we will never reach time $\alpha n^k$ (see Lemma~\ref{rem:stopearly}).
\end{remark}

\begin{remark}
Throughout the paper we will have various Claims and Lemmas stating that a certain good event holds with very high probability, generally $\expprob$ (though later also $\expprobtwo$ appears). Without explicitly stating so, we will subsequently assume that the good event always holds. More formally, we introduce a new stopping condition for each lemma, and terminate the process if the corresponding good event does not hold. By a union bound over all bad events, as long as there are not too many of them, with very high probability no such stopping condition is ever invoked (note that $P(n)\cdot\expprob=\expprob$ for any polynomial $P$).
\end{remark}

We prove one more preliminary result which says that the expansion of the search process is approximately as fast as we expect (once the boundary becomes large).  In order to state the result, we first give some general notation: Consider the search process $\ubfs(J)$. Then we write $C_J=C_J(i)$ for the (partial) component that we have ``currently'' discovered, i.e.\ up to the $i$-th generation. If the meaning of ``currently'' is clear we sometimes drop the argument $i$. Similarly we use $\partial C_J=\partial C_J(i)$ to denote the $i$-th generation of the process.

\begin{lemma}[Bounded expansion]\label{lem:expansionbound}
Suppose that for some initial $j$-set $J$, at generation $i$, $\lbr \prec \ubfs(J) \prec \ubr$.
Conditioned on $\jgensize{i}=x_i \in \Nat$, with probability at least $1-\expprob$
\[\begin{cases}
\jgensize{i+1} = (1 \pm 2\elbr)(1+\eps)x_i & \mbox{if } x_i \ge n^{1-\delta}\\
\jgensize{i+1} \le 2\max (x_i,n^{\delta}) & \mbox{otherwise.}
\end{cases}
\]
\end{lemma}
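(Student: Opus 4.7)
The plan is to leverage the assumed couplings $\lbr \prec \ubfs(J) \prec \ubr$ at generation $i$ to sandwich $\jgensize{i+1}$ between two sums of i.i.d.\ Bernoulli variables, and then apply the Chernoff bound (Theorem~\ref{thm:chernoff}). The upper coupling gives that $\jgensize{i+1}$ is stochastically dominated by the number of children of the $x_i$ individuals at generation $i$ in $\ubr$, namely $\cconst \cdot \Bi(x_i \binom{n}{k-j}, p)$. The lower coupling, which retains only the first $(1-\elbr)\binom{n}{k-j}$ ``good'' queries per active $j$-set (those yielding $\cconst$ new $j$-sets, whose abundance is guaranteed by Property~\eqref{prop:crudemaxdeg} of Lemma~\ref{lem:approximations}), gives that $\jgensize{i+1}$ stochastically dominates $\cconst \cdot \Bi(x_i (1-\elbr)\binom{n}{k-j}, p)$. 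Since $p_0 = 1/\bigl(\cconst \binom{n}{k-j}\bigr)$ and $p = (1+\eps)p_0$, the expectations of the two sandwich variables are $(1+\eps)x_i$ and $(1-\elbr)(1+\eps)x_i$ respectively.

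For part (a), $x_i \ge n^{1-\delta}$, I would apply both tails of Theorem~\ref{thm:chernoff} to the two sandwich variables with deviation $\zeta = \elbr(1+\eps)x_i$; each tail probability is at most $\exp(-\Theta(\elbr^2 x_i))$. The hierarchy $\elbr \gg n^{-1/2+\delta}$ yields $\elbr^2 x_i \ge n^{2\delta-1}\cdot n^{1-\delta} = n^{\delta}$, so each tail is bounded by $\expprob$. Combining the upper tail of the upper variable with the lower tail of the lower variable gives $\jgensize{i+1} = (1 \pm 2\elbr)(1+\eps)x_i$ with the required probability.

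For part (b), $x_i < n^{1-\delta}$, only the upper sandwich is needed. Writing $M = \max(x_i, n^{\delta})$, the expectation is at most $(1+\eps)x_i \le (1+\eps)M$, so the deviation $\zeta$ required to reach $2M$ is at least $\Theta(M)$, and Theorem~\ref{thm:chernoff} gives tail probability $\exp(-\Theta(M)) \le \exp(-\Theta(n^{\delta})) = \expprob$.

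The main subtlety lies in the lower sandwich step: in principle, distinct-count collisions (two ``good'' queries from different active $j$-sets producing the same new $j$-set) could make $\jgensize{i+1}$ strictly smaller than the multi-count naturally suggested by $\lbr$. The $(1-\elbr)$ slack in the definition of $\lbr$ is tuned precisely to absorb such losses via the bounded-degree estimate, and this step is essentially already encapsulated in Property~\eqref{prop:coupling} of Lemma~\ref{lem:approximations}; once that is in hand, the only quantitative ingredient is the Chernoff calculation above, whose sole nontrivial input is the hierarchy bound $\elbr^2 x_i \ge n^{\delta}$.
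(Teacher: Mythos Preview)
Your proposal is correct and follows essentially the same route as the paper: sandwich $\jgensize{i+1}$ between $\cconst\cdot\Bi\bigl(x_i\binom{n}{k-j},p\bigr)$ and $\cconst\cdot\Bi\bigl((1-\elbr)x_i\binom{n}{k-j},p\bigr)$ via the couplings, then apply Theorem~\ref{thm:chernoff} using $\elbr^2 x_i \ge n^{\delta}$ for the first case and the trivial $\Theta(M)$ deviation for the second. Your remark about distinct-count collisions is apt and is indeed exactly what is absorbed into Property~\eqref{prop:coupling}; if anything, your treatment of the ``otherwise'' case via $M=\max(x_i,n^{\delta})$ is slightly cleaner than the paper's, which in the appendix only spells out the subcase $x_i\le n^{\delta}$.
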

Note that the second half of the statement gives us only a much weaker upper bound, but applies to smaller $x_i$. 

\begin{proof}
We prove the upper bound for the first half of the statement here. The other cases are very similar and are given in Appendix~\ref{app:proofs} for completeness.

Note that since $\ubr$ is an upper coupling, conditioned on $\jgensize{i}=x_i$, the size of the next generation $\jgensize{i+1}$ is stochastically dominated by a random variable $Y_{i+1}$ with distribution $\cconst \cdot \Bi \big(x_i \binom{n}{k-j},p\big)$. We have 
\[
\EE(Y_{i+1}) =\cconst x_i \binom{n}{k-j}p = (1+\eps)x_i.
\]
Furthermore, by the Chernoff bound (Theorem~\ref{thm:chernoff}) we have
\begin{align*}
\Pr (\jgensize{i+1}\ge (1+2\elbr)(1+\eps)x_i) & \le \Pr(Y_{i+1}\ge (1+2\elbr)(1+\eps)x_i)\\
& = \Pr \left( \tfrac{Y_{i+1}}{\cconst}\ge (1+2\elbr)\EE\left(\tfrac{Y_{i+1}}{\cconst}\right)\right)\\
& \le  \exp \left(\frac{-(2\elbr)^2 \EE(Y_{i+1})}{3\cconst}\right)\\
& = \exp (-\Theta (\elbr^2 x_i))\\
& \le \exp (-\Theta (n^{-1+2\delta} n^{1-\delta}))\\
& \le \expprob
\end{align*}
where for the penultimate inequality we used the fact that $\elbr \ge n^{-1/2+\delta}$.
This concludes the proof of the upper bound for the first half of the statement.
\end{proof}
We have the following corollary. Let $B_i$ be the event
\begin{align*}
B_i := & \left\{\jgensize{i} \ge n^{1-\delta} \wedge \jgensize{i+1} \neq (1 \pm 2\elbr)(1+\eps)\jgensize{i}\right\} 
\\& 
\vee \left\{\jgensize{i+1} > 2\max (x_i,n^{\delta}) \right\}.
\end{align*}

\begin{cor}\label{cor:expansionbound}
With probability at least $1- \expprob$, no $B_i$ occurs for any $i$ such that $\lbr \prec \ubfs(J) \prec \ubr$.
\end{cor}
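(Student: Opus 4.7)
The plan is to derive the corollary directly from Lemma~\ref{lem:expansionbound} by a union bound over generations $i$. Applied conditionally on the history $\mathcal{F}_i$ of the search process up through the $i$-th generation (which determines both $x_i = \jgensize{i}$ and whether $\lbr \prec \ubfs(J) \prec \ubr$ currently holds), Lemma~\ref{lem:expansionbound} asserts that on the event that the coupling holds at generation $i$, the conditional probability that $B_i$ occurs is at most $\expprob$, uniformly in $x_i$. Integrating out the history yields the unconditional bound $\Pr(B_i \wedge \{\text{coupling holds at gen.\ } i\}) \le \expprob$ for each individual $i$.

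The remaining ingredient is a polynomial bound on the number of generations we must consider. Since each nonempty generation of $\ubfs(J)$ contributes at least one previously undiscovered $j$-set to $\comp$, and $\comp$ contains at most $\binom{n}{j} \le n^j$ distinct $j$-sets, the index $i$ ranges over at most $n^j$ values before the process necessarily terminates. A union bound then yields
\[
\Pr\bigl(\exists\, i : B_i \text{ holds and the coupling holds at gen.\ } i\bigr) \le n^j \cdot \expprob = \expprob,
\]
since $\expprob = \exp(-\Theta(n^{\delta/2}))$ absorbs any polynomial factor in $n$ (this is the same mechanism that justifies the union-bound remark immediately following Lemma~\ref{lem:approximations}).

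I do not anticipate any serious obstacle here, as the corollary is essentially a bookkeeping step that lifts the per-generation bound of Lemma~\ref{lem:expansionbound} to a statement holding uniformly across the search process. The only mild care needed is in the conditioning: one should verify that the branching-process couplings $\lbr$ and $\ubr$ used in the statement and proof of Lemma~\ref{lem:expansionbound} depend only on the fresh queries made while exposing generation $i+1$, and are therefore independent of the revealed history $\mathcal{F}_i$ — this is built into the coupling construction described in Section~\ref{sec:prelim}, and in particular the lower coupling with $\lbr$ only requires Property~\eqref{prop:crudemaxdeg} of Lemma~\ref{lem:approximations} to hold, which is already absorbed into the $\expprob$-error that we allow.
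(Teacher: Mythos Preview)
Your proposal is correct and follows essentially the same route as the paper: apply Lemma~\ref{lem:expansionbound} conditionally (the paper conditions on the event $\{\jgensize{i}=x_i\}$ and sums over $x_i$, you condition on the full history $\mathcal{F}_i$), then take a union bound over the polynomially many generations (the paper uses the cruder bound $n^k$, you use $n^j$). The only cosmetic difference is your extra remark about the coupling depending only on fresh queries, which the paper leaves implicit.
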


\begin{proof}
For each $i$ and for each $x_i \in \mathbb{N}$, let $A_i(x_i):=\{\jgensize{i}=x_i\}$. Then
\begin{align*}
\Pr(B_i) = & \sum_{x_i} \Pr\left(B_i\cond A_i(x_i)\right)\Pr(A_i(x_i))\\
\le & \sum_{x_i}\Pr\left(\jgensize{i+1} > 2\max (x_i,n^{\delta})\cond A_i(x_i)\right)\Pr(A_i(x_i))\\
& +   \sum_{x_i \ge n^{1-\delta}}\Pr\left( \jgensize{i+1} \neq (1 \pm 2\elbr)(1+\eps)x_i\cond A_i(x_i)\right)\Pr(A_i(x_i))\\
\le & \sum_{x_i}\expprob \Pr(A_i(x_i)) 
 + \sum_{x_i \ge n^{1-\delta}}\expprob \Pr(A_i(x_i))\\
= & \, 2\, \expprob
\end{align*}
and a union bound over all choices of $i$ (of which there are certainly less than $n^k$) gives the desired result.
\end{proof}

Note that since $\elbr \ll \eps$, this lemma implies that in the time range that we are usually interested in (i.e. when $x_i \ge n^{1-\delta}$), with high probability the size of the boundary never decreases. This is useful since we later have various concentration results which always require the boundary to have a certain minimum size. This lemma ensures that these conditions will always remain valid once they are attained.

\begin{remark} 
The pattern of Lemma~\ref{lem:expansionbound} and Corollary~\ref{cor:expansionbound} will be repeated several times; we prove that a certain event holds with high probability, where the event is dependent on a random variable (i.e. the size of the boundary) but the probability is not. With this uniform probability we may always assume that the good event holds (if necessary taking a union bound, for example over different generations). From now on, we will only give the lemma and use the corresponding corollary implicitly without stating it.
\end{remark}

\subsection{Branching processes}

We will also need some results on branching processes. The arguments here are similar to standard and well-known arguments, but the actual processes may have an unfamiliar distribution, so for completeness we give the full arguments in Appendix~\ref{app:branchings}.

\subsubsection{Survival probability}\label{sec:survprob}

Consider the branching process $\ubr$, in which the number of children has distribution $\cconst \cdot \Bi \big(\binom{n}{k-j},p\big)$. For the supercritical case we have $p=(1+\eps)p_0$. By setting up a suitable recursion, we may deduce that the survival probability $\varrho$ of this process is the unique positive solution to the equation
\[
(1-\varrho)^\cconst = \sum_{i=0}^{\cconst\binom{n}{k-j}} \Pr \left(\Bi \left(\cconst\binom{n}{k-j},p\right)=i\right) (1-\varrho)^{\cconst i}
\]
which is asymptotically
\begin{equation}\label{survival1}
\varrho \sim 2\eps /\cconst
\end{equation}
(see Appendix~\ref{app:branchings}). Likewise the lower coupling process $\lbr$, in which the number of children has distribution $\cconst \cdot \Bi \big((1-\elbr)\binom{n}{k-j},p\big)$, has survival probability $\varrho_*$ with
\begin{equation}\label{survivalSub1}
\varrho_* \sim 2\eps /\cconst.
\end{equation}

\subsubsection{Dual process}

When we explore a component and the lower coupling $\lbr$ survives indefinitely we can conclude that the component must be large, since the search process certainly survives until $\lbr$ is no longer a lower coupling. However if the upper coupling $\ubr$ dies out we need some information on the total size of all its generations in order to decide whether this implies that the component must be small. 

Recall that $\ubr$ has the offspring distribution
$$\cconst \cdot \Bi\left(\binom{n}{k-j},p\right)$$
where $p=(1+\eps)p_o$. We denote  the event that $\ubr$ dies out by $\death$ and condition on it. This defines a conditional branching process $\dualbr,$ called the \emph{dual process}, with offspring distribution
$$\cconst \cdot \Bi\left(\binom{n}{k-j},\pdualbr\right),$$
where
\[
\pdualbr =(1-\eps + o(\eps))p_0
\]
(see Appendix~\ref{app:branchings} for a proof).
Thus the expected number of children of any individual in $\dualbr$ is given by
$$\cconst \binom{n}{k-j}\pdualbr=1-\eps+o(\eps)<1$$
and hence in particular the dual process is a subcritical branching process. Therefore we can give an asymptotic estimate on the expected total size of $\dualbr$ by standard techniques (see \cite{Harris63}):
\begin{align}\label{expDual}
\EE\left(\sdualbr\right)=\sum\limits_{i=0}^{\infty}\left(1-\eps+o(\eps)\right)^i=\frac{1}{1-(1-\eps+o(\eps))}\sim\eps^{-1}.
\end{align}
Consequently we can bound the probability of the process $\ubr$ being larger than some $\lcompsize=\lcompsize(n)$ by conditioning on $\death$ and applying Markov's Inequality
\begin{align}
\Pr\left(\subr\geq \lcompsize\right) & = \Pr \left( \neg \death\right)\cdot 1 + \Pr (\death)\cdot\Pr\left(\subr\geq \lcompsize\cond \death\right) \nonumber\\
&\leq \Pr\left(\neg \death\right) +1\cdot\Pr\left(\sdualbr\geq \lcompsize\right)\nonumber\\
& \stackrel{\eqref{expDual} }{\leq} \varrho + (1+o(1)) \eps^{-1}/\lcompsize\nonumber\\
& \stackrel{\eqref{survival1}}{\sim}  \frac{2\eps}{\cconst}\, ,\label{probUbrLarge}
\end{align}
as long as $\eps^2\lcompsize\to\infty.$

\subsection{First moment}

Let $X$ denote the number of $j$-sets in components of size at least
$$\lcompsize:=\lcompconst n^j$$
and observe that $\eps^2\lcompsize\to\infty$ is satisfied. The principle difficulty is in showing that $X$ is with high probability approximately as we would expect. We first calculate what this expectation is using the upper coupling $\ubr$ and the lower coupling $\lbr$ and obtain
\begin{align*}
\EE\left(X\right)\leq\binom{n}{j}\Pr\left(\subr\geq \lcompsize\right)\stackrel{\eqref{probUbrLarge}}{\sim} \frac{2\eps}{\cconst}\binom{n}{j}
\end{align*}
and similarly 
\begin{align*}
\EE\left(X\right)\geq\binom{n}{j}\Pr\left(\slbr=\infty\right)= \varrho_*\binom{n}{j}\stackrel{\eqref{survivalSub1}}{\sim} \frac{2\eps}{\cconst}\binom{n}{j}\, .
\end{align*}
Hence we have 
\begin{equation}
\EE (X) =(1\pm o(1))\frac{2\eps}{\cconst}\binom{n}{j}\, .
\label{firstMoment}
\end{equation}

\subsection{Second moment}\label{sec:supercrit:secondmoment}

Let $\lcomps$ denote the union of all components of size at least $\lcompsize$. In order to apply Chebyshev's inequality to prove that $X$ is concentrated around its expectation, we need to show that $\EE(X^2) \sim \EE(X)^2$. We may interpret $X^2$ as the number of ordered pairs of $j$-sets in large components (formally we may pick the same $j$-set twice in such a pair) and thus we can write its expectation as 
$$\EE\left(X^2\right)=\sum_{J_1\in\binom{V}{j},\, J_2\in\binom{V}{j}}\Pr\left(J_1,J_2\in\lcomps\right)\, . $$

Fix an arbitrary $j$-set $J_1$. We grow a component $\compone$ from $J_1$ using $\ubfs$.
We denote the upper coupling branching process for the exploration by $\ubrone$ (i.e. $\ubrone$ is a particular instance of $\ubr$). We continue to grow the component until one of the following three stopping conditions is reached:
\begin{enumerate}
\renewcommand{\theenumi}{S\arabic{enumi}}
\item \label{primaryStopCond1} The component $\compone$ is fully explored (i.e. no $j$-sets are still active);
\item \label{primaryStopCond2} The component $\compone$ has reached size $\lcompsize=\lcompconst n^j$;
\item \label{primaryStopCond3} The boundary of the component $\boundary$ has reached size $\lcompconst\lcompsize= \lcompconst^2 n^j$.
\end{enumerate}

A technical point on both the second and the third stopping conditions is that we check them only at the end of each generation, i.e. we do not stop in the middle of a generation. This is in contrast to the stopping procedure in the graph case in~\cite{BR12b} -- in the hypergraph case, stopping immediately would lead to significant technical difficulties due to the boundary being spread over two generations. Therefore our convention is much more convenient once paired with Corollary~\ref{cor:expansionbound}, which automatically implies that neither the boundary nor the whole component ends up being significantly bigger than the threshold for the stopping condition, as formulated in the following remark.

\begin{remark}\label{rem:overshoot}
With probability at least $1-\expprob$, when the stopping conditions are reached we have
\begin{align*}
|\boundary| & \le 2\lcompconst^2 n^j;\\
|\compone| & \le 2\lcompconst n^j.
\end{align*}
\end{remark}

Having grown $\compone$ in this way, let us first consider the contribution to $\EE (X^2)$ that comes from $j$-sets $J_2$ which lie inside $\compone$. By Remark~\ref{rem:overshoot}, there are at most $2\lcompsize$ such $j$-sets, and therefore the contribution is at most

\begin{align}
\sum_{J_1} \sum_{J_2 \in \compone} \Pr (J_1,J_2 \in \lcomps) & \le \binom{n}{j} 2\lcompsize \cdot O(\eps)\nonumber\\
& = O(\eps \lcompconst  n^{2j}) =  o(\eps^2 n^{2j}),\label{secondMomentintersecting}
\end{align}
Since $\EE(X^2)\ge \EE(X)^2 = \Theta (\eps^2 n^{2j})$, this contribution will be negligible.

Let us therefore assume that $J_2$ lies outside $\compone$ and fix it for the remainder of the proof. We delete all the $j$-sets of $\compone$ from $\cH^k(n,p)$ -- any $k$-sets containing them may now no longer be queried.

We now start a new $\ubfs$ process from $J_2$ and thus grow a component $\comptwo$ until one of the following two stopping conditions is reached:

\begin{enumerate}
\renewcommand{\theenumi}{T\arabic{enumi}}
\item \label{secondaryStopCond1} The component $\comptwo$ is fully explored;
\item \label{secondaryStopCond2} The component $\comptwo$ has reached size $\lcompsize =\lcompconst n^j$.
\end{enumerate}
Again, we only stop at the end of a generation.
The following lemma will be critical to the argument

\begin{lemma}\label{lem:stopearly}\label{rem:stopearly}
With probability at least $1-\expprob$, $\lbr$ is a lower coupling for the search processes of $\compone$ and $\comptwo$ until these stopping conditions are reached.
\end{lemma}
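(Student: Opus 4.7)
The plan is to bound the total number of queries made across both search processes, apply the bounded degree lemma (Lemma~\ref{lem:crudemaxdeg}) to each one, and then reuse the counting argument from the proof of Lemma~\ref{lem:approximations}\eqref{prop:coupling}. The upper coupling is essentially automatic, so the content lies in the lower coupling.

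First I would bound the exploration time. By the stopping conditions \eqref{primaryStopCond1}--\eqref{primaryStopCond3} and Remark~\ref{rem:overshoot}, with probability at least $1-\expprob$ we have $|\compone| \le 2\lcompsize$ and $|\comptwo| \le 2\lcompsize$, so the total number of queries made during both searches is at most
\[
(|\compone|+|\comptwo|)\binom{n-j}{k-j} = O(\lcompconst n^{k})=\alpha n^k
\]
for some $\alpha=\Theta(\lcompconst)$, and crucially $\alpha\ll\elbr$ by our hierarchy of parameters.

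Next I would apply Lemma~\ref{lem:crudemaxdeg} twice: once to the discovered $j$-set hypergraph $\disc_1$ arising from the search of $\compone$, and once to $\disc_2$ arising from the (restricted) search of $\comptwo$. Since neither search makes more than $\alpha n^k$ queries, with probability at least $1-\expprob$ we have, for all $0\le\ell\le j-1$,
\[
\Delta_\ell(\disc_1),\,\Delta_\ell(\disc_2) \le C\alpha n^{j-\ell}.
\]
(For the second application, the BFS from $J_2$ is just a BFS on the random hypergraph that happens to skip certain $k$-sets, so the conclusion of Lemma~\ref{lem:crudemaxdeg} applies verbatim.) The upper coupling $\ubfs\prec\ubr$ then holds trivially for both processes: in each case we make at most $\binom{n-j}{k-j}$ queries from any $j$-set, and deleting $j$-sets of $\compone$ only further restricts the set of allowable queries in the second search.

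For the lower coupling, I would run the argument of Lemma~\ref{lem:approximations}\eqref{prop:coupling} in the combined setting. A $k$-set containing a currently active $j$-set $J'$ yields fewer than $\cconst$ new $j$-sets precisely when it contains some other $j$-set $J''$ which is either (i) previously discovered in the current search, or (ii) contained in $\compone$ (hence deleted, in the case of the $\comptwo$ search). In both cases $J''$ is counted by $\Delta_{|J'\cap J''|}(\disc_1\cup\disc_2)$. Summing over the intersection size $\ell$ gives the bound
\[
\sum_{\ell=0}^{j-1}\binom{j}{\ell}\bigl(\Delta_\ell(\disc_1)+\Delta_\ell(\disc_2)\bigr)\binom{n}{k-2j+\ell}=O(\alpha n^{k-j}),
\]
which, since $\alpha\ll\elbr$, is at most $\elbr\binom{n}{k-j}$. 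Therefore at least $(1-\elbr)\binom{n}{k-j}$ $k$-sets containing $J'$ can still be queried and would give $\cconst$ new $j$-sets, so $\lbr\prec\ubfs$ for both searches until the stopping conditions are reached.

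The only delicate point is justifying that Lemma~\ref{lem:crudemaxdeg} is applicable to the restricted BFS from $J_2$, but this is purely a statement about the random hypergraph $\cH^k(n,p)$ rather than about the order in which $k$-sets are queried, so skipping certain $k$-sets (those touching $\compone$) does not invalidate the bound. Everything else is bookkeeping, and the whole statement holds with the claimed probability $1-\expprob$ by a union bound over the two applications of the bounded degree lemma and the overshoot bound in Remark~\ref{rem:overshoot}.
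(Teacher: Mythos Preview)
Your overall strategy---bound the number of queries, invoke the bounded degree lemma, and rerun the counting argument from Lemma~\ref{lem:approximations}\eqref{prop:coupling}---is exactly what the paper does. The one place where your write-up is thinner than the paper is precisely the point you flag as ``delicate'': the applicability of Lemma~\ref{lem:crudemaxdeg} to the restricted BFS from $J_2$.

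Your justification (``this is purely a statement about the random hypergraph rather than about the order in which $k$-sets are queried'') is not quite right as stated: Lemma~\ref{lem:crudemaxdeg} is formulated for the specific process $\ubfs(J)$ in the \emph{complete} hypergraph, and $\disc(t;J)$ does depend on which $k$-sets get queried. The paper makes this rigorous by introducing an auxiliary process: run $\ubfs(J_2)$ in the complete hypergraph but with the edge probability of every $k$-set containing a $j$-set of $\compone$ reduced to zero. This auxiliary process is dominated by the genuine $\ubfs(J_2)$ in the complete hypergraph, so Lemma~\ref{lem:crudemaxdeg} controls it---\emph{provided} its query count, which now includes dummy queries to the forbidden $k$-sets, is still $O(\lcompconst n^k)$. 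That extra count is exactly what you are missing: there are at most $2\lcompconst n^j$ deleted $j$-sets (Remark~\ref{rem:overshoot}), each lying in at most $\binom{n}{k-j}$ $k$-sets, so the number of dummy queries is $O(\lcompconst n^k)$ as needed. Your bound $|\comptwo|\binom{n-j}{k-j}$ only counts the real queries and so does not by itself license the application of Lemma~\ref{lem:crudemaxdeg}. Once you add this dummy-query estimate, your argument and the paper's coincide.
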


\begin{proof}
Note that when we stop exploring $\compone$, because of stopping condition~\eqref{primaryStopCond1} and Property~\eqref{prop:edges} we have made $O(\lcompconst n^k)$ queries in $\ubfs$. Therefore by Property~\eqref{prop:coupling}, $\lbr$ is a lower coupling for $\ubfs$ throughout the exploration of $\compone$.

We would like to say something similar about the second search process for $\comptwo$. However, we cannot simply apply Property~\eqref{prop:coupling} since it relied on Lemma~\ref{lem:crudemaxdeg}, which only applies to a search process within the complete hypergraph. The hypergraph in which we run the search process for $\comptwo$ is not complete, since the $j$-sets of $\compone$ have been deleted.

We solve this problem by considering an equivalent auxiliary search process in the complete hypergraph, but in which we reduce some edge probabilities to zero, specifically those $k$-sets containing $j$-sets of $\compone$. It is clear that this process is dominated by the usual process in the complete hypergraph, to which we can apply Lemma~\ref{lem:crudemaxdeg}. However, we still need to know that the number of queries in this auxiliary search process is $O(\lcompconst n^k)$, i.e. that we have not made too many additional dummy queries.

To prove this, we simply observe that the number of $j$-sets of $\compone$ is at most $2\lcompconst n^j$, by Remark~\ref{rem:overshoot}, and thus the number of $k$-sets containing such a $j$-set is at most $2\lcompconst n^j \binom{n}{k-j} = O(\lcompconst n^k)$. Therefore the total number of additional queries that we can possibly make is $O(\lcompconst n^k)$ as required.
\end{proof}

Lemma~\ref{lem:stopearly} tells us that the $\lbr$ is a lower coupling for both the first and the second search processes (which also means that $\ubr$ is a good approximation.) Therefore as calculated before for $\compone$, the probability that $\comptwo$ grows large is $(1\pm o(1))2\eps/\cconst$.

Let us now compare the reasons why the search processes stop with the events that the corresponding components are large. 
First, observe that if the exploration $\compone$ stopped because the component was fully explored, \eqref{primaryStopCond1}, then it never reached size $\lcompsize$ and therefore $J_1$ does not lie in a large component. Consequently it will not contribute to $X^2.$ Hence we are interested in the case when the exploration of the component of $J_1$ stops due to stopping condition \eqref{primaryStopCond2} or \eqref{primaryStopCond3} and we call this event $\primaryStop.$ From the previous observation it is immediate that 
 \begin{equation*}
 \left\{J_1\in\lcomps\right\} \implies \primaryStop
 \end{equation*}
  and thus
\begin{equation*}
\Pr\left(J_1,\,J_2\in\lcomps\right)\leq \Pr\left(\primaryStop\wedge\left\{J_2\in\lcomps\right\}\right)=\Pr\left(\primaryStop\right)\Pr\left(J_2\in\lcomps\cond \primaryStop\right)\,.
\end{equation*}
  Consequently we have 
\begin{align}
\EE\left(X^2\right)&\stackrel{\eqref{secondMomentintersecting}}{\leq} \binom{n}{j}\Pr\left(\primaryStop\right)\sum_{J_2\in\binom{V}{j}\setminus \compone}\Pr\left(J_2\in\lcomps\cond \primaryStop\right)+o\left(\eps^2n^{2j}\right)\nonumber\\
&\leq (1+o(1)) \binom{n}{j}\Pr\left(\primaryStop\right)\sum_{J_2\in\binom{V}{j}\setminus \compone}\Pr\left(J_2\in\lcomps\cond \primaryStop\right).
\label{secondMomentafterPrimaryStop}
\end{align}  

In order to give a suitable upper bound for $\Pr\left(\primaryStop\right)$ we have to analyse the upper coupling $\ubrone$ if we stop due to stopping condition \eqref{primaryStopCond2} or \eqref{primaryStopCond3}. For technical reasons we distinguish two cases in a way that might seem a little awkward: If stopping condition \eqref{primaryStopCond2} was implemented, then clearly $J_1$ does lie in a large component and therefore the \emph{total size} of $\ubrone$ will be at least $\lcompsize$ since we already reached that size when we stopped the exploration. This motivates a case distinction according to the following implication
\begin{equation}
\primaryStop \implies \left\{\subrone\geq \lcompsize\right\}\vee\left(\primaryStop\wedge \left\{\subrone<\lcompsize\right\}\right)\, .
\label{impl1}
\end{equation}
Let $\primaryWidth$ be the event that the generation of $\ubrone$ at which we stopped the exploration process is larger than $\lcompconst\lcompsize.$ Observe that
\begin{equation}
\primaryStop\wedge \left\{\subrone<\lcompsize\right\}\implies\primaryWidth\wedge\left\{\subrone<\lcompsize\right\}  \, ,
\label{impl2}
\end{equation}
since the event $\primaryStop$ can only hold (subject to $\left\{\subrone<\lcompsize\right\}$) if the boundary $\boundary$ of the explored component was at least of size $\lcompconst\lcompsize$ and hence the corresponding generation of the upper coupling must also have been large, i.e.\ $\primaryWidth$ needs to hold. Since 
\begin{equation}
\left\{\subrone<\lcompsize \right\}\implies \left\{\subrone<\infty\right\},
\label{impl3}
\end{equation}
 we want to consider the event that $\ubrone$ dies out after having had a large generation. Intuitively we would imagine that the chances of this happening are very small so let us make this intuition more precise. 

Assume that $\primaryWidth$ holds. Then there is a generation of $\ubrone$ with at least $\lcompconst \lcompsize$ $j$-sets in the boundary, and from each of these we start an independent copy of $\ubr$, all of which need to die out in order for $\ubrone$ to die out.
Thus we have
\begin{align}
\Pr\left(\subrone<\infty\cond \primaryWidth\right)\leq \Pr\left(\subr<\infty\right)^{\lcompconst\lcompsize} \leq \left(1-\frac{\eps}{2^k}\right)^{\lcompconst \lcompsize} \leq \exp\left(-\frac{\eps \lcompconst\lcompsize}{2^k}\right)=o(1)\, , \label{probStopWidth}
\end{align}
 since the survival probability of $\ubr$ is $(1\pm o(1))\frac{2\eps}{\binom{k}{j}-1} \geq \eps/2^k$. This implies
 \begin{align}
 \Pr\left(\primaryWidth\wedge \left\{\subrone<\infty\right\}\right)
& = \Pr(\primaryWidth) \Pr\left(\subrone < \infty \cond \primaryWidth\right)  \nonumber \\
&=\Pr\left(\primaryWidth\wedge \left\{\subrone=\infty\right\}\right)\cdot\frac{\Pr\left(\subrone<\infty\cond \primaryWidth\right)}{\Pr\left(\subrone=\infty\cond \primaryWidth\right)}\nonumber\\
 &\stackrel{\eqref{probStopWidth}}{\le} \Pr\left(\subrone=\infty\right) \cdot o(1) \nonumber\\
  &\stackrel{\eqref{survival}}{=}o(\eps)\, .
  \label{probStopWidth2}
 \end{align}
 Putting everything together we obtain
 \begin{align}
	\Pr\left(\primaryStop\right)&\stackrel{\eqref{impl1},\,\eqref{impl2},\,\eqref{impl3}}{\leq}
	\Pr\left(\subrone\geq\lcompsize\right)+\Pr\left(\left\{\subrone<\infty\right\}\wedge\primaryWidth\right)\nonumber\\
	&\stackrel{\eqref{probUbrLarge},\, \eqref{probStopWidth2}}{\le}\frac{2\eps}{\cconst}+o\left(\eps\right)\, .
	\label{probPrimaryStop}
 \end{align}
Substituting~\eqref{probPrimaryStop} into~\eqref{secondMomentafterPrimaryStop}, we obtain
\begin{equation}\label{secondMomentafterPrimaryStop2}
\EE (X^2) \le (1+o(1))\frac{2\eps}{\cconst}\binom{n}{j}\sum_{J_2\in \binom{V}{j}\setminus \compone} \Pr\left(J_2 \in \lcomps \cond \primaryStop\right).
\end{equation}

Let us now consider the component containing a given $J_2\in\binom{V}{j}\setminus\compone$. If $\comptwo$ has size at least $\lcompsize$, then certainly the component containing $J_2$ in $\cH^k(n,p)$ has size at least $\lcompsize$. On the other hand, if $\comptwo$ stops because of \eqref{secondaryStopCond1}, then it may be that in fact the whole component is large, but we missed some of it because of the $j$-sets of $\compone$ which we deleted. We would therefore like to say that the number of possible queries between these two sets is small, and so we are unlikely to have missed any edges because of deleting $\compone$.

We first observe that such queries can only occur between $\comptwo$ and the boundary $\boundary$ of $\compone$ (any $j$-sets of $\compone$ not in $\boundary$ were already fully explored). The intuition behind the argument is that $\comptwo$ remains small, while the boundary of $\compone$ is very small, so the number of pairs of $j$-sets, one from each side, should still be small. We might therefore expect that there are very few $k$-sets containing such pairs.

The problem with this basic argument is that the number of $k$-sets containing a pair of $j$-sets is heavily dependent on the size of their intersection. While on the whole most pairs of $j$-sets do not intersect, those which do carry disproportionately large weight because there are many more $k$-sets containing both of them.

We therefore aim to show that $\boundary$ is ``smooth'' in the sense that for any $0\le \ell \le j-1$ and for any $\ell$-set $L$, the number of $j$-sets in $\boundary$ which contain $L$ is about the ``right'' number, and in particular almost the same regardless of the choice of $L$ (though dependent on $|L|=\ell$). This statement is formalised in the following lemma.

Let $\stoptime$ denote the generation at which $\compone$ hits one of the stopping conditions (\eqref{primaryStopCond1}, \eqref{primaryStopCond2}, \eqref{primaryStopCond3}). Recall that $d_L(\generation{i})$ is the degree of $L$ in the $j$-uniform hypergraph with vertex set $V=[n]$ and edge set $\generation{i}$, i.e.\ the number of $j$-sets of the boundary containing $L$.

\begin{lemma}\label{lem:smoothstop}
Conditioned on $\primaryStop$, with probability at least $1-\expprob$, for every $\ell,L$ such that
\begin{itemize}
\item $0 \le \ell \le j-1$;
\item $L$ is an $\ell$-set of vertices;
\end{itemize}
the following holds:
$$d_L(\generation{\stoptime}) = (1\pm o(1))\frac{|\generation{\stoptime}|}{\binom{n}{j}}\binom{n}{j-\ell}.$$ 
\end{lemma}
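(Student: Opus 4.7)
My approach is to reduce Lemma~\ref{lem:smoothstop} to a direct application of the smooth boundary lemma (Lemma~\ref{lem:crudeuniformboundary}) at the specific generation $i = \stoptime$. The hypothesis of Lemma~\ref{lem:crudeuniformboundary} requires $\starttime{\ell} + \Theta(\log n) \le i \le \stoptime$ for each $0 \le \ell \le j-1$, so the entire task reduces to verifying that, under the conditioning event $\primaryStop$, we have $\stoptime \ge \starttime{\ell} + \Theta(\log n)$ for every such $\ell$.

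To verify this window condition, I would first bound $|\generation{\stoptime}|$ from below. Conditioning on $\primaryStop$ forces stopping condition~\eqref{primaryStopCond2} or~\eqref{primaryStopCond3} to be invoked. In the latter case, the boundary directly has size at least $\lcompconst^2 n^j$. In the former case, Corollary~\ref{cor:expansionbound} tells us that consecutive generation sizes grow approximately by a factor of $(1+\eps)$, so the geometric-series identity forces the final generation to have size at least $\Theta(\eps \lcompsize) = \Theta(\eps \lcompconst n^j)$. Using the hierarchy $\eps \gg \lcompconst \gg n^{-1/2+\delta}$, either case yields
\[
|\generation{\stoptime}| \ge n^{j-1+2\delta}.
\]

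For the second step, I would use Corollary~\ref{cor:expansionbound} again: since consecutive generations differ by at most a factor of $2$, growing from $|\generation{\starttime{\ell}}| \approx n^\ell$ up to $|\generation{\stoptime}| \ge n^{\ell + 2\delta}$ (which holds whenever $\ell \le j-1$) requires at least $2\delta \log_2 n = \Omega(\log n)$ additional generations. This gives $\stoptime \ge \starttime{\ell} + \Omega(\log n)$ for every $\ell$, so Lemma~\ref{lem:crudeuniformboundary} applies at $i = \stoptime$ and yields exactly the claimed degree estimate. Since $\Pr(\primaryStop) = \Theta(\eps)$ is only polynomially small, the unconditional probability bound $1 - \exp(-n^{\Theta(1)})$ passes through to the conditional statement up to a negligible multiplicative factor.

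The hard part will be carrying out the lower bound on the final generation cleanly in the case of stopping condition~\eqref{primaryStopCond2}: one has to argue that a cumulative component size of $\lcompsize$ cannot have been accumulated without the last generation itself being of comparable order (up to a factor $\eps$), which requires applying Corollary~\ref{cor:expansionbound} uniformly throughout the process and summing the resulting geometric bounds. The hierarchy $\elbr \ll \eps$ is essential here, since it ensures that the $(1 \pm 2\elbr)$ fluctuation in the growth ratio does not disrupt the geometric-series approximation that identifies the last generation as the dominant contribution to $|\compone|$.
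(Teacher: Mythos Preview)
Your overall strategy---reduce to the smooth boundary lemma and then verify that the stopping time $\stoptime$ lies beyond the window $\smoothend{\ell}$ for every $\ell$---is exactly the paper's approach. However, there are two genuine gaps in the execution.

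The main gap is in your treatment of stopping condition~\eqref{primaryStopCond2}. You assert that Corollary~\ref{cor:expansionbound} gives a growth factor of approximately $(1+\eps)$ ``uniformly throughout the process'' and then run a geometric-series argument to conclude $|\generation{\stoptime}| \gtrsim \eps\lcompsize$. But Corollary~\ref{cor:expansionbound} only gives the tight $(1\pm 2\elbr)(1+\eps)$ ratio when $|\generation{i}| \ge n^{1-\delta}$; for smaller generations it provides \emph{no lower bound on growth at all}. Your geometric series therefore only controls the contribution to $|\compone|$ from generations after the boundary first exceeds $n^{1-\delta}$, and you must separately show that the contribution from the earlier small-boundary phase is $o(\lcompsize)$. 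This is precisely the content of Lemma~\ref{lem:smallstart} (case $\ell=0$), which the paper explicitly flags as ``surprisingly tricky'' and proves via a moment-generating-function tail bound for the lower-coupling branching process (Lemma~\ref{lem:getbig}). Without it, nothing in your argument rules out a scenario in which the boundary lingers below $n^{1-\delta}$ for enough generations to accumulate $\lcompsize$ $j$-sets before ever growing large.

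There is also a secondary constant-factor issue in your second step. Even granting $|\generation{\stoptime}| \ge n^{j-1+2\delta}$, using the crude factor-$2$ growth bound yields only $\stoptime - \starttime{j-1} \gtrsim \delta\log_2 n$ (note also that $|\generation{\starttime{\ell}}|$ is $\approx n^{\ell+\delta}$, not $n^{\ell}$). The implicit constant hiding in the $\Theta(\log n)$ of Lemma~\ref{lem:crudeuniformboundary} is $\smoothtime{j-1} = \log n / (-\log \nbrhdbrconst{j-1})$ with $\nbrhdbrconst{j-1}\in(0,1)$ a fixed constant; for small $\delta$ and, say, $k=3$, $j=2$ (where $\nbrhdbrconst{1}=1/2$, so $\smoothtime{1}=\log_2 n$), your bound $\delta\log_2 n$ falls short. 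The paper avoids this by using the sharper $(1+2\eps)$ growth bound (valid once the boundary exceeds $n^{1-\delta}$), which together with Lemma~\ref{lem:smallstart} gives $\stoptime - \starttime{j-1} \ge \tfrac{\delta}{2}\eps^{-1}\log n \gg \smoothtime{j-1}$ (Lemma~\ref{lem:enoughsmoothingtime}).
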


We note that this lemma is considerably stronger than we would need for the proof of our main result (for which concentration within a constant multiplicative factor would be sufficient). However, since the result is also interesting in itself, we in fact prove a result which is even stronger than Lemma~\ref{lem:smoothstop}.

Consider the process $\ubfs$ starting from an arbitrary $j$-set $J$. Let $\starttime{\ell}$  be the least $i$ such that $|\generationarb{i}| \ge n^{\ell + \delta}$. For each $\ell = 1,\ldots,j-1$, let $\nbrhdbrconst{\ell}:=\frac{\binom{k-\ell}{j-\ell}-1}{\binom{k}{j}-1}$, let $\smoothtime{\ell} := \left\lceil \frac{(j-\ell) \log n}{-\log ((1+2\co + 2\eps)\nbrhdbrconst{\ell})}\right\rceil$, and let $\smoothend{\ell}:= \starttime{\ell}+ \smoothtime{\ell}$.

\begin{lemma}[Smooth boundary lemma]\label{lem:uniformboundary} Let $\eps,p$ be as in Theorem~\ref{thm:main}~\eqref{thm:main:supercrit}.
With probability at least $1-\expprobtwo$, using $\ubfs(J)$
, for every $J,\ell,L,i$ such that
\begin{itemize}
\item $J$ is a $j$-set of vertices;
\item $0 \le \ell \le j-1$;
\item $L$ is an $\ell$-set of vertices;
\item $\smoothend{\ell} \le i \le \stoptime$
\end{itemize}
the following holds:
$$
d_L(\generationarb{i}) = (1\pm \smootherror{\ell})\frac{|\generationarb{i}|}{\binom{n}{j}}\binom{n}{j-\ell}.
$$
\end{lemma}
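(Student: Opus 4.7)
My plan is to proceed by induction on $\ell$. The base case $\ell=0$ is trivial, since $d_\emptyset(\generationarb{i}) = |\generationarb{i}|$ already matches the target. For the inductive step, fix $\ell \ge 1$ and an $\ell$-set $L$, and assume the lemma has been established for all $\ell' < \ell$ (and every $\ell'$-subset of $L$). The strategy is a one-step conditional recursion for $d_L(\generationarb{i+1})$ in terms of $d_L(\generationarb{i})$ plus smooth lower-order quantities, iterated over the $\smoothtime{\ell}$ generations between $\starttime{\ell}$ and $\smoothend{\ell}$.

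Condition on the history $\mathcal{F}_i$ of the BFS up to generation $i$ and decompose $\generationarb{i}$ according to $m := |J' \cap L|$. A routine count shows the conditional expected number of children of a parent $J'$ that contain $L$ equals $\nbrhdbrconst{\ell}(1+\eps) + o(1)$ when $m = \ell$ and $\binom{n-j-\ell+m}{k-j-\ell+m}\binom{k-\ell}{j-\ell}p = \Theta(n^{m-\ell})$ when $m < \ell$. M\"obius inversion expresses $\sigma_m(i, L) := \#\{J' \in \generationarb{i} : |J' \cap L| = m\}$ as a signed linear combination of $d_{L'}(\generationarb{i})$ for $L' \subseteq L$, whose lower-order contributions $(|L'| < \ell)$ are smooth by the inductive hypothesis. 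Collecting terms gives
\[
\EE\bigl[d_L(\generationarb{i+1}) \mid \mathcal{F}_i\bigr] = \nbrhdbrconst{\ell}(1+\eps)\, d_L(\generationarb{i}) + (1-\nbrhdbrconst{\ell})(1+\eps) \frac{|\generationarb{i}|}{\binom{n}{j}}\binom{n}{j-\ell}\bigl(1 + O(\smootherror{\ell-1})\bigr),
\]
and a conditional application of Theorem~\ref{thm:chernoff} controls $d_L(\generationarb{i+1})$ around this expectation up to failure probability $\expprob$.

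Passing to the normalised centred error $q_L(i) := d_L(\generationarb{i})/|\generationarb{i}| - \binom{n}{j-\ell}/\binom{n}{j}$, and using $|\generationarb{i+1}| = (1 \pm 2\elbr)(1+\eps)|\generationarb{i}|$ from Lemma~\ref{lem:expansionbound}, the recursion rewrites as
\[
|q_L(i+1)| \le \nbrhdbrconst{\ell}\bigl(1 + O(\co)\bigr)\,|q_L(i)| + O\bigl(\smootherror{\ell-1}/n^\ell\bigr).
\]
Since $\nbrhdbrconst{\ell} < 1$ for $\ell \ge 1$, this is a genuine contraction. The decisive input for the initial error is Lemma~\ref{lem:crudemaxdeg}: at time $\starttime{\ell}$ it gives $d_L(\generationarb{\starttime{\ell}}) \le \Delta_\ell(\disc) = O(n^\delta/\eps)$, hence $|q_L(\starttime{\ell})| = O(1/(\eps n^\ell))$; the crude bound $|q_L| \le 1$ would be far too weak. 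By the definition of $\smoothtime{\ell}$ we have $\nbrhdbrconst{\ell}^{\smoothtime{\ell}} \le n^{-(j-\ell)+o(1)}$, so after $\smoothtime{\ell}$ iterations the initial contribution to $|q_L(\smoothend{\ell})|$ is $O(n^{-j+o(1)}/\eps) = o(1/n^\ell)$, using $\eps \gg n^{-1/2+\delta}$ and $j - \ell \ge 1$. The accumulated driving error is $O(\smootherror{\ell-1}/(1-\nbrhdbrconst{\ell})) = O(\smootherror{\ell-1}) \le \smootherror{\ell}/2$, which together with the homogeneous decay yields $|q_L(\smoothend{\ell})| \le \smootherror{\ell} \binom{n}{j-\ell}/\binom{n}{j}$; the same recursion shows the error stays in this regime for all $i \le \stoptime$.

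The main obstacle is making the one-step conditional analysis rigorous in the presence of BFS dependencies: children of a parent $J'$ must be still-neutral at the moment of query, and the processing order of parents affects this. I would sandwich $d_L(\generationarb{i+1})$ between the upper coupling $\ubr$ (which permits over-counting) and the lower coupling $\lbr$ combined with Lemma~\ref{lem:crudemaxdeg}, so the discrepancy contributes at most $O(\alpha n^{j-\ell}) \ll \smootherror{\ell}|\generationarb{i}|/n^\ell$ and is absorbed into the error term. A secondary subtlety is chaining the induction across levels: level-$\ell$ smoothness from generation $\starttime{\ell}$ onwards needs level-$m$ smoothness from that point on for every $m < \ell$, and this is consistent because $\starttime{\ell} - \smoothend{m} = \Theta(\eps^{-1}(\ell - m)\log n) - O(\log n) \gg 0$, so lower levels have already converged by the time we start smoothing level $\ell$. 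A final union bound over the $\mathrm{poly}(n)$-many tuples $(J, \ell, L, i)$ absorbs the per-event failure probability $\expprob$ into $\expprobtwo$.
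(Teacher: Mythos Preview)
Your approach is essentially the same as the paper's: induction on $\ell$, a one-step decomposition of $d_L(\generationarb{i+1})$ into ``jumps'' (from $j$-sets not containing $L$, controlled via the inductive hypothesis) and ``neighbourhood branchings'' (from $j$-sets containing $L$, contracting by $\nbrhdbrconst{\ell}$), Chernoff-type concentration, and iteration over $\smoothtime{\ell}$ generations to kill the initial error. Your M\"obius-inversion bookkeeping and the passage to the normalised error $q_L$ are cosmetic repackagings of the paper's direct computation in raw degrees.

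The one substantive difference is your handling of the initial error, and here there is a small gap. You invoke Lemma~\ref{lem:crudemaxdeg} to obtain $d_L(\generationarb{\starttime{\ell}}) = O(n^\delta/\eps)$, but that lemma requires a bound on the number of queries made by generation $\starttime{\ell}$, hence on the component size at that point --- something you do not supply and which is not immediate (compare the paper's Lemma~\ref{lem:smallstart} and its nontrivial proof). More to the point, this detour is unnecessary. The paper simply uses the trivial bound $d_0 \le n^{j-\ell}$ (the total number of $j$-sets containing $L$) and observes that $((1+2\co+2\eps)\nbrhdbrconst{\ell})^{\smoothtime{\ell}} n^{j-\ell} \le 1$ by the very definition of $\smoothtime{\ell}$, so the initial term iterates down to at most $1$, which is negligible against the target degree $\ge n^\delta$. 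Your claim that ``the crude bound $|q_L|\le 1$ would be far too weak'' conflates two different crude bounds: in your normalised variables, the bound $d_0 \le n^{j-\ell}$ translates to $|q_L(\starttime{\ell})| \le n^{j-2\ell-\delta}$ (since $|\generationarb{\starttime{\ell}}| \ge n^{\ell+\delta}$), and after $\smoothtime{\ell}$ contractions by roughly $\nbrhdbrconst{\ell}$ this becomes $O(n^{-\ell-\delta}) = o(\smootherror{\ell}/n^\ell)$, which is exactly what you need. The fix is simply to drop the appeal to Lemma~\ref{lem:crudemaxdeg} for the initial bound and use this trivial estimate instead.
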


The intuition behind the various generations which we have defined is as follows: In order for various concentration results to be valid, we will need the average degree of $\ell$-sets to be reasonably large, so $\starttime{\ell}$ is a ``starting time''. Prior to this time we have no information about the degrees of $\ell$-sets, so they may be very non-smooth. Over time, though, any disparity will tend to even itself out, and $\smoothtime{\ell}$ is the time it takes for this process to be complete. From that time $\smoothend{\ell}$ on, the degrees of $\ell$-sets should be smooth.

\begin{remark}
Note that at time $\starttime{\ell}$ we have $\left|\generationarb{\starttime{\ell}}\right| \ge n^{\ell+\delta} \ge n^{1-\delta}$, for $\ell \ge 1$. This means that, by Corollary~\ref{cor:expansionbound} the size of the generations will never decrease. This will be important for various concentration results.
\end{remark}

Lemma~\ref{lem:smoothstop} is almost an immediate corollary of Lemma~\ref{lem:uniformboundary} --  the only issue is that we need to know that $\stoptime \ge \smoothend{\ell}$ for every $\ell=1,\ldots,j-1$, i.e. that we do not reach the stopping conditions before the boundary is smooth. This turns out to be a surprisingly tricky fact to prove, but it will be shown in Section~\ref{sec:smoothstop}.

We will prove Lemma~\ref{lem:uniformboundary} in Section~\ref{sec:uniformboundary}. First, we show how to use Lemma~\ref{lem:smoothstop} to complete the proof of Theorem~\ref{thm:main}.

Conditioned on $\primaryStop$ and $\left\{\subrtwo<\infty\right\},$ we need to analyse the event $\falseNeg$ of $J_2$ being a (potential) \emph{false negative}, i.e.\ $\comptwo$ contains fewer than $\lcompsize$ $j$-sets but the component of $J_2$ in $\cH^k(n,p)$ is larger than $\comptwo$. (In fact, a genuine false negative would require the component to be larger than $\lcompsize$, but bounding the probability of this weaker event will be sufficient.)

We know, by Remark~\ref{rem:overshoot}, that the boundary of $\compone$ has size at most $2\lcompconst \lcompsize$ and each $\ell$-set is contained in $O(\lcompconst \lcompsize /n^\ell)$ sets of the boundary, by Lemma~\ref{lem:smoothstop}. Thus for a $j$-set of $\comptwo$, there are $$\sum_{\ell=0}^{j-1}O(\lcompconst \lcompsize/n^{\ell}) O(n^{k-2j+\ell}) = O(\lcompconst \lcompsize n^{k-2j})$$ $k$-sets which we did not query because they contained $j$-sets of $\compone$. Therefore, if we assume $\comptwo$ contains precisely $r\in\mathbb{N}$ $j$-sets, the expected number of edges within these disallowed $k$-sets is
$$
O\left(\lcompconst \lcompsize n^{k-2j}rp\right)=O\left(\lcompconst^2r\right) 
$$
and thus the probability that we have overlooked at least one edge is $O\left(\lcompconst^2r\right)$. Hence, by the law of total probability we obtain 
\begin{align}
\Pr\left(\falseNeg\cond \primaryStop\wedge\left\{\subrtwo<\infty\right\}\right)&=O\left(\lcompconst^2\sum_{r=0}^{\infty}r\Pr\left(\scomptwo=r\cond \primaryStop\wedge \left\{\subrtwo<\infty\right\}\right)\right)\nonumber\\
&=O\left(\lcompconst^2\EE\left(\scomptwo\cond \primaryStop\wedge \left\{\subrtwo<\infty\right\}\right)\right)\nonumber\\
&=o\left(\eps^2\EE\left(\subrtwo\cond \primaryStop\wedge\left\{\subrtwo<\infty\right\}\right)\right)\nonumber\\
&\stackrel{\ubrtwo\text{ indep.\ of }\primaryStop}{=}o\left(\eps^2\EE\left(\subrtwo\cond \subrtwo<\infty\right)\right) \nonumber\\
&=o\left(\eps^2\EE\left(\sdualbr\right)\right) \nonumber\\
&\stackrel{\eqref{expDual}}{=}o(\eps)
\, .
\label{probFalseNeg}
\end{align}
This solves the main difficulty (apart from the proofs of Lemmas~\ref{lem:uniformboundary} and~\ref{lem:smoothstop}). 

Since we only need an upper bound for the probability that the component of $J_2$ in $\cH^k(n,p)$ is large, we do not need to care about \emph{false positives}. Therefore we consider $J_2$ to be large in the following three cases: 
\begin{itemize}
\item $\ubrtwo$ survives;
\item $\ubrtwo$ dies out and $J_2$ is a false negative;
\item $\ubrtwo$ dies out, $J_2$ is not a false negative and its component in $\cH^k(n,p)$ is large.
\end{itemize} 
Observe that every $j$-set $J_2\in\lcomps$ will satisfy exactly one of these conditions.

 Still assuming that $\primaryStop$ holds, we calculate the probabilities of these events. For the first case we obtain
 \begin{equation}
 \Pr\left(\subrtwo=\infty\cond\primaryStop\right)=\Pr\left(\subrtwo=\infty\right)\stackrel{\eqref{survival1}}{\sim}\frac{2\eps}{\cconst}\, ,
 \label{probCase1}
 \end{equation}
 since the branching process $\ubrtwo$ is independent of $\primaryStop.$ Moreover,  estimate~\eqref{probFalseNeg} immediately shows
 \begin{equation}
  \Pr\left(\left\{\subrtwo<\infty\right\}\wedge\falseNeg\cond \primaryStop\right)\leq \Pr\left(\falseNeg\cond \primaryStop\wedge\left\{\subrtwo<\infty\right\}\right)\stackrel{\eqref{probFalseNeg}}{=}o(\eps)\, .
  \label{probCase2}
 \end{equation} 
  For the last case let us first note that
$$\left(\neg \falseNeg \wedge J_2\in\lcomps\right) \implies \left\{\scomptwo\geq \lcompsize\right\}\implies \left\{\subrtwo\geq \lcompsize\right\} $$
and thus we have
\begin{align}
 \Pr\left(\left\{\subrtwo<\infty\right\}\wedge\neg \falseNeg\wedge\left\{J_2\in\lcomps\right\}\cond\primaryStop\right)&\leq \Pr\left(\lcompsize\leq\subrtwo<\infty\cond\primaryStop\right)\nonumber\\
 &\stackrel{\ubrtwo\text{ indep.\ of }\primaryStop}{=}\Pr\left(\lcompsize\leq\subrtwo<\infty\right)\nonumber\\
  &\le\Pr\left(\subrtwo\geq\lcompsize\cond \subrtwo<\infty\right)\nonumber\\
  &=\Pr\left(\sdualbr\geq\lcompsize\right)\nonumber\\
  &\stackrel{\eqref{expDual}}{\le}(1+o(1))\eps^{-1}/\lcompsize=o(\eps)\, ,
 \label{probCase3}
 \end{align}
 by Markov's Inequality and since $\eps^2\lcompsize\to\infty.$ Consequently, by estimates \eqref{probCase1}, \eqref{probCase2} and \eqref{probCase3}, we obtain
\begin{align}
\Pr\left(J_2\in\lcomps\cond \primaryStop\right)\leq \frac{2\eps}{\cconst}+o(\eps)\, .
\label{probLargeafterPrimaryStop}
\end{align}
This yields a good enough upper bound on the second moment:
\begin{align*}
\EE\left(X^2\right)&\stackrel{\eqref{secondMomentafterPrimaryStop2}}{\leq} (1+o(1)) \binom{n}{j}\frac{2\eps}{\cconst}\sum_{J_2\in\binom{V}{j}\setminus \compone}\Pr\left(J_2\in\lcomps\cond \primaryStop\right)\\
&\stackrel{\eqref{probLargeafterPrimaryStop}}{\leq} (1+o(1))\left(\frac{2\eps}{\cconst}\binom{n}{j}\right)^2\\
&\stackrel{\eqref{firstMoment}}{=}(1+o(1))\EE\left(X\right)^2\, ,
\end{align*}
and therefore we have for the variance
$$\Var\left(X\right)= \EE(X^2)-\EE (X)^2 = o(\EE (X)^2)\,.  $$
Hence Chebyshev's Inequality tells us that for any constant $\zeta>0$, 
$$\Pr (X \neq (1\pm \zeta)\EE(X)) \le \frac{2\, \Var(X)}{\zeta^2\, \EE(X)^2} = o(1/\zeta^2)=o(1),$$ and so whp 
$$X=(1\pm o(1))\EE(X)=(1\pm o(1))\frac{2\eps}{\binom{k}{j}-1}\binom{n}{j}.$$

We have thus shown that the number of $j$-sets in large components is approximately as expected

\subsection{Sprinkling}

However, we also need to know that they all (or at least almost all) $j$-sets of $\lcomps$ lie in the same component. To prove this, we use a standard sprinkling argument.

We first borrow a lemma from~\cite{CoKaPe14}, which states that we may pick out a subsequence of queries  and treat it like an interval of the search process. To this end we denote by $X_t$, for each $t=1,\dots,\binom{n}{k}$, the indicator random variable associated to the $t$-th query of the search process. We will be considering a random subsequence $t_1,t_2,\ldots,t_s$ from $[\binom{n}{k}]$. We say ``$t_i$ is determined by the values of $X_1,\ldots,X_{t_i-1}$'' to mean the following: For any $j$, whether the event $\{t_i=j\}$ holds is determined by the values of $X_1,\ldots,X_{j-1}$. In particular this means that $t_i$ is chosen before $X_{t_i}$ is revealed.

\begin{lemma}\label{lem:subsequence}
Let $S =(t_1,t_2,\ldots,t_s)$ be a (random, ordered) index set chosen according to some criterion such that
\begin{itemize}
\item $t_i$ is determined by the values of $X_1,\ldots,X_{t_i-1}$;
\item with probability $1$ we have $1\le t_1 < t_2 < \ldots < t_s \le \binom{n}{k}$.
\end{itemize}
Then $(X_{t_1},\ldots,X_{t_s})$ is distributed as $(Y_1,\ldots,Y_s)$, where $Y_1,\ldots,Y_s$ are independent Be$(p)$ variables. In particular, we may apply a Chernoff bound to $\sum_{i \in S}X_i$.
\end{lemma}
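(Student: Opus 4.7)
The plan is to prove this by induction on $s$, the length of the subsequence (the base case $s=0$ being vacuous). Fix $(b_1,\ldots,b_s)\in\{0,1\}^s$; the goal is to show
\begin{equation*}
\Pr(X_{t_i}=b_i \text{ for all } i\le s) \;=\; \prod_{i=1}^{s} p^{b_i}(1-p)^{1-b_i}.
\end{equation*}
I would start by decomposing via the law of total probability over the realisations of the stopping times:
\begin{equation*}
\Pr(X_{t_i}=b_i \;\forall i) \;=\; \sum_{1\le j_1<\cdots<j_s\le \binom{n}{k}} \Pr\!\left(\bigcap_{i=1}^{s}\{t_i=j_i\}\cap\{X_{j_i}=b_i\}\right).
\end{equation*}

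The crux is a measurability observation. The hypothesis that ``$t_i$ is determined by $X_1,\ldots,X_{t_i-1}$'' is exactly the classical stopping-time condition $\{t_i=j_i\}\in\sigma(X_1,\ldots,X_{j_i-1})$. For each fixed tuple $(j_1,\ldots,j_s)$, it follows that
\begin{equation*}
E \;:=\; \bigcap_{i=1}^{s}\{t_i=j_i\}\;\cap\;\bigcap_{i=1}^{s-1}\{X_{j_i}=b_i\}
\end{equation*}
belongs to $\sigma(X_1,\ldots,X_{j_s-1})$, using $j_{s-1}<j_s$; hence $E$ is independent of $X_{j_s}$ and we obtain $\Pr(E\cap\{X_{j_s}=b_s\}) = \Pr(E)\cdot p^{b_s}(1-p)^{1-b_s}$.

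Now I would factor $p^{b_s}(1-p)^{1-b_s}$ out of the sum and regroup. For each fixed prefix $(j_1,\ldots,j_{s-1})$, the second hypothesis of the lemma guarantees that with probability $1$ all $s$ stopping times exist with $j_{s-1}<t_s\le\binom{n}{k}$, so the events $\{t_s=j_s\}_{j_s>j_{s-1}}$ partition the event $\bigcap_{i<s}\{t_i=j_i\}\cap\{X_{j_i}=b_i\}$ (up to a null set). Summing over $j_s$ therefore eliminates the constraint on $t_s$, and a subsequent sum over $(j_1,\ldots,j_{s-1})$ yields exactly $\Pr(X_{t_i}=b_i \;\forall i\le s-1)$. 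By the inductive hypothesis this is $\prod_{i<s}p^{b_i}(1-p)^{1-b_i}$, and multiplying by the earlier factor $p^{b_s}(1-p)^{1-b_s}$ completes the induction. The final Chernoff statement is then just an instance of Theorem~\ref{thm:chernoff} applied to the i.i.d.\ Be$(p)$ sequence $(X_{t_1},\ldots,X_{t_s})$.

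I expect no genuine obstacle here -- the argument is a standard stopping-time/optional-sampling computation for a product of Bernoullis. The only points where one must be careful are the filtration bookkeeping (confirming that the condition really yields $\{t_i=j_i\}\in\sigma(X_1,\ldots,X_{j_i-1})$, so that $X_{j_s}$ is independent of the rest) and the use of the a.s.\ existence of $t_s$ to justify eliminating the innermost sum. Both are routine but must be done explicitly because the lemma will later be invoked to treat highly non-trivial random subsequences of the search process.
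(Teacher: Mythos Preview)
Your argument is correct: this is precisely the standard optional-sampling computation for i.i.d.\ Bernoullis, and the measurability and partition steps you flag are handled correctly. Note, however, that the paper does not supply its own proof of this lemma at all --- it is quoted from~\cite{CoKaPe14} without argument --- so there is nothing to compare against; your write-up simply fills in what the paper takes as given.
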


This lemma applies in particular to subsequences given by the set of queries to $k$-sets containing a particular $\ell$-set while exploring any component, and we therefore have the following corollary.

\begin{corollary}\label{cor:compqueries}
With probability at least $1-\expprob$ for every $1\le \ell \le j-1$, for every $\ell$-set $L$ and for every component $\mathcal{C}$, if the number of queries from $j$-sets in $\mathcal{C}$ to a $k$-set containing $L$ is $x \ge n^{k-j+\delta}$, then  the number of edges in $\mathcal{C}$ containing $L$ is $(1\pm o(1))px$.
\end{corollary}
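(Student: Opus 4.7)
The plan is to apply Lemma~\ref{lem:subsequence} to the subsequence of BFS queries directed at $k$-sets containing $L$, and then combine a Chernoff bound with a union bound over sub-intervals of that subsequence, $\ell$-sets $L$, and values of $\ell$. I would first extend the BFS so that it explores every component: run $\ubfs$ from the lexicographically smallest $j$-set, and once its component is fully explored, restart from the lexicographically smallest still-neutral $j$-set, and repeat. Let $X_1,X_2,\ldots$ be the indicator variables for the resulting queries in the order they are made. Because each component is fully explored before the next is begun, the queries from $j$-sets in any component $\mathcal{C}$ occupy a contiguous index interval $[a_\mathcal{C}, b_\mathcal{C}]$.

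Now fix $1\le \ell\le j-1$ and an $\ell$-set $L$, and let $S_L$ be the (random) ordered set of times $t$ at which the $t$-th query is made to a $k$-set containing $L$. Since at each time step the BFS deterministically chooses its next query from its current state, and the state depends only on the outcomes $X_1,\ldots,X_{t-1}$ of previous queries, the set $S_L$ satisfies the hypothesis of Lemma~\ref{lem:subsequence}; I would invoke that lemma to conclude that $(X_t)_{t\in S_L}$ is an iid sequence of $\Be(p)$ variables. For each component $\mathcal{C}$, the intersection $I_{\mathcal{C},L}:=S_L \cap [a_\mathcal{C},b_\mathcal{C}]$ is a contiguous sub-interval of $S_L$, its length equals the quantity $x$ of the statement, and $\sum_{t\in I_{\mathcal{C},L}} X_t$ is exactly the number of edges in $\mathcal{C}$ containing $L$.

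To finish, I would union-bound over all pairs of positions $1\le a\le b$ within $S_L$ with $y:=b-a+1\ge n^{k-j+\delta}$, applying Theorem~\ref{thm:chernoff} to the corresponding iid Bernoulli sum with deviation $n^{-\delta/4}py$. Since $p=\Theta(n^{-(k-j)})$ gives $py\ge\Theta(n^\delta)$, each such failure probability is at most $2\exp(-\Theta(n^{\delta/2}))=\expprob$. Because each $k$-set is queried at most once we have $|S_L|\le \binom{n-\ell}{k-\ell}\le n^{k-\ell}$, yielding $O(n^{2(k-\ell)})$ candidate pairs; combined with at most $n^\ell$ choices of $L$ and $O(1)$ values of $\ell$, the union bound still gives total failure probability $\expprob$. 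On the complement, every $I_{\mathcal{C},L}$ of length $x\ge n^{k-j+\delta}$ satisfies $\sum X_t=(1\pm n^{-\delta/4})px=(1\pm o(1))px$, which is the desired conclusion. I do not foresee any real obstacle; the only mildly subtle point is that $I_{\mathcal{C},L}$ has random length, which is why one must union-bound over all potential pairs of positions in $S_L$ rather than apply Chernoff to a single sum.
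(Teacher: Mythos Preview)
Your proof is correct and follows essentially the same approach as the paper: invoke Lemma~\ref{lem:subsequence} on the subsequence of queries to $k$-sets containing $L$, apply a Chernoff bound, and take a union bound. Your union bound over all pairs of positions within $S_L$ is a little more careful than the paper's direct union bound over the (random) components $\mathcal{C}$, and your explicit choice of deviation $n^{-\delta/4}py$ cleanly yields the required $o(1)$, but the underlying idea is the same.
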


\begin{proof}
First fix $\ell$, $L$ and $\mathcal{C}$. By Lemma~\ref{lem:subsequence} we may apply a Chernoff bound to the set of queries to $k$-sets containing $L$ within $\mathcal{C}$. Then the probability that the number of edges in $\mathcal{C}$ containing $L$ is not in $(1\pm \zeta)px$, for some constant $\zeta >0$, is at most
\begin{align*}
2\exp \left(-\frac{(px\zeta)^2}{3px} \right) & = 2\exp \left( - px\zeta^2/3 \right)\\
& \ge 2\exp \left( -n^{\delta}\zeta^2/3 \right).
\end{align*}
We may now apply a union bound over all choices of $\ell$ and $L$ (of which there are at most $n^j$) and all choices of $\mathcal{C}$ (of which there are at most $n^j$) and deduce that the probability that any one of these choices goes wrong is at most
\[
2n^{2j}\exp \left( -\zeta^2 n^{\delta}/3\delta^2 \right) \le \exp \left( -n^{\delta/2} \right)
\]
as required.
\end{proof}

\begin{claim}\label{claim:fullshadow}
For each $0\le \ell \le j-1$, whp any set $L$ of $\ell$ vertices is contained in at least $\Theta (\lcompconst n^{j-\ell})$ $j$-sets of any component of size at least $\lcompsize$.
\end{claim}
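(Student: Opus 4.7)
The case $\ell=0$ is immediate: the empty set is contained in every $j$-set of the component, and by assumption there are at least $\lcompsize=\lcompconst n^j$ of these. For $1\le\ell\le j-1$ the plan is to reduce the claim directly to the smooth boundary lemma (Lemma~\ref{lem:uniformboundary}): once the exploration has reached generation $\smoothend{\ell}$, each boundary $\generationarb{i}$ is smooth with respect to $\ell$-sets in the sense that $d_L(\generationarb{i})=(1\pm\smootherror{\ell})\frac{|\generationarb{i}|}{\binom{n}{j}}\binom{n}{j-\ell}$. Summing this estimate across all such generations should then transfer the smoothness of each individual boundary to the whole component, since the component is the disjoint union of its generations.

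Concretely I would fix a component $C$ of size at least $\lcompsize$, an $\ell$-set $L$, and an arbitrary $j$-set $J\in C$, and run $\ubfs(J)$ until $C$ is fully explored. Summing the lower bound of Lemma~\ref{lem:uniformboundary} over all $\smoothend{\ell}\le i\le\stoptime$ gives
\[
d_L(C) \;\ge\; (1-\smootherror{\ell})\,\frac{\binom{n}{j-\ell}}{\binom{n}{j}}\,\Bigl(|C|-\sum_{i<\smoothend{\ell}}|\generationarb{i}|\Bigr),
\]
so the key step is to show that the pre-smoothness tail $\sum_{i<\smoothend{\ell}}|\generationarb{i}|$ is $o(|C|)$. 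By Corollary~\ref{cor:expansionbound}, generations before $\starttime{\ell}$ have size less than $n^{\ell+\delta}$ and grow by a factor $1+O(\eps)$ once they exceed $n^{1-\delta}$, so a geometric-series bound yields a pre-$\starttime{\ell}$ sum of $O(n^{\ell+\delta}/\eps)$; the $\smoothtime{\ell}=O(\log n)$ further generations up to $\smoothend{\ell}$ grow the boundary by at most $(1+O(\eps))^{\smoothtime{\ell}}=n^{o(1)}$, contributing at most $n^{\ell+O(\delta)}$ more. Under the hierarchy $\lcompconst,\eps\gg n^{-1/2+\delta}$ one has $\eps\lcompconst\gg n^{-1+2\delta}\gg n^{\ell-j+\delta}$ for every $\ell\le j-1$, so the whole tail is $o(\lcompconst n^j)=o(|C|)$, and the display gives $d_L(C)=\Theta(\lcompconst n^{j-\ell})$.

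The main obstacle is entirely the smoothness estimate of Lemma~\ref{lem:uniformboundary}, which is the technical heart of the paper and is proved later; conditional on it the present claim reduces to the accounting above, the only subtlety being the hierarchy-based verification that the pre-smoothness tail cannot swamp the contribution of the smooth generations. One finally takes a union bound over the at most $n^{j-1}$ choices of $L$ and the at most $\binom{n}{j}$ starting $j$-sets $J$ (one per large component), whose polynomial cost is easily absorbed by the $\expprobtwo$ failure probability of Lemma~\ref{lem:uniformboundary}.
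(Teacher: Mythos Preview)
Your route is genuinely different from the paper's. The paper gives a short, self-contained induction on $\ell$: from an $(\ell-1)$-subset $L'\subset L$ it uses the inductive hypothesis to find many $j$-sets of the component containing $L'$, counts the resulting queries to $k$-sets containing $L$, and applies Corollary~\ref{cor:compqueries} (a Chernoff bound on a subsequence of queries) to lower-bound the number of edges, hence $j$-sets, of the component containing $L$. No smoothness is invoked; indeed the paper remarks immediately afterwards that a stronger statement does follow from Lemma~\ref{lem:uniformboundary} together with Lemma~\ref{lem:smallstart}, but that it prefers the elementary argument precisely to avoid relying on that machinery.

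Your sketch, which is essentially that alternative, has two gaps as written. First, Lemma~\ref{lem:uniformboundary} is stated only for $i\le\stoptime$, and with the hierarchy $\lcompconst\ll\eps$ the stopping condition~\eqref{primaryStopCond3} typically fires before~\eqref{primaryStopCond2}: once the boundary grows like $(1+\eps)$ per generation, it reaches $\lcompconst^2 n^j$ when the accumulated component has order $\lcompconst^2 n^j/\eps=o(\lcompconst n^j)$. So summing smooth boundaries over $\smoothend{\ell}\le i\le\stoptime$ yields only $d_L\ge\Theta(\lcompconst^2 n^{j-\ell}/\eps)$, strictly weaker than the claim; you would need to drop~\eqref{primaryStopCond3} and check that the couplings underlying Lemma~\ref{lem:uniformboundary} persist until~\eqref{primaryStopCond2} fires. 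Second, your ``geometric-series'' bound on the pre-$\starttime{\ell}$ tail is not justified by Corollary~\ref{cor:expansionbound} alone: that corollary gives $(1+O(\eps))$-growth only once $|\gen{i}|\ge n^{1-\delta}$, and below that threshold provides merely $|\gen{i+1}|\le 2\max(|\gen{i}|,n^\delta)$, which does not preclude the process lingering at small sizes for many generations. Controlling this early phase is exactly the content of Lemma~\ref{lem:smallstart}, whose proof (via the branching-process estimate Lemma~\ref{lem:getbig}) is non-trivial and should be invoked explicitly rather than subsumed into a one-line estimate.
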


\begin{proof}
We assume that the high probability event from Corollary~\ref{cor:compqueries} holds.
Consider a component $\mathcal{C}$ with $\beta n^j$ $j$-sets, where $\beta \ge \lcompconst$. We prove by induction on $\ell$ that any $\ell$-set is contained in at least $\left(2\binom{k}{j}^2\right)^{-\ell}\beta n^{j-\ell}$ $j$-sets of $\mathcal{C}$. The case $\ell=0$ simply reasserts the fact that $\mathcal{C}$ has size $\beta n^j$, so assume $\ell\ge 1$.

Now let $L'\subseteq L$ be a set of $\ell-1$ vertices, which by the inductive hypothesis lies in $\left(2\binom{k}{j}^2\right)^{1-\ell}\beta n^{j-\ell+1}$ $j$-sets of $\mathcal{C}$. Then for each such $j$-set $J$ there are at least $\binom{n-j-1}{k-j-1}$ $k$-sets containing $J\cup L$ (more if $L\subseteq J$), and each of these $k$-sets will be counted in this way at most $\binom{k}{j}$ times. Thus the number of queries from $j$-sets in $\mathcal{C}$ to a $k$-set containing $L$ is at least
\begin{align*}
\frac{\binom{n-j-1}{k-j-1}\beta n^{j-\ell +1}}{\left(2\binom{k}{j}^2\right)^{\ell-1}\binom{k}{j}} & \ge (1-o(1))\frac{\beta n^{k-\ell}}{2^{\ell-1}\binom{k}{j}^{2\ell-1}(k-j-1)!}.
\end{align*}
Since $\beta n^{k-\ell} \ge \lcompconst n^{k-j+1} \ge n^{k-j+\delta}$, we may apply Corollary~\ref{cor:compqueries} and thus the number of edges we discover is at least
\begin{align*}
(1-o(1))p \frac{\beta n^{k-\ell}}{2^{\ell-1}\binom{k}{j}^{2\ell-1}(k-j-1)!} & \ge \frac{(k-j)!n^{j-k}}{\binom{k}{j}-1} \cdot \frac{\beta n^{k-\ell}}{2^{\ell}\binom{k}{j}^{2\ell-1}(k-j-1)!}\\
& \ge \frac{\beta n^{j-\ell}}{2^{\ell}\binom{k}{j}^{2\ell}}.
\end{align*}
Since $L$ and $\mathcal{C}$ were arbitrary, this proves the inductive step.
\end{proof}

Note that a substantially stronger version of Claim~\ref{claim:fullshadow} follows (with a little care) from the smooth boundary lemma and our arguments in the remainder of this paper (particularly Lemma~\ref{lem:smallstart}). We give this proof here because it is much more elementary and does not rely on the heavy machinery of the smooth boundary lemma.

\begin{claim}\label{claim:uniquegiant}
$Whp$, $\cH^k(n,p)$ has a giant component of size $(1\pm o(1))\frac{2\eps}{\binom{k}{j}-1}\binom{n}{j}$.\linebreak[4] In particular all other components have size $o(\eps n^j)$.
\end{claim}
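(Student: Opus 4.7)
The plan is to reduce Claim~\ref{claim:uniquegiant} to the concentration of $X$ already proved above by a standard two-round (sprinkling) argument. Since whp the total size of all $j$-components of $\cH^k(n,p)$ of size at least $\lcompsize=\lcompconst n^j$ is $X=(1\pm o(1))\frac{2\eps}{\cconst}\binom{n}{j}$, it suffices to show that whp these $j$-sets all lie in a single $j$-component. Fix an arbitrarily small constant $\mu>0$, put $\eps_1:=(1-\mu)\eps$ and $p_1:=(1+\eps_1)p_0$, and choose $p_2$ so that $(1-p_1)(1-p_2)=1-p$; then $p_2=\Theta(\mu\eps p_0)$, and $\cH^k(n,p)$ has the same distribution as the union of two independent copies $\cH^k(n,p_1)\cup\cH^k(n,p_2)$. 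Because $\eps_1$ still satisfies the hypotheses of Theorem~\ref{thm:main}~\eqref{thm:main:supercrit}, the first- and second-moment analysis already carried out, applied instead to $\cH^k(n,p_1)$, yields whp $X_1\ge(1-\mu-o(1))\frac{2\eps}{\cconst}\binom{n}{j}$, where $X_1$ is the number of $j$-sets in large components of $\cH^k(n,p_1)$. In particular $\cH^k(n,p_1)$ has at most $X_1/\lcompsize=O(\eps/\lcompconst)$ large components.

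The heart of the argument, and the place where the hypergraph-specific machinery of the paper is really used, is the merging step. Fix two large components $C,C'$ of $\cH^k(n,p_1)$ and set $s_0:=\max(0,2j-k)$. Claim~\ref{claim:fullshadow} guarantees that every $s_0$-set of vertices lies in $\Theta(\lcompconst n^{j-s_0})$ $j$-sets of each of $C$ and $C'$. Counting ordered pairs $(J_1,J_2)$ with $J_1\in C$, $J_2\in C'$ and $|J_1\cap J_2|=s_0$, and then for each such pair the $\Theta(n^{k-2j+s_0})$ $k$-sets containing $J_1\cup J_2$, a short calculation (after dividing by an $O(1)$ overcount) shows that the number of $k$-sets containing some $j$-set of $C$ and some $j$-set of $C'$ is $\Omega(\lcompconst^2 n^k)$. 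Such a $k$-set being an edge of $\cH^k(n,p_2)$ merges $C$ and $C'$ in $\cH^k(n,p)$, and since $\cH^k(n,p_2)$ is independent of $\cH^k(n,p_1)$ the probability that none of these $k$-sets is an edge of $\cH^k(n,p_2)$ is at most $\exp(-\Omega(\lcompconst^2 n^k p_2))=\exp(-\Omega(\mu\eps\lcompconst^2 n^j))$. By the hierarchy $n^{-1/2+\delta},n^{-j/3}\ll\lcompconst$, this decays super-polynomially in $n$, so a union bound over the $O(\eps^2/\lcompconst^2)$ pairs of large components shows that whp every such pair is merged. Consequently, all large components of $\cH^k(n,p_1)$ lie in a common $j$-component $G$ of $\cH^k(n,p)$, whence $|G|\ge X_1$.

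Combining the two bounds gives $(1-\mu-o(1))\frac{2\eps}{\cconst}\binom{n}{j}\le|G|\le X=(1+o(1))\frac{2\eps}{\cconst}\binom{n}{j}$; since $\mu$ was an arbitrary positive constant (or, formally, can be taken as $\mu(n)\to 0$ slowly enough that $\mu\eps\lcompconst^2 n^j\to\infty$), we conclude $|G|=(1\pm o(1))\frac{2\eps}{\cconst}\binom{n}{j}$. Every other $j$-component of $\cH^k(n,p)$ has either size below $\lcompsize=o(\eps n^j)$, or size at least $\lcompsize$ but contributing to $X-|G|=o(\eps n^j)$; in either case its size is $o(\eps n^j)$. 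The main obstacle I anticipate is the merging count above: pairs of $j$-sets with different intersection sizes contribute vastly different numbers of $k$-sets, so a naive enumeration fails, and it is Claim~\ref{claim:fullshadow} (which pins down how a large component uses up the $s_0$-sets of $V$) that makes this final sprinkling step work uniformly in $j$ and $k$.
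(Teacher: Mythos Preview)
Your argument is correct and follows the same sprinkling strategy as the paper (two rounds of exposure, then use Claim~\ref{claim:fullshadow} to show that any two large components of the first round merge in the second), but the merging count is implemented differently. The paper takes $p_2=(\log n)^2/(\lcompconst^2 n^{k-j+1})$, fixes a \emph{single} $(j-1)$-set $J'$, and passes to its $(k-j+1)$-uniform link hypergraph: by Claim~\ref{claim:fullshadow} with $\ell=j-1$, each large component contributes $\Omega(\lcompconst n)$ vertices to this link, so there are $\Omega(\lcompconst^2 n^{k-j+1})$ merging $(k-j+1)$-sets through $J'$, and the non-merge probability comes out to $\exp(-(\log n)^{3/2})$ --- a clean reduction to a vertex-connectivity statement that sidesteps any intersection-size bookkeeping. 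You instead keep $p_2=\Theta(\mu\eps p_0)$ and directly count pairs $(J_1,J_2)\in C\times C'$ with $|J_1\cap J_2|=s_0=\max(0,2j-k)$, invoking Claim~\ref{claim:fullshadow} at level $\ell=s_0$; this yields the larger global count $\Omega(\lcompconst^2 n^k)$ of merging $k$-sets and non-merge probability $\exp(-\Omega(\mu\eps\lcompconst^2 n^j))$, at the cost of carrying the auxiliary constant $\mu$ through to the end. Both routes are valid; the paper's link trick is slicker, while yours is more hands-on but perfectly sound once one checks (as you indicate) that pairs with $|J_1\cap J_2|>s_0$ form a negligible fraction and that the $O(1)$ overcount per $k$-set is harmless. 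One small caveat: your assertion that the failure probability ``decays super-polynomially in $n$'' needs $\eps\lcompconst^2 n^j\gg\log n$, which is not automatic from the stated hierarchy alone but is easily arranged by choosing $\lcompconst$ close enough to $\eps$ (the paper exploits the same freedom when checking $p_2/p_0=o(\eps)$).
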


\begin{proof}
Let $p_2:= \tfrac{(\log n)^2}{\lcompconst^2 n^{k-j+1}}$ and $p_1$ be such that $p_1+p_2-p_1p_2=(1+\eps)p_0=p$. Note that $p_1 = (1+\eps')p_0$, where $\eps' = (1+o(1))\eps$, so $\cH^k(n,p_1)$ also has $(1\pm o(1))\frac{2\eps}{\binom{k}{j}-1}\binom{n}{j}$ $j$-sets in large components.

By Claim~\ref{claim:fullshadow}, each $(j-1)$-set lies in $\Omega (\lcompconst n)$ $j$-sets of any large component. (In particular, this means that the number of large components is already bounded by $O (1/\lcompconst)$.) Suppose we have two distinct large components. Pick a $(j-1)$-set $J'$ and consider the $(k-j+1)$-uniform link hypergraph of $J'$, i.e. the hypergraph on $[n]\setminus J'$ whose edges are all $(k-j+1)$-sets which, together with $J'$, form an edge of $\cH^k(n,p)$. This has two distinct vertex components, each with $\Omega (\lcompconst n)$ vertices. There are therefore $\Omega (\lcompconst^2 n^{k-j+1})$ possible $(k-j+1)$-sets which meet both.

By the choice of $p_1$ we have $\cH^k(n,p_1)\cup \cH^k(n,p_2) = \cH^k(n,p)$. By adding in another round of exposure with probability $p_2$, the probability that these two components of $\cH^k(n,p_1)$ do not merge is at most
\[
(1-p_2)^{\Omega (\lcompconst^2 n^{k-j+1})}\le \exp \left(-\tfrac{(\log n)^2}{\lcompconst^2 n^{k-j+1}}\Omega (\lcompconst^2 n^{k-j+1})\right) \le \exp \left(-(\log n)^{3/2}\right).
\]
Now suppose the components of $H^k(n,p_1)$ are $\mathcal{C}_1,\ldots,\mathcal{C}_s$, where $s = O(1/\lcompconst)$. Then the probability that at least one of these components does not merge with, say, $\mathcal{C}_1$ is at most
\[
s \exp \left( -(\log n)^{3/2} \right) \le \exp \left( -(\log n)^{3/2} + \log(n^{1/3}) \right) = o(1).
\]
Thus there is a single component of size $(1\pm o(1))\frac{2\eps}{\binom{k}{j}-1}\binom{n}{j}$, while this is also the number of $j$-sets in large components, i.e. sprinkling the edges with probability $p_2$ may have created more large components, but they can only have total size $o(\eps n^j)$.
\end{proof}

\section{Smooth Boundary: Proof of Lemma~\ref{lem:uniformboundary}}\label{sec:uniformboundary}

We will prove Lemma~\ref{lem:uniformboundary} by induction on $\ell$. The case $\ell =0$ is trivially true, since the empty set is contained in all edges of the boundary. (Indeed, this would hold even with $\smootherror{0}=0$, but choosing $\smootherror{0} \gg \eo \log n$ as we did will be convenient later on.) From now on we therefore assume that $\ell \ge 1$ and that the statement holds for $0,\ldots,\ell -1$.

The statement of Lemma~\ref{lem:uniformboundary} says that with probability $1-\expprob$, a certain property must hold for \emph{every} initial $j$-set $J$. We note that it is enough to show this for a single initial $j$-set -- then the full generality is implied by a union bound, since $\binom{n}{j}\expprob = \expprob$. Therefore from now on we fix an initial $j$-set $J_0$, and for simplicity we denote $\generationsmooth{i}$ by $\gen{i}$ for any $i$.

Recall that for $1\le \ell \le j-1$ we defined $\starttime{\ell}$ to be the first generation $i$ for which $\gensize{i}\ge n^{\ell+\delta}$, which is when we aim to start the process of smoothing the degrees of $\ell$-sets, while $\smoothtime{\ell}= \left\lceil \frac{(j-\ell) \log n}{-\log ((1+2\co + 2\eps)\nbrhdbrconst{\ell})}\right\rceil$ is the time taken for this smoothing process. Then $\smoothend{\ell}= \starttime{\ell}+\smoothtime{\ell}$.

We will first show that $\smoothend{\ell}<\starttime{\ell+1}$ for each $1\le \ell \le j-2$, i.e. we finish the smoothing process for the $\ell$-sets before we start the smoothing process for the $(\ell+1)$-sets. (This is important because of the inductive nature of the proof.)

So observe that by Corollary~\ref{cor:expansionbound} and Remark~\ref{rem:overshoot} we have
\begin{align*}
\gensize{\smoothend{\ell}} & \le \gensize{\starttime{\ell}}\left( (1+\eps)(1+2\elbr) \right)^\smoothtime{\ell}\\
& \le 2n^{\ell+\delta} \exp \left( 2\eps \smoothtime{\ell} \right)\\
& \le 2n^{\ell+\delta}\exp (\Theta (\eps \log n))\\
& \le n^{\ell + 2\delta}<n^{\ell+1},
\end{align*}
and therefore we have not yet reached generation $\starttime{\ell+1}$.

We now prove the inductive step in Lemma~\ref{lem:uniformboundary} with the help of a number of preliminary claims. Our strategy is as follows. We fix $i$ and examine how the $j$-sets of $\gen{i}$ can influence the degree of $L$ in $\gen{i+1}$. To simplify notation, we write $\deggen{i}{L}$ instead of $d_L(\gen{i})$.

\begin{itemize}
\item A \emph{jump} to $L$ occurs when we query a $k$-set containing $L$ from a $j$-set which did not contain $L$ and the $k$-set forms an edge of $\cH^k(n,p)$. Such an edge contributes at most $\binom{k-\ell}{j-\ell}$ to $\deggen{i+1}{L}$.
\item A \emph{neighbourhood branching} at $L$ occurs when we query any $k$-set from a $j$-set containing $L$ and it forms an edge. Such an edge contributes at most $\binom{k-\ell}{j-\ell}-1$ to $\deggen{i+1}{L}$. 
\end{itemize}
We aim to show the following:
\begin{itemize}
\item The contribution to $\deggen{i+1}{L}$ made by jumps to $L$ is approximately the same for each $L$ (Claim~\ref{lem:jumps}).
\item The contribution to $\deggen{i+1}{L}$ made by neighbourhood branchings at $L$ tends to be smaller than $\deggen{i}{L}$ (Claim~\ref{lem:nbrhdbranchings}).
\item After sufficiently many steps, the degree in the boundary is approximately smooth.
\end{itemize}

The first two steps are essentially a concentration of probability argument, which can only hold with high probability once the boundary is large. This is the reason why we introduced a starting time $\starttime{\ell}$. Furthermore, we need to know that this smoothing process finishes before the stopping conditions of the search process are reached, i.e. that $\stoptime \ge \smoothend{j-1}$. We therefore need an additional technical lemma:

\begin{itemize}
\item The component grown up to the start of the smoothing process is small (Lemma~\ref{lem:smallstart}).
\end{itemize}

At this point we fix $1 \le \ell \le j-1$ and an $\ell$-set $L$. We will show that the degree of $L$ is well-concentrated with a sufficiently high probability that, at the end of the argument, we can apply a union bound over all choices of $L$. We first show that the contribution of jumps is likely to be well-concentrated around its mean. 

\begin{claim}[Smooth jumps]\label{lem:jumps}
Suppose $|\gen{i}| \ge n^{\ell + \delta}$. Then with probability at least $1-\expprob$, the contribution to $\deggen{i+1}{L}$ made by jumps to $L$ is
\[
(1\pm 2\smootherror{\ell-1})(1+\eps) \frac{\binom{k-\ell}{j-\ell}\left(\binom{k}{\ell}-\binom{j}{\ell}\right)}{\cconst}\frac{|\gen{i}|}{\binom{n}{\ell}}  - O\left(\frac{\deggen{i}{L}}{n}\right). 
\]
\end{claim}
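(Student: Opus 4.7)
The plan is to separate the analysis into a combinatorial expected-value calculation (using the inductive hypothesis for $\ell' < \ell$) and a concentration step (via the Chernoff bound applied through Lemma~\ref{lem:subsequence}). For the first step, I classify each $j$-set $J' \in \gen{i}$ with $L \not\subseteq J'$ according to $m := |J' \cap L| \in \{0,1,\dots,\ell-1\}$, letting $f_m(L)$ be the number of such $J'$. The double-counting identity
\[
\sum_{\substack{M \subseteq L \\ |M|=m}} d_M(\gen{i}) = \sum_{m'=m}^{\ell} \binom{m'}{m} f_{m'}(L)
\]
combined with the inductive hypothesis at $\ell' = m$ (and the trivial case $m = 0$, for which $d_\emptyset(\gen{i}) = |\gen{i}|$) allows extraction of $f_m(L)$: pulling out the $m'=\ell$ term, which equals $\binom{\ell}{m}\deggen{i}{L}$, and observing that the intermediate terms $f_{m'}(L)$ with $m < m' < \ell$ are smaller than the main term by a factor of order $1/n$ (bounded inductively by the same formula at $\ell' = m'$), yields
\[
f_m(L) = (1 \pm \smootherror{m})\binom{\ell}{m}\frac{|\gen{i}|\binom{n}{j-m}}{\binom{n}{j}} - \binom{\ell}{m}\deggen{i}{L} + O\!\left(\tfrac{|\gen{i}|}{n^{m+1}}\right).
\]

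Each jump from a $J'$ with $|J' \cap L| = m$ is a query to one of the $\binom{n-j-\ell+m}{k-j-\ell+m}$ $k$-sets containing $J' \cup L$, and each such edge contributes exactly $\binom{k-\ell}{j-\ell}$ to $\deggen{i+1}{L}$ except for rare overlaps with previously discovered $j$-sets containing $L$, which are of lower order (bounded using Property~\eqref{prop:crudemaxdeg}). The previously queried $k$-sets contribute only a relative error of order $\alpha \ll \smootherror{\ell-1}$, again by Lemma~\ref{lem:crudemaxdeg}. Thus the expected degree contribution from jumps equals, to leading order,
\[
\binom{k-\ell}{j-\ell}\, p \sum_{m=0}^{\ell-1} f_m(L)\binom{n-j-\ell+m}{k-j-\ell+m}.
\]
Substituting the formula for $f_m(L)$, the main term becomes $\binom{k-\ell}{j-\ell}\, p\, |\gen{i}|\binom{n}{k-\ell}(\binom{k}{j}-\binom{k-\ell}{j-\ell})/\binom{n}{j}$ by the Vandermonde identity $\sum_{m=0}^{\ell-1}\binom{\ell}{m}\binom{k-\ell}{j-m} = \binom{k}{j}-\binom{k-\ell}{j-\ell}$. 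Combined with $p = (1+\eps)/(\cconst\binom{n}{k-j})$, the asymptotic ratio $\binom{n}{k-\ell}/(\binom{n}{k-j}\binom{n}{j}) \sim \binom{k}{\ell}/(\binom{k}{j}\binom{n}{\ell})$, and the standard identity $\binom{k-\ell}{j-\ell}\binom{k}{\ell} = \binom{k}{j}\binom{j}{\ell}$, this simplifies exactly to
\[
(1+\eps)\frac{\binom{k-\ell}{j-\ell}(\binom{k}{\ell}-\binom{j}{\ell})}{\cconst}\cdot \frac{|\gen{i}|}{\binom{n}{\ell}}.
\]
The $\deggen{i}{L}$ correction coming from the $-\binom{\ell}{m}\deggen{i}{L}$ terms sums to $-p\binom{k-\ell}{j-\ell}\deggen{i}{L}\sum_{m=0}^{\ell-1}\binom{\ell}{m}\binom{n-j-\ell+m}{k-j-\ell+m} = O(p\binom{n}{k-j-1}\deggen{i}{L}) = O(\deggen{i}{L}/n)$, matching the claimed error.

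For concentration, applying Lemma~\ref{lem:subsequence} to the subsequence of queries from $\gen{i}$ (with source $J' \not\supseteq L$) to $k$-sets containing $L$ produces an i.i.d.\ $\mathrm{Be}(p)$ sequence, so the number of jumps satisfies Theorem~\ref{thm:chernoff}. Because $|\gen{i}| \ge n^{\ell+\delta}$, the expected number of jumps is at least $\Theta(n^\delta)$, so relative deviation $\eo$ fails with probability at most $\exp(-\Theta(\eo^2 n^\delta)) = \expprob$ by the hierarchy $\eo \gg n^{-\delta/24}$. Combining the Chernoff deviation $\eo$ with the inductive error $\smootherror{\ell-1}$ (and using $\eo, \alpha \ll \smootherror{\ell-1}$) produces the claimed $(1 \pm 2\smootherror{\ell-1})$ factor. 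The main obstacle is the combinatorial bookkeeping: correctly tracking the inductive hypothesis errors, the intermediate $f_{m'}(L)$ corrections, and the previously-queried $k$-sets, while ensuring the Vandermonde reduction collapses the sum to exactly the constant $\binom{k-\ell}{j-\ell}(\binom{k}{\ell}-\binom{j}{\ell})$ in the stated formula; the concentration argument itself is routine once the mean and scale are established.
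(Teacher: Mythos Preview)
Your proposal is correct and follows essentially the same approach as the paper's proof: classify the $j$-sets of $\gen{i}$ by their intersection with $L$, use the inductive hypothesis on $\ell'<\ell$ to estimate these counts, count permissible $k$-sets via Lemma~\ref{lem:crudemaxdeg}, apply Chernoff, and collapse the resulting sum via a Vandermonde-type identity to the constant $\binom{k}{\ell}-\binom{j}{\ell}$. The only cosmetic differences are that the paper indexes by the actual subset $L'\subsetneq L$ (summing over $\binom{\ell}{\ell'}$ choices) rather than by $|L'|=m$, and applies the Chernoff bound separately for each $L'$ rather than once to the total; neither affects the argument.
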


Before we prove this claim, we provide an argument for why the main term should intuitively be the correct contribution from jumps to the degree of $L$. To see this, we consider the number of jumps we expect to \emph{all} $\ell$-sets in generation $i+1$. We have $\gensize{i}$ $j$-sets in generation $i$, from each of which we may query approximately $\binom{n}{k-j}$ $k$-sets, and each forms an edge with probability $p$. Thus we expect to find about
\[
p\binom{n}{k-j}\gensize{i} = \frac{1+\eps}{\cconst}\gensize{i}
\]
edges. On the whole, an edge results in a jump to $\binom{k}{\ell}-\binom{j}{\ell}$ $\ell$-sets (any which are contained in the edge but not in the $j$-set from which we made the query). Since there are $\binom{n}{\ell}$ $\ell$-sets in total, the average number of jumps to an $\ell$-set ``should'' be about
\[
\frac{1+\eps}{\cconst}\gensize{i} \frac{\binom{k}{\ell}-\binom{j}{\ell}}{\binom{n}{\ell}}.
\]
Finally, for most jumps to $L$, the number of $j$-sets containing $L$ which become active is $\binom{k-\ell}{j-\ell}$.

\begin{proof}
We consider how many ways we might jump to $L$. Given any $L'\subsetneq L$, there are $\deggen{i}{L'}-\deggen{i}{L}$ $j$-sets containing $L'$ in $\gen{i}$ from which we might jump to $L$. However, this may include some $j$-sets which actually have a larger intersection with $L$ than $L'$. The number of $j$-sets in $\gen{i}$ which have intersection exactly $L'$ with $L$ is also at least
\[
\deggen{i}{L'} - \sum_{L'\subsetneq L'' \subseteq L}\deggen{i}{L''}.
\]
Note that by the inductive hypothesis for Lemma~\ref{lem:uniformboundary}, $\deggen{i}{L'}$ is of order $|\gen{i}|/n^{\ell'}$ (where $\ell'=|L'|$), and similarly for each $L'\subsetneq L'' \subsetneq L$. Furthermore $\deggen{i}{L}\le \deggen{i}{L''}$ by definition. Therefore the number of $j$-sets in $\gen{i}$ which intersect $L$ in exactly $L'$ is
\[
\jumpsdeg{i}{L'}{L} =
\begin{cases}
\deggen{i}{L'} - \deggen{i}{L} & \mbox{if }\ell'=\ell-1 \\
(1-O(1/n))\deggen{i}{L'} & \mbox{otherwise.}
\end{cases}
\]

From each $j$-set $J$ intersecting $L$ in $L'$, we need a lower bound on the number $k$-sets containing $L$ which we may query, and which contain $\cconst$ neutral $j$-sets. So consider the number of such $k$-sets which are not permissible because they contain some other, non-neutral $j$-set $J'$. We make a case-distinction based on the intersection $I:= J'\cap (J\cup L)$. Let $i:=|I|$. Then the number of non-neutral $j$-sets with this intersection is at most $\Delta_{i}(\disc) = O(\lcompconst n^{j-i})$ by Lemma~\ref{lem:approximations}, Property~\eqref{prop:crudemaxdeg}.

The number of $k$-sets containing $L,J$ and $J'$ is of order $n^{k-j-\ell+\ell' - (j-i)}$. Thus the number of non-permissible $k$-sets is at most $O( \lcompconst n^{k-j-\ell+\ell'})$. Note that this is independent of $i$. Furthermore, the number of possible choices for $I$ is at most $\sum_{i=0}^{j-1} \binom{j+\ell-\ell'}{i} = O(1)$.

On the other hand, the total number of possible $k$-sets is $\binom{n-j-\ell+\ell'}{k-j-\ell+\ell'}$, and so the number of queries we may make to $k$-sets containing $L$ and $\cconst$ neutral $j$-sets is $(1-O(\lcompconst))\binom{n}{k-j-\ell+\ell'}$. The probability that each of these queries results in an edge is $p$. Thus the number of edges resulting in jumps from $L'$ to $L$ has distribution $\Bi (N,p)$, where
\[
N=(1-O(\lcompconst))\binom{n}{k-j-\ell+\ell'}\jumpsdeg{i}{L'}{L}
\]
Note that we use $N$ to denote both a lower bound on the number of queries which would result in jumps to $L$ and would contribute $\binom{k-\ell}{j-\ell}$  to the degree of $L$, and also an upper bound on the total number of queries which would result in jumps to $L$. Of course, technically these are different, but both satisfy the asymptotic formula above.

For simplicity, we will assume that
\[
N\ge \Theta(n^{k-j+\delta})
\]
in all cases. This certainly follows from the fact that $\jumpsdeg{i}{L'}{L}\ge \Theta(n^{\ell-\ell'+\delta})$ if $\ell'<\ell-1$. For $\ell'=\ell-1$, if $N=o(n^{k-j+\delta})$ then $\jumpsdeg{i}{L'}{L} = o(n^{1+\delta})$, and it must be that $\deggen{i}{L}= (1- o(1)) \deggen{i}{L'} = \Theta (\gensize{i}/n^{\ell-1})$, and therefore the $O(\deggen{i}{L}/n)$ error term in the statement of Claim~\ref{lem:jumps} is as large as the main term. The lower bound is therefore automatic and we only need to prove an upper bound, which will only become harder if we increase $N$.

By the Chernoff bound (Theorem~\ref{thm:chernoff}), the probability that the number of such edges is not in $(1\pm \eo)pN$ is at most 
\[
2\exp(-\eo^2 Np/3)  \le \exp(-\Theta(\eo^2 n^{\delta})) \le \expprob.
\]

If this unlikely event does not occur, then the contribution to $\deggen{i}{L}$ made by jumps from $L'$ to $L$ is
\begin{align*}
 \binom{k-\ell}{j-\ell}(1\pm \eo)N\frac{1+\eps}{\cconst\binom{n}{k-j}}
=  (1\pm 2\eo)(1+\eps)\frac{\binom{k-\ell}{j-\ell}}{\cconst}\frac{(k-j)!}{(k-j-\ell+\ell')!}\frac{\jumpsdeg{i}{L'}{L}}{n^{\ell-\ell'}}.
\end{align*}
We now sum over all such sets $L' \subsetneq L$ and use the fact that
\begin{align*}
\frac{\jumpsdeg{i}{L'}{L}}{n^{\ell-\ell'}} & = (1- O(1/n))\frac{\deggen{i}{L'}}{n^{\ell-\ell'}}-O\left(\frac{\deggen{i}{L}}{n}\right)
\end{align*}
in all cases.
We also use the induction hypothesis from Lemma~\ref{lem:uniformboundary}, which tells us that
\[
\deggen{i}{L'}= (1\pm \smootherror{\ell'})\frac{\gensize{i}}{\binom{n}{j}}\binom{n}{j-\ell'},
\]
and deduce that the contribution to $\deggen{i+1}{L}$ made by jumps to $L$ is 
\begin{align*}
 \sum_{\ell'=0}^{\ell-1}\binom{\ell}{\ell'}& (1\pm 2\eo)(1+\eps)\frac{\binom{k-\ell}{j-\ell}}{\cconst}\frac{(k-j)!}{(k-j-\ell+\ell')!}
\frac{\jumpsdeg{i}{L'}{L}}{n^{\ell-\ell'}}\\
= \, & (1\pm 3\eo)(1+\eps)\frac{\binom{k-\ell}{j-\ell}}{\cconst} \sum_{\ell'=0}^{\ell-1}\binom{\ell}{\ell'} \frac{(k-j)!}{(k-j-\ell+\ell')!}
\frac{\deggen{i}{L'}}{n^{\ell-\ell'}}-O\left(\frac{\deggen{i}{L}}{n}\right)\\
= \, & (1\pm \frac{3}{2}\smootherror{\ell-1})(1+\eps)\frac{\binom{k-\ell}{j-\ell}}{\cconst} \sum_{\ell'=0}^{\ell-1} \frac{\binom{\ell}{\ell'}(k-j)!j!}{(k-j-\ell+\ell')!(j-\ell')!} \frac{\gensize{i}}{n^\ell}-O\left(\frac{\deggen{i}{L}}{n}\right)\\
= \, & (1\pm 2\smootherror{\ell-1})(1+\eps) \frac{\binom{k-\ell}{j-\ell}}{\cconst}\frac{|\gen{i}|}{\binom{n}{\ell}}\, f -O\left(\frac{\deggen{i}{L}}{n}\right),
\end{align*}
where 
\begin{align*}
 f=f(j,k,\ell):=&\frac{(k-j)!j!}{\ell!} \sum_{\ell'=0}^{\ell-1}\frac{\binom{\ell}{\ell'}}{(j-\ell')!(k-j-\ell+\ell')!}\\ 
 =& \frac{(k-j)!j!}{\ell!} \sum_{\ell'=0}^{\ell-1}\binom{\ell}{\ell'}\frac{\binom{k-\ell}{j-\ell'}}{(k-\ell)!}\\
 =& \frac{(k-j)!j!}{(k-\ell)!\ell!}\left(\sum_{\ell'=0}^{\ell}\binom{\ell}{\ell'}\binom{k-\ell}{j-\ell'}-\binom{k-\ell}{j-\ell}\right)\\
 =& \frac{(k-j)!j!}{(k-\ell)!\ell!}\left(\binom{k}{j} - \binom{k-\ell}{(j-\ell)}\right)\\
 =& \frac{k!}{\ell!(k-\ell)!} - \frac{j!}{\ell!(j-\ell)!}\\
 =& \binom{k}{\ell}-\binom{j}{\ell}
\end{align*}
as required.
\end{proof}

Note that the effect that $\deggen{i}{L}$ has on the number of jumps to $L$ in generation ${i+1}$ is very small (the $O(\deggen{i}{L}/n)$ term). Next we show that the effect that $\deggen{i}{L}$ has on the number of neighbourhood branchings at $L$ in generation $i+1$, while significantly larger, is still likely to be smaller than $\deggen{i}{L}$. Let $\cconstl{\ell}:=\binom{k-\ell}{j-\ell}-1$ for $\ell=1,\ldots,j-1$ (and observe that this is consistent with the previous definition of $\cconst$). Recall that $\nbrhdbrconst{\ell}= \frac{\binom{k-\ell}{j-\ell}-1}{\binom{k}{j}-1} = \frac{\cconstl{\ell}}{\cconst} < 1 $ for $\ell \ge 1$.

\begin{claim}[Neighbourhood branchings contract]\label{lem:nbrhdbranchings}
With probability at least\linebreak[4] $1-\expprobtwo$, the contribution to $\deggen{i+1}{L}$ made by neighbourhood branchings at $L$  is
\[
\begin{cases}
(1\pm \co)(1+\eps) \nbrhdbrconst{\ell} \deggen{i}{L} & \mbox{if } \deggen{i}{L}  \ge n^{\delta/3};\\
\le (1+\co)(1+\eps) \nbrhdbrconst{\ell} n^{\delta/3} & \mbox{otherwise.}
\end{cases}
\]
\end{claim}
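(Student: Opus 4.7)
The plan is to write the neighbourhood-branching contribution to $\deggen{i+1}{L}$ as approximately $\cconstl{\ell}$ times the number of edges found among queries from the $\deggen{i}{L}$ $j$-sets of $\gen{i}$ that contain $L$, and then apply a Chernoff bound via Lemma~\ref{lem:subsequence}. The expected contribution is
\[
\cconstl{\ell}\cdot\deggen{i}{L}\binom{n}{k-j}p=\frac{\cconstl{\ell}(1+\eps)\deggen{i}{L}}{\cconst}=(1+\eps)\nbrhdbrconst{\ell}\deggen{i}{L},
\]
exactly matching the target in the claim.

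First I would count the relevant queries. For each $J^*\in\gen{i}$ containing $L$, by Property~\eqref{prop:crudemaxdeg} of Lemma~\ref{lem:approximations} applied with $\alpha=\Theta(\lcompconst)$, only $O(\lcompconst n^{k-j})$ of the $\binom{n-j}{k-j}$ candidate $k$-sets are either blocked by an already-discovered $j$-set or shared with some other $J^{**}\in\gen{i}$ containing $L$ (the latter estimate arises by summing over the possible intersection size $\ell'\in\{\ell,\ldots,j-1\}$ and applying $\Delta_{\ell'}(\disc)=O(\lcompconst n^{j-\ell'})$). Hence the total number of distinct queries is $N=(1\pm O(\lcompconst))\deggen{i}{L}\binom{n}{k-j}$. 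By Lemma~\ref{lem:subsequence} these queries behave as independent $\Be(p)$ trials, so the number of edges found is $\Bi(N,p)$ with mean $\mu=(1\pm O(\lcompconst))(1+\eps)\deggen{i}{L}/\cconst$. If $\deggen{i}{L}\ge n^{\delta/3}$ then $\mu\ge\Theta(n^{\delta/3})$, and Theorem~\ref{thm:chernoff} gives
\[
\Pr\!\left(|\Bi(N,p)-\mu|\ge(\co/2)\mu\right)\le 2\exp\!\left(-\Theta(\co^2 n^{\delta/3})\right)\le\expprobtwo,
\]
since $\co\gg n^{-\delta/24}$ implies $\co^2 n^{\delta/3}\gg n^{\delta/4}$. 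If $\deggen{i}{L}<n^{\delta/3}$, applying only the upper Chernoff tail caps the edge count by $(1+\co/2)n^{\delta/3}(1+\eps)/\cconst$ with the same probability.

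Each edge found contains $L$ and furnishes $\cconstl{\ell}$ further $j$-sets containing $L$ as candidates for $\gen{i+1}$. Two loss mechanisms could reduce the actual contribution: (a) a candidate may already lie in $\disc$, or (b) two distinct edges produced in this step may carry the same new $j$-set. Both are controlled by Property~\eqref{prop:crudemaxdeg} --- the total number of already-discovered $j$-sets containing $L$ is at most $\Delta_\ell(\disc)=O(\lcompconst n^{j-\ell})$, and an analogous argument bounds multi-edge coincidences --- yielding a multiplicative $(1\pm O(\lcompconst))$ error which, since $\lcompconst\ll\co$, is absorbed into the $\co$ slack. Multiplying the concentrated edge count by $\cconstl{\ell}$, using $\cconstl{\ell}/\cconst=\nbrhdbrconst{\ell}$, and union-bounding over the at most $n^j$ choices of $L$ (which preserves $\expprobtwo$) delivers the claim. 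The main technical obstacle is this final edge-to-boundary bookkeeping, especially for the lower bound: one must rule out both prior discovery and multi-edge coincidences with enough precision to leave the $\co$ slack intact, and the bounded degree lemma is exactly the tool that makes this possible.
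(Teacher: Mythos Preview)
Your plan is the same as the paper's: sandwich the neighbourhood-branching contribution $D$ between $\cconstl{\ell}$ times two binomial edge counts and apply Chernoff. The paper, however, executes this more cleanly by invoking the already-established coupling $\lbr\prec\ubfs$ directly: $D$ is stochastically dominated by $\cconstl{\ell}\cdot\Bi\big(\binom{n}{k-j}\deggen{i}{L},p\big)$ (trivial overcounting of queries, each edge contributes at most $\cconstl{\ell}$), and $D$ stochastically dominates $\cconstl{\ell}\cdot\Bi\big((1-\elbr)\binom{n}{k-j}\deggen{i}{L},p\big)$ (because from each $J^*\supset L$ the lower coupling guarantees at least $(1-\elbr)\binom{n}{k-j}$ queries whose $\cconst$ non-$J^*$ $j$-sets are \emph{all} neutral, hence in particular the $\cconstl{\ell}$ of them containing $L$ are all new). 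No post-hoc ``loss'' bookkeeping is needed.

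Your version has a genuine gap in the lower bound, precisely in the loss-mechanism step you flag as the main obstacle. You bound the loss from (a) by ``the total number of already-discovered $j$-sets containing $L$ is at most $\Delta_\ell(\disc)=O(\lcompconst n^{j-\ell})$''. But this counts \emph{distinct} bad $j$-sets, not the number of (edge, bad-candidate) incidences; the same discovered $J'\supset L$ can sit inside several edges found in this round, and each occurrence is a separate loss. More damagingly, even granting your bound, $O(\lcompconst n^{j-\ell})$ is an \emph{additive} error, and it is not $O(\lcompconst)\cdot\deggen{i}{L}$ unless $\deggen{i}{L}=\Theta(n^{j-\ell})$. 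In the regime where $\deggen{i}{L}$ is near its floor $n^{\delta/3}$ (which is exactly where the claim is delicate), $\lcompconst n^{j-\ell}$ can dwarf the main term, so the asserted multiplicative $(1\pm O(\lcompconst))$ does not follow.

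The fix is to argue at the level of $k$-sets rather than $j$-sets --- in fact your first paragraph already contains the right computation. For each $J^*\in\gen{i}$ with $L\subset J^*$, the number of $k$-sets through $J^*$ that contain \emph{any} already-discovered $j$-set is $\sum_{\ell'}\binom{j}{\ell'}\Delta_{\ell'}(\disc)\binom{n}{k-2j+\ell'}=O(\lcompconst n^{k-j})$; restrict the lower bound to the remaining $(1-O(\lcompconst))\binom{n}{k-j}$ queries and note that every edge among these contributes exactly $\cconstl{\ell}$ new $j$-sets containing $L$. This collapses your two-stage ``count edges, then subtract losses'' into a single stochastic lower bound, and is exactly what the paper achieves by quoting $\lbr\prec\ubfs$.
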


Note that the ``bad'' probability in this claim is larger than in the other results we have stated. For technical reasons, we will need to apply Claim~\ref{lem:nbrhdbranchings} for smaller boundaries than, for example, Claim~\ref{lem:jumps}.

\begin{proof}
Let $D$ denote the contribution to $\deggen{i+1}{L}$ made by neighbourhood branchings at $L$. We first prove the upper bound. From every $j$-set containing $L$, we make at most $\binom{n}{k-j}$ queries, and each time we discover an edge in this way, it has a contribution of at most $\cconstl{\ell}$ to $\deggen{i+1}{L}$. Thus $D$ is stochastically dominated by a random variable $Y$ with distribution $\cconstl{\ell}\cdot\Bi \big(\binom{n}{k-j}\deggen{i}{L},p \big)$, which has expectation
\begin{align*}
\EE (Y)= \cconstl{\ell}\binom{n}{k-j}\deggen{i}{L}p & =(1+\eps) \frac{\cconstl{\ell}}{\cconst}\deggen{i}{L}\\
& =(1+\eps)\nbrhdbrconst{\ell}\deggen{i}{L}.
\end{align*}
Thus when $\deggen{i}{L}\ge n^{\delta/3}$, by the Chernoff bound (Theorem~\ref{thm:chernoff}) applied to $Y$ we have
\begin{align*}
\Pr (D \ge (1+\co)(1+\eps)\nbrhdbrconst{\ell}\deggen{i}{L}) & \le \Pr (Y \ge (1+\co)(1+\eps)\nbrhdbrconst{\ell}\deggen{i}{L})\\
& \le \exp \left(- \co^2 (1+\eps) \nbrhdbrconst{\ell}\deggen{i}{L}/3 \right)\\
& \le \exp (-\Theta (\co^2 \deggen{i}{L}))\\
& \le \expprobtwo
\end{align*}
since $\co \ge n^{-\delta/24}$.
This proves the upper bound in the case that $\deggen{i}{L} \ge n^{\delta/3}$. In the second case, we simply take $\cconstl{\ell} \cdot \Bi \big(\binom{n}{k-j}n^{\delta/3},p \big)$ as a dominating variable and a similar calculation holds. This proves the upper bound in both cases.

For the lower bound, we observe that since $\lbr \prec \ubfs$, the number of queries that would result in exactly $\cconstl{\ell}$ branchings that we make from each $j$-set is at least $(1-\elbr)\binom{n}{k-j}$. Thus $X$ dominates a random variable $Z$ with distribution $\cconstl{\ell} \cdot \Bi \big((1-\elbr)\binom{n}{k-j}\deggen{i}{L},p \big)$. A similar calculation shows us that 
\begin{align*}
\Pr (D \le (1-\co)(1+\eps)\nbrhdbrconst{\ell}\deggen{i}{L}) & \le \Pr (Z \le (1-\co)(1+\eps)\nbrhdbrconst{\ell}\deggen{i}{L})\\
& \stackrel{\mbox{{\tiny (Thm \ref{thm:chernoff})}}}{\le} \exp \left(- (\co-\elbr)^2 (1+\eps)(1-\elbr) \nbrhdbrconst{\ell}\deggen{i}{L}/2 \right)\\
& \le \exp (-\Theta (\co^2 \deggen{i}{L}))\\
& \le \expprobtwo.\qedhere
\end{align*}
\end{proof}

We now complete the proof of Lemma~\ref{lem:uniformboundary}.  We have already observed that $\starttime{\ell+1}\ge \smoothend{\ell}$ and that the statement of Lemma~\ref{lem:uniformboundary} holds for $\ell=0$, so by the inductive hypothesis we assume that the conclusion of Lemma~\ref{lem:uniformboundary} holds for $\ell'=0,\ldots,\ell-1$ and for all generations $i\in [\starttime{\ell}, \stoptime]$. By Corollary~\ref{cor:expansionbound}, we know that for this range of $i$, $\gen{i}$ is large enough to apply all of our concentration results. 

Pick any $\varstarttime{\ell}\in [\starttime{\ell},\stoptime]$. For $s\in [\smoothtime{\ell} , \stoptime - \varstarttime{\ell}]$, we let $d_s:= \deggen{\varstarttime{\ell}+s}{L}$ and let $d_s':=\max (d_s,n^{\delta/3})$. We assume that all of the high probability events of the previous claims hold (this occurs with probability at least $1-\expprobtwo$). Then we have
\begin{align*}
d_s & \le (1+\co)(1+\eps)\nbrhdbrconst{\ell} d_{s-1}' + (1 + 2\smootherror{\ell-1})(1+\eps) \frac{\binom{k-\ell}{j-\ell}\left(\binom{k}{\ell}-\binom{j}{\ell}\right)}{\cconst} \frac{|\gen{\varstarttime{\ell}+s-1}|}{\binom{n}{\ell}}  \\
& = (1+\co)(1+\eps)\nbrhdbrconst{\ell}d_{s-1}' + (1+ 2\smootherror{\ell-1})g\, |\gen{\varstarttime{\ell}+s-1}|
\end{align*}
where
\[
g=g(k,j,\ell,n,\eps,\smootherror{\ell-1}):= (1+\eps) \frac{\binom{k-\ell}{j-\ell}\left(\binom{k}{\ell}-\binom{j}{\ell}\right)}{\cconst } \binom{n}{\ell}^{-1}.
\]
Note that $g$ is not dependent on $s$, and that $(1+\co)(1+\eps)\nbrhdbrconst{\ell}<1$. If $d_{s-1}'=d_{s-1}$, we apply the same inequality with an index shift, and keep iterating until we have some $d_{s'} \le n^{\delta/3}$ or else we reach $s'=0$. In this way we obtain
\begin{align*}
& d_s \le (1+\co)^s(1+\eps)^s \nbrhdbrconst{\ell}^s  d_0 + n^{\delta/3}\\
 &  \hspace{1.5cm} + (1+ 2\smootherror{\ell-1})g \sum_{s'=0}^s (1+\co)^{s'}(1+\eps)^{s'} \nbrhdbrconst{\ell}^{s'}|\gen{\varstarttime{\ell}+s-s'}| \\
& \le (1+\co)^s\left( (1+\eps)^s \nbrhdbrconst{\ell}^s  d_0 +(1+ 2\smootherror{\ell-1})g \sum_{s'=0}^s (1+\eps)^{s'} \nbrhdbrconst{\ell}^{s'}|\gen{\varstarttime{\ell}+s-s'}|  \right) + n^{\delta/3}.
\end{align*}
To calculate the corresponding lower bound we cannot use $(1-\co)(1+\eps)\nbrhdbrconst{\ell}d_{s-1}$ from Lemma~\ref{lem:nbrhdbranchings}, since it may be that $d_{s-1}<n^{\delta/3}$, in which case that lemma does not give us any lower bound. Instead we use the lower bound (in all cases) of $(1-\co)(1+\eps)\nbrhdbrconst{\ell}d_{s-1}-n^{\delta/3}$, which may be negative but which will turn out to be good enough. Thus we have
\begin{align*}
d_s &\ge (1-\co)(1+\eps)\nbrhdbrconst{\ell} d_{s-1}-n^{\delta/3} \\
	&\qquad\qquad+ \left(1 - 2\smootherror{\ell-1}\right)(1+\eps) \frac{\binom{k-\ell}{j-\ell}\left(\binom{k}{\ell}-\binom{j}{\ell}\right)}{\cconst} \frac{|\gen{\varstarttime{\ell}+s-1}|}{\binom{n}{\ell}} -  O\left(\frac{d_{s-1}}{n}\right) \\
& \ge (1-2\co)(1+\eps)\nbrhdbrconst{\ell}d_{s-1} + (1- 3\smootherror{\ell-1})g\, |\gen{\varstarttime{\ell}+s-1}|.
\end{align*}
Note that we have absorbed the $-n^{\delta/3}$ term by modifying the $\smootherror{\ell-1}$ term. This is permissible since $\gensize{i}\ge n^{\ell+\delta}$, and so $\smootherror{\ell-1} \gensize{i}/n^\ell \gg n^{\delta/3}$.
Iterating gives
\begin{align*}
d_s & \ge (1-2\co)^s(1+\eps)^s \nbrhdbrconst{\ell}^s  d_0\\
& \hspace{1.5cm} + (1- 3\smootherror{\ell-1})g \sum_{s'=0}^s (1-2\co)^{s'}(1+\eps)^{s'} \nbrhdbrconst{\ell}^{s'}|\gen{\varstarttime{\ell}+s-s'}|\\
& \ge (1-2\co)^s \left( (1+\eps)^s \nbrhdbrconst{\ell}^s  d_0 + (1- 3\smootherror{\ell-1})g \sum_{s'=0}^s (1+\eps)^{s'} \nbrhdbrconst{\ell}^{s'}|\gen{\varstarttime{\ell}+s-s'}|  \right).
\end{align*}
Observe that only the first term of both the upper and lower bound on $d_s$ depend on $L$. Additionally, for $s\ge \smoothtime{\ell}$,we have
\begin{align*}
0\le (1-2\co)^s (1+\eps)^s \nbrhdbrconst{\ell}^s d_0 
  &\le(1+\co)^s (1+\eps)^s \nbrhdbrconst{\ell}^s d_0\\
  &\le \left(\left(1+2\co + 2\eps\right)\nbrhdbrconst{\ell}\right)^\smoothtime{\ell} n^{j-\ell}
 \le 1
\end{align*}
since we chose $\smoothtime{\ell} = \left\lceil \frac{(j-\ell) \log n}{-\log \left(\left(1+2\co + 2\eps\right)\nbrhdbrconst{\ell}\right)}\right\rceil$. In other words, $d_0$, the degree of $L$ at time $\varstarttime{\ell}$ only has an influence of at most one by time $\varstarttime{\ell}+s$, which will not affect calculations significantly, so we ignore it.

The remaining upper and lower bounds do not depend on $L$. Furthermore, observing that $g=\Theta (n^{-\ell})$ and that $\gensize{i}\ge \Theta(n^{\ell+\delta})$, we have $n^{\delta/3}=O(n^{-2\delta/3}g\gensize{i})$, and so the remaining upper and lower bounds differ by a multiplicative factor of
\begin{align*}
\frac{(1+O(n^{-2\delta/3}))(1+\eo)^s (1+2\smootherror{\ell-1})}{(1-2\eo)^s (1-3\smootherror{\ell-1})}
 & \le (1+5\co)^{s}(1+6\smootherror{\ell-1})\\
 & \le (1+7\smootherror{\ell-1}).
\end{align*}
By definition we have
\[
1+7\smootherror{\ell-1} \le 1+\smootherror{\ell}.
\]
Taking a union bound over all sets $L$ of size $\ell$ and all possible starting generations $\varstarttime{\ell}$, we may say that with probability at least $1-\expprobtwo$, all $\ell$-sets have asymptotically the same degree in $\gen{i}$. More precisely we have
\[
\sum_L \deggen{i}{L} = \binom{j}{\ell}\gensize{i}
\]
which implies that
\[
\frac{1}{\binom{n}{\ell}}\sum_L \deggen{i}{L} = \frac{\binom{n-\ell}{j-\ell}}{\binom{n}{j}}\gensize{i}.
\]
We further have, for any particular $\ell$-set $L_0$, that by the arguments above
\[
\frac{1}{1+\smootherror{\ell}}\frac1{\binom{n}{\ell}}\sum_L \deggen{i}{L} \le  \deggen{i}{L_0}\le(1+\smootherror{\ell}) \frac{1}{\binom{n}{\ell}}\sum_L \deggen{i}{L} 
\]
and since $1/(1+\smootherror{\ell})\ge 1-\smootherror{\ell}$, the conclusion follows.
The proof of Lemma~\ref{lem:uniformboundary} is now complete except for the proof of Lemma~\ref{lem:smoothstop}.\qed

Note that the introduction of $\varstarttime{\ell}$, which can be any generation after the fixed one $\starttime{\ell}$, ensures that our concentration does not get worse further along the process due to compounding error terms. Instead, we can imagine starting the smoothing process at any time, and choose the one that gives us the best possible concentration. Of course, in reality this means that once things are smooth, they remain smooth.

\section{Smoothness at stopping time: Proof of Lemma~\ref{lem:smoothstop}}\label{sec:smoothstop}

Recall that $\primaryStop$ is the event that stopping conditions~$\eqref{primaryStopCond2}$ or~\eqref{primaryStopCond3} were implemented, i.e. $\compone$ became large or its boundary became large before the component was fully explored.

In order to complete the proof of Lemma~\ref{lem:smoothstop}, we need to know that, conditioned on $\primaryStop$, with high probability $\stoptime \ge \smoothend{j-1}$, i.e. the smoothing process is finished before the stopping conditions are reached. If this were not true, then Lemma~\ref{lem:uniformboundary} would be an empty statement and therefore of no help.

In order to prove that it is not, we will need the following two technical lemmas. The first roughly states that the initial expansion happens fast enough that the portion of the component discovered before the start of the smoothing process (for $(j-1)$-sets) is negligible.

\begin{lemma}\label{lem:smallstart}
With probability at least $1-\expprob$, for every $\ell=0,\ldots,j-1$, we have $$\Delta_\ell \left(\compone\left(\starttime{j-1}\right)\right) = o(\lcompconst n^{j-\ell}).$$
\end{lemma}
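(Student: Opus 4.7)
The plan is to first bound the total number of discovered $j$-sets, $N := |\compone(\starttime{j-1})|$, by $o(\lcompconst n^j)$, and then to invoke Lemma~\ref{lem:crudemaxdeg} to transfer this into the desired bounds on $\Delta_\ell$ for all $\ell$ simultaneously. Once such an $N$ is in hand, since each discovered $j$-set is queried at most $\binom{n}{k-j}$ times, the total number of queries by generation $\starttime{j-1}$ is at most $N\binom{n}{k-j}$; so Lemma~\ref{lem:crudemaxdeg} applies with $\alpha = \Theta(N/n^j) = o(\lcompconst)$ (which satisfies $n^{-1+\delta} \ll \alpha \ll \eps$ by the parameter hierarchy), yielding $\Delta_\ell(\disc) \le C\alpha n^{j-\ell} = o(\lcompconst n^{j-\ell})$. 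Since $\compone(\starttime{j-1})$ is a subhypergraph of $\disc$, the same bound transfers.

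To bound $N$, I would split the generations $i \le \starttime{j-1}$ into a \emph{large phase} ($|\gen{i}| \ge n^{1-\delta}$) and an \emph{early phase} ($|\gen{i}| < n^{1-\delta}$). By Corollary~\ref{cor:expansionbound}, once the process enters the large phase it remains there with $|\gen{i+1}| = (1 \pm 2\elbr)(1+\eps)|\gen{i}|$, and moreover $|\gen{\starttime{j-1}}| \le 2 n^{j-1+\delta}$; summing the resulting geometric tail backward gives a large-phase contribution of $O(n^{j-1+\delta}/\eps)$. For the early phase, each generation contributes at most $n^{1-\delta}$, so the task reduces to bounding the number $T$ of early-phase generations. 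Using the lower-coupling branching process $\lbr$ (mean offspring $\sim 1+\eps$) and the implicit assumption that $\starttime{j-1}$ is reached, a concentration estimate for the escape time of a supercritical Galton--Watson process gives $T = O(\log n/\eps)$ with probability $1 - \expprob$; when $k \ge 3$ (so $\cconst \ge 2$) this is particularly clean since every nonempty generation of $\lbr$ has size at least $\cconst$. Combining the two phases, $N = O(n^{j-1+\delta}/\eps)$, which by the hierarchy $\lcompconst \eps \gg n^{-1+2\delta}$ is indeed $o(\lcompconst n^j)$. (If $\starttime{j-1}$ is never reached the statement is vacuous, as then $\compone$ is already tiny; the $j=1$ case reduces to the early-phase analysis alone and goes through using the stronger hierarchy $\eps \gg n^{-1/3}$ available then.)

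The main obstacle is extracting the $\expprob$ concentration on the early-phase length $T$. A naive first- or second-moment argument for the Galton--Watson escape time delivers only a polynomial-in-$n$ error probability, whereas the lemma requires the exponentially small error $\expprob$. I expect this to be achieved by a Chernoff-type estimate on the number of surviving lineages over an interval of $\Theta(\log n/\eps)$ generations, exploiting the batched-offspring structure ($\cconst \ge 2$) when $k \ge 3$; the delicate point is the very bottom of the early phase, where the population is $O(1)$ and ordinary concentration fails.
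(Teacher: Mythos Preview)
Your high-level plan --- bound $N = |\compone(\starttime{j-1})|$ by $o(\lcompconst n^j)$ and then feed this into Lemma~\ref{lem:crudemaxdeg} to get the $\Delta_\ell$ bounds for all $\ell$ --- is exactly how the paper proceeds as well. The reduction step is fine.

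The genuine gap is the one you flag yourself, and it is fatal to the specific bound you propose. The claim that the early phase (generations of size $< n^{1-\delta}$) lasts only $T = O(\eps^{-1}\log n)$ generations with probability $1-\expprob$ is simply false in the full range of $\eps$. Take for instance $\eps = 1/\log\log n$. Starting from a single $j$-set, the process lands at size exactly $\cconst$ after one edge, and the probability of staying at size $\cconst$ for $m$ consecutive generations is roughly $e^{-\Theta(m)}$; after such a stall the process still survives with probability $\Theta(\eps)$. For $m = C\eps^{-1}\log n$ this yields a bad-event probability of order $\eps \cdot n^{-C/\eps} = \eps\cdot n^{-C\log\log n}$, which is super-polynomially small but far larger than $\exp(-\Theta(n^{\delta/2}))$. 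So no Chernoff-type argument can rescue the $O(\eps^{-1}\log n)$ bound on $T$ at this error scale; the bottom of the process is genuinely too noisy.

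The paper sidesteps the issue by \emph{not} trying to bound the escape time sharply. Instead it first proves a deliberately weak, merely polynomial, lower bound on the single-lineage escape probability (Lemma~\ref{lem:getbig}: from one individual, $\lbr$ reaches size $n^{j-1+\delta}$ within $\anntime=\Theta(\eps^{-1}\log n)$ steps with probability at least $\eps/n^c$), and then \emph{amplifies}: it locates $n^{1/2-\delta/2+c}$ independent starting points, either all within a single generation that has grown to that size, or one per block of $\anntime+1$ consecutive generations if no generation is yet that large. Since $(1-\eps/n^c)^{n^{1/2-\delta/2+c}} \le \exp(-\eps n^{1/2-\delta/2}) \le \exp(-n^{\delta/2})$, the probability that all of these lineages fail to escape is the required $\expprob$. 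The price paid is a much weaker bound on $\starttime{j-1}$ --- of order $n^{1/2-\delta/2+c}\eps^{-1}\log n$ rather than $\eps^{-1}\log n$ --- but this is compensated by the fact that all but the last $\anntime$ generations then have size below $n^{1/2-\delta/2+c}$ rather than $n^{1-\delta}$, and the product is still $o(\lcompconst n^j)$. This trade-off between the generation count and the per-generation size cap is the idea your proposal is missing.
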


The second lemma tells us that there will be enough time between the start of the smoothing process and the stopping conditions being reached for the smoothing process to complete.

\begin{lemma}\label{lem:enoughsmoothingtime}
Conditioned on $\primaryStop$, with probability at least $1-\expprob$, we have $\stoptime - \starttime{j-1} \ge \frac{\delta}{2} \eps^{-1}\log n$
\end{lemma}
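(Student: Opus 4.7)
The plan is to prove an unconditional statement first and then pass to the conditional one. Specifically, I will show that with probability at least $1-\expprob$, none of the stopping conditions (S2) or (S3) can be triggered at any generation $\starttime{j-1}+s$ with $0 \le s \le \frac{\delta}{2}\eps^{-1}\log n$. Once this is established, the lemma follows by observing that $\Pr(\primaryStop) \geq (1-o(1))\frac{2\eps}{\cconst}$ (from the first moment estimate together with $\{J_1 \in \lcomps\} \subseteq \primaryStop$), which is polynomial in $1/n$, while the bad probability is sub-exponentially small, so dividing by $\Pr(\primaryStop)$ preserves the $\expprob$ error bound.

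To bound the boundary size, by the definition of $\starttime{j-1}$ we have $|\gen{\starttime{j-1}-1}| < n^{j-1+\delta}$, so Corollary~\ref{cor:expansionbound} applied to generation $\starttime{j-1}-1$ gives $|\gen{\starttime{j-1}}| \leq 2n^{j-1+\delta}$. Iterating Corollary~\ref{cor:expansionbound} for subsequent generations (in which the boundary exceeds $n^{1-\delta}$, so its first case applies), I obtain
\[
|\gen{\starttime{j-1}+s}| \leq 2n^{j-1+\delta}\bigl((1+2\elbr)(1+\eps)\bigr)^{s} \leq 2n^{j-1+2\delta}
\]
for all $s \leq \frac{\delta}{2}\eps^{-1}\log n$, using $\elbr \ll \eps$ and $(1+2\eps)^s \leq e^{2\eps s} \leq n^{\delta}$. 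Since $\lcompconst \gg n^{-1/2+\delta}$ yields $\lcompconst^2 n^j \gg n^{j-1+2\delta}$, stopping condition (S3) cannot be triggered in this window.

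For (S2), I would split the component size as
\[
|\compone(\starttime{j-1}+s)| \leq |\compone(\starttime{j-1})| + \sum_{i=1}^{s} |\gen{\starttime{j-1}+i}|.
\]
The first term is $o(\lcompconst n^j)$ by Lemma~\ref{lem:smallstart} with $\ell=0$. The second term is a geometric sum of the boundary bounds above, hence at most $O(\eps^{-1}n^{j-1+2\delta})$; since $\eps \gg n^{-1/2+\delta}$ and $\lcompconst \gg n^{-1/2+\delta}$, this is again $o(\lcompconst n^j)$, ruling out (S2) in the window. The main obstacle is really just to verify that the parameter hierarchy from Section~\ref{sec:prelim} is tight enough: both $\lcompconst^2 n^j$ and $\lcompconst n^j$ must comfortably dominate boundary and cumulative component size bounds that have grown by a factor of at most $n^\delta$ over the window, and the condition $\lcompconst \gg n^{-1/2+\delta}$ is exactly tailored for this.
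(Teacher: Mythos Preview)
Your proposal is correct and follows essentially the same route as the paper. Both arguments bound the boundary at time $\starttime{j-1}$ by $2n^{j-1+\delta}$, use Corollary~\ref{cor:expansionbound} to limit growth by a factor $(1+2\eps)$ per generation, and invoke Lemma~\ref{lem:smallstart} for $\ell=0$ to control the component size accumulated before $\starttime{j-1}$; the paper computes the minimum $x$ needed to reach either stopping threshold, while you take the contrapositive and show the thresholds are not reached within the window, which is logically equivalent. Your treatment of the conditioning on $\primaryStop$ (dividing the unconditional $\expprob$ bound by $\Pr(\primaryStop)\ge\Theta(\eps)$) is more explicit than the paper's, which simply notes that a union bound over the auxiliary failure events still yields $1-\expprob$ without discussing the conditioning; your version is arguably cleaner on this point.
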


We will prove these two lemmas in Sections~\ref{sec:smallstart} and~\ref{sec:enoughsmoothingtime} respectively. We first show how together they imply Lemma~\ref{lem:smoothstop}.

We note that by definition $\smoothend{j-1} - \starttime{j-1} = \smoothtime{j-1} = O(\log n)$, where the constant depends only on $k,j$, and therefore this is substantially smaller than $\eps^{-1}\log n$. Thus conditioned on $\primaryStop$, with probability at least $1-\expprob$ we have $\stoptime \ge \smoothend{j-1}$ and therefore Lemma~\ref{lem:smoothstop} follows directly from Lemma~\ref{lem:uniformboundary}.

Thus the proof of Lemma~\ref{lem:smoothstop}, and therefore of Theorem~\ref{thm:main}, is complete except for the proof of Lemmas~\ref{lem:smallstart} and~\ref{lem:enoughsmoothingtime}.\qed

\subsection{Proof of Lemma~\ref{lem:enoughsmoothingtime}}\label{sec:enoughsmoothingtime}

In this section we will use Lemma~\ref{lem:smallstart} to prove lemma~\ref{lem:enoughsmoothingtime}.

The expansion from one generation to the next is at most $(1+2\elbr)(1+\eps)\le 1+2\eps$ by Corollary~\ref{cor:expansionbound}. Let $x:= \stoptime - \starttime{j-1}$. Suppose first that we hit the stopping condition \eqref{primaryStopCond3}, i.e.\ $\left|\boundary\right|\ge \lcompconst^2 n^j$. Since by Remark~\ref{rem:overshoot} $\gensize{\starttime{j-1}}\le 2n^{j-1+\delta}$, we then have $(1+2\eps)^x \ge \lcompconst^2 n^{1-\delta}/2$, which gives
\[
x \ge \frac{\log(\lcompconst^2 n^{1-\delta}/2)}{\log (1+2\eps)} \ge \frac{\log (n^\delta)}{2\eps} = \frac{\delta}{2} \eps^{-1}\log n.
\]
On the other hand suppose we hit the stopping condition $|\compone|\ge \lcompconst n^j$. Then  Lemma~\ref{lem:smallstart} (for $\ell=0$) implies that up to time $\starttime{j-1}$ we have only seen $o(\lcompconst n^j)$ $j$-sets in total, and thus by Remark~\ref{rem:overshoot} we have
\begin{align*}
&&\sum_{i=0}^x (1+2\eps)^{i} 2 n^{j-1+\delta} &\ge (1-o(1))\lcompconst n^j\\
 &\Rightarrow&\frac{1-(1+2\eps)^{x+1}}{1-(1+2\eps)}  &\ge  (1-o(1))\lcompconst n^{1-\delta}/2\\
&\Rightarrow & (1+2\eps)^{x}&\ge  (1-o(1))\eps \lcompconst n^{1-\delta} \ge n^{\delta}\\
&\Rightarrow & x\log(1+2\eps)  &\ge  \delta \log n \\
&\Rightarrow & x  &\ge  \frac{\delta}{2} \eps^{-1}\log n
\end{align*}
as required. Note that in all the Lemmas, Corollaries and Remarks that we used here, the ``good'' event holds with probability $1- \expprob$, and a union bound over all failure probabilities still leaves a probability of at least $1-\expprob$. \qed

\subsection{Proof of Lemma~\ref{lem:smallstart}}\label{sec:smallstart}

The critical tool in our proof of Lemma~\ref{lem:smallstart} is Lemma~\ref{lem:getbig}, which gives a lower bound on the probability that the process will become large within a certain number of steps. Recall that $\lbr$ is a branching process starting with one individual in the $0$-th generation and whose offspring distribution $\adistr$ is the distribution of the random variable $$\cconst \cdot \Bi \left((1-\elbr)\binom{n}{k-j} ,p\right).$$ We have also seen that this provides a lower coupling on our actual search process during the time period that we are interested in.

Let $\annconst>0$ be some constant and let $\anntime:=(j-1+\delta+\annconst)\eps^{-1}\log n $.  For any $i\in\Nat$ we denote by $\lbrgensize{i}$ the size of the $i$-th generation of $\lbr$.

\begin{lemma}\label{lem:getbig} 
With probability at least $\eps/n^\annconst$, $\lbrgensize{\anntime} \ge n^{j-1+\delta}$.
\end{lemma}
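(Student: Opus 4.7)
My plan is to establish Lemma~\ref{lem:getbig} via the second moment method (Paley-Zygmund inequality) applied to $\lbrgensize{\anntime}$; in fact this will yield the stronger bound $\Pr[\lbrgensize{\anntime} \ge n^{j-1+\delta}] = \Omega(\eps)$, which trivially gives the $\eps/n^\annconst$ bound stated.

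First I collect the offspring moments. Each individual of $\lbr$ has $\cconst \cdot \Bi\bigl((1-\elbr)\binom{n}{k-j},\,p\bigr)$ offspring, so the offspring mean is
\begin{equation*}
\mu := \cconst(1-\elbr)\binom{n}{k-j}p = (1-\elbr)(1+\eps)
\end{equation*}
and the offspring variance is
\begin{equation*}
\sigma^2 := \cconst^2(1-\elbr)\binom{n}{k-j}p(1-p) = \cconst(1-\elbr)(1+\eps)(1-p).
\end{equation*}
Since $\elbr \ll \eps$ and $p = o(1)$, we have $\mu - 1 = (1-o(1))\eps$ and $\sigma^2 = \Theta(1)$, with implicit constants depending only on $j, k$.

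Since $\lbr$ starts from a single individual, the standard branching-process identities (derived easily via the conditional variance formula) give
\begin{equation*}
\EE[\lbrgensize{\anntime}] = \mu^\anntime \quad \text{and} \quad \Var[\lbrgensize{\anntime}] = \sigma^2 \mu^{\anntime-1}\frac{\mu^\anntime-1}{\mu-1} \le \frac{\sigma^2 \mu^{2\anntime-1}}{\mu-1}.
\end{equation*}
Using $\log \mu = (1-o(1))\eps$ together with $\anntime = (j-1+\delta+\annconst)\eps^{-1}\log n$ yields
\begin{equation*}
\EE[\lbrgensize{\anntime}] = n^{(1-o(1))(j-1+\delta+\annconst)} \ge 2n^{j-1+\delta}
\end{equation*}
for large $n$, since $\annconst > 0$ is a fixed constant. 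Moreover
$\Var[\lbrgensize{\anntime}] / \EE[\lbrgensize{\anntime}]^2 \le \sigma^2/(\mu(\mu-1)) = O(1/\eps)$.

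The Paley-Zygmund inequality applied to $\lbrgensize{\anntime}$ then gives
\begin{equation*}
\Pr\!\left[\lbrgensize{\anntime} \ge \tfrac{1}{2}\EE[\lbrgensize{\anntime}]\right] \ge \frac{\EE[\lbrgensize{\anntime}]^2}{4\,\EE[\lbrgensize{\anntime}^2]} = \frac{1}{4\bigl(1 + \Var[\lbrgensize{\anntime}]/\EE[\lbrgensize{\anntime}]^2\bigr)} = \Omega(\eps),
\end{equation*}
and since $\tfrac{1}{2}\EE[\lbrgensize{\anntime}] \ge n^{j-1+\delta}$, we conclude $\Pr[\lbrgensize{\anntime} \ge n^{j-1+\delta}] = \Omega(\eps) \ge \eps/n^\annconst$.

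The only real point requiring care is ensuring that the $o(1)$ term in the exponent of $\mu^\anntime$ is controlled uniformly in $n$, so that $\mu^\anntime$ indeed surpasses $2n^{j-1+\delta}$; this is automatic because $\annconst > 0$ is a fixed constant while the $o(1)$ vanishes as $n \to \infty$. Otherwise the argument is a routine second-moment calculation, and no finer branching-process analysis (such as Kesten-Stigum) is needed.
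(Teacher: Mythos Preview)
Your proof is correct and takes a genuinely different, more elementary route than the paper.

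The paper proves the lemma via a moment-generating-function argument: it shows inductively that $\EE\bigl(e^{\xi\,\lbrgensize{i}}\bigr) \le 1 + C_i(1+\eps)^i\xi$ for a carefully chosen $\xi$, uses this to control the upper tail $\Pr(\lbrgensize{\anntime}\ge y)$ via Markov on $e^{\xi\,\lbrgensize{\anntime}}$, and then inverts a Markov-type decomposition of $\EE(\lbrgensize{\anntime})$ to squeeze out a lower bound on $\Pr(\lbrgensize{\anntime}\ge n^{j-1+\delta})$. This requires a nontrivial inductive claim tracking the constants $C_i$ and a summation over a hierarchy of thresholds $y_i$.

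You instead use the closed-form variance of a Galton--Watson process and Paley--Zygmund. This is cleaner: the explicit formula $\Var(Z_n)=\sigma^2\mu^{n-1}(\mu^n-1)/(\mu-1)$ immediately gives $\Var/\EE^2=O(1/\eps)$, and Paley--Zygmund then yields $\Pr(\lbrgensize{\anntime}\ge \tfrac12\EE)=\Omega(\eps)$, which is in fact stronger than the $\eps/n^{\annconst}$ bound the paper obtains (and matches the asymptotic survival probability $\sim 2\eps/\cconst$). The paper's MGF machinery would in principle give exponential upper-tail information, but none of that is needed for Lemma~\ref{lem:getbig} itself, so your shortcut loses nothing for the application.
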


We aim to imitate a proof of Markov's inequality. However, since we have no upper limit on the size of the boundary, we will need to limit the contribution that the upper tail makes to the expected size of the boundary.

In order to do this we need to bound the upper tail probabilities, preferably exponentially. For this we imitate a proof of a Chernoff bound, applying Markov's inequality to the random variable $e^{\annalpha \lbrgensize{\anntime}}$ for some well chosen $\annalpha$.

We therefore need to calculate $\EE \left(e^{\annalpha \lbrgensize{i}}\right).$  Let $\varphi(z):= \sum_{t=0}^\infty e^{zt}\Pr (\adistr=t)$ be the moment generating function of $\adistr$. We also define 
$$\annalpha=\annalpha(n)=\frac{\eps}{(1+\eps)^\anntime n^{\annconst/2}}$$
and $C_i$ recursively by
\begin{align*}
\annerror{0} &:= 1+\delta\\
\annerror{i} & :=1+ \annexcess (i+1)\left(\frac{\cconst}{2}\annerror{i-1} (1+\eps)^i\annalpha+\eps\right) \hspace{0.5cm} \mbox{for } i\ge 1.
\end{align*}

\begin{claim}
For all $i$ we have
\[
\EE \left(e^{\annalpha \lbrgensize{i}}\right) = \varphi\left(\log \left(\EE\left(e^{\annalpha \lbrgensize{i-1}}\right)\right)\right).
\]
In particular,
\begin{equation}\label{IndAsymp}
\EE \left(e^{\annalpha \lbrgensize{i}}\right)\le 1+\annerror{i}\left(1+\eps\right)^{i}\annalpha\, ,
\end{equation}
for sufficiently large $n$ as long as 
\begin{equation}\label{IndStop}
i\left(1+\eps\right)^{i}\annalpha \log n=o(1)\, .
\end{equation}
\end{claim}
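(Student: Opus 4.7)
The plan is to establish the recursive identity first, then prove the inductive bound by induction on $i$, using explicit Taylor estimates on the moment generating function $\varphi$. For the recursive identity, I would exploit the branching structure of $\lbr$. Conditioning on $\lbrgensize{1}$, the first-generation size (which has distribution $\adistr$), the subtrees rooted at the $\lbrgensize{1}$ first-generation individuals are independent copies of $\lbr$. Therefore, given $\lbrgensize{1}=m$, we have $\lbrgensize{i} = \sum_{s=1}^{m} Z^{(s)}_{i-1}$ with $Z^{(s)}_{i-1}$ iid copies of $\lbrgensize{i-1}$. Writing $\psi_j := \EE(e^{\annalpha \lbrgensize{j}})$, independence gives $\EE(e^{\annalpha \lbrgensize{i}} \mid \lbrgensize{1}=m) = \psi_{i-1}^m$; averaging over $\lbrgensize{1}$ then yields $\psi_i = \EE(e^{\lbrgensize{1}\log\psi_{i-1}}) = \varphi(\log \psi_{i-1})$, which is the stated recursion.

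For \eqref{IndAsymp}, I would proceed by induction on $i$. The base case $\psi_0 = e^{\annalpha} \le 1+(1+\delta)\annalpha$ holds for $n$ large because $\annalpha = o(1)$. For the inductive step, I would bound $z_{i-1} := \log\psi_{i-1}$ by $\annerror{i-1}(1+\eps)^{i-1}\annalpha$ via the inductive hypothesis together with $\log(1+x)\le x$. Crucially, the hypothesis \eqref{IndStop} ensures $z_{i-1} = o(1)$, so standard Taylor estimates apply. Using $\varphi(z) = (1-p+pe^{\cconst z})^N \le \exp(Np(e^{\cconst z}-1))$ with $N = (1-\elbr)\binom{n}{k-j}$, together with $e^{\cconst z}-1 \le \cconst z + (\cconst z)^2$ valid for small $z$ and the identity $Np\cconst = (1-\elbr)(1+\eps)$, I would obtain an estimate of the form
\[
\varphi(z_{i-1}) \le \exp\!\bigl((1+\eps)z_{i-1} + \tfrac{\cconst}{2}(1+\eps)z_{i-1}^2 + O(\cconst^2 z_{i-1}^3)\bigr),
\]
and a further application of $e^x \le 1 + (1+\delta)x$ for small $x$ converts this into a bound of the form $1 + A_i(1+\eps)^i \annalpha$ for an explicit $A_i$.

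The main obstacle is verifying $A_i \le \annerror{i}$. The recursion defining $\annerror{i}$ is engineered precisely to absorb both the $\eps$-type corrections (coming from $Np\cconst = 1+\eps$ rather than $1$) and the quadratic corrections from the Taylor expansion of $e^{\cconst z}-1$, with the multiplicative factor $(i+1)$ providing room for the multiplicative errors accumulated across the $i$ iterations. The verification amounts to careful bookkeeping: condition \eqref{IndStop} ensures that the Taylor estimates $1+x\le e^x \le 1+(1+\delta)x$ remain valid at every step, and the explicit form of $\annerror{i}$ is designed to dominate the resulting expression by construction, so that the induction closes.
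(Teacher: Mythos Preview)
Your approach is essentially the same as the paper's: both establish the mgf recursion by conditioning on the first generation, and both prove \eqref{IndAsymp} by induction via a second-order Taylor expansion of $\varphi$ near $0$, then verify that the resulting coefficient is dominated by $\annerror{i}$. The only cosmetic difference is in how the expansion is organised. The paper sets $p_i := (1+\annerror{i-1}\annea{i-1})^{\cconst}p$, rewrites the sum as a binomial pmf with parameter $p_i$, and obtains the closed form $\varphi(z_{i-1}) \le \bigl(\tfrac{1-p}{1-p_i}\bigr)^{\binom{n}{k-j}}$, which it then expands directly in powers of $(p_i/p - 1)$; you instead pass through the inequality $(1+x)^N \le e^{Nx}$ to get $\varphi(z) \le \exp\bigl(Np(e^{\cconst z}-1)\bigr)$ before expanding. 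Both routes produce a bound of the shape $1 + \annerror{i-1}\annea{i}\bigl(1 + \eps + \tfrac{\cconst}{2}\annerror{i-1}\annea{i} + \text{lower order}\bigr)$, and the final bookkeeping step---checking this is at most $\annerror{i}$ by substituting the recursive definition of $\annerror{i-1}$---is identical.

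One point to make explicit: your claim that \eqref{IndStop} forces $z_{i-1} = o(1)$ relies on knowing $\annerror{i-1} = O(\log n)$, since $z_{i-1} \le \annerror{i-1}(1+\eps)^{i-1}\annalpha$. The paper establishes $\annerror{i} = O(\log n)$ for all $i \le \anntime$ as a separate short induction (directly from the recursion defining $\annerror{i}$, using $\anntime\eps = O(\log n)$ and $(1+\eps)^\anntime\annalpha = \eps/n^{\annconst/2}$) \emph{before} the main inductive step; you should do the same, as otherwise the validity of your Taylor estimates is not yet justified at the point you invoke them.
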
 

\begin{proof} 
The first part of the claim is a standard fact about moment generating functions, which can be proved by conditioning on the number of children in the first step:
\begin{align*}
\EE \left(e^{\annalpha \lbrgensize{i}}\right) & = \sum_{t=0}^\infty \Pr \left(\lbrgensize{1}=t\right) \EE \left(e^{\annalpha \lbrgensize{i}} \; | \; \lbrgensize{1}=t\right)\\
& = \sum_{t=0}^\infty \Pr \left(\lbrgensize{1}=t\right) \EE \left(e^{\annalpha \lbrgensize{i}} \; | \; \lbrgensize{1}=1\right)^t\\
& = \sum_{t=0}^\infty \Pr \left(\lbrgensize{1}=t\right) \EE \left(e^{\annalpha \lbrgensize{i-1}}\right)^t\\
& = \varphi\left(\log \left(\EE\left(e^{\annalpha \lbrgensize{i-1}}\right)\right)\right).
\end{align*}
In proving the second part of the claim, for simplicity we will actually calculate the expectation in $\ubr$ rather than $\lbr$, which amounts to ignoring the $(1-\elbr)$ terms. This is permissible since we only require an upper bound on $\EE (e^{\annalpha \lbrgensize{i}})$ (and is a good approximation in any case since $\elbr \ll \eps$).

We prove the second part of the claim by induction on $i$. For the case $i=0$ the statement becomes $e^\annalpha \le 1+\annerror{0}\annalpha$ which is certainly true for sufficiently large $n$ since $\annalpha =o(1)$ and $\annerror{0} >1$ is bounded away from $1$. We therefore assume that the result holds for $i-1$.

One fact which we will use repeatedly is that for $i\le \anntime$ we have
\begin{equation}
\annerror{i} = O(\log n).
\end{equation}
This can easily seen by induction on $i$, since
\begin{align*}
\annerror{i} & = 1 + O(\anntime(\annerror{i-1}(1+\eps)^\anntime\annalpha +\eps))\\
& = 1+ O(\anntime(\annerror{i-1}\eps/n^\annconst + \eps))\\
& = 1+O(\anntime\eps)\\
& = O(\log n).
\end{align*}
To simplify notation we define $\annea{i}:= (1+\eps)^i \annalpha$. Also, let
\[
p_i:=\left(1+ \annerror{i-1}\annea{i-1}\right)^\cconst p
\]
and note that for $i\le \anntime$ we have $p\le p_i \le p_\anntime = (1+o(1))p$ (and in particular $p_i\le 1$). We now have (using $t=\cconst s$ above and the induction hypothesis)
\begin{align*}
\EE\left(e^{\annalpha \lbrgensize{i}}\right) & = \sum_s \binom{\binom{n}{k-j}}{s}p^s (1-p)^{\binom{n}{k-j}-s}\, \EE\left(e^{\annalpha \lbrgensize{i-1}}\right)^{\cconst s} \\
& \le \sum_s \binom{\binom{n}{k-j}}{s}p^s (1-p)^{\binom{n}{k-j}-s}\left(1+\annerror{i-1}\annea{i-1}\right)^{\cconst s} \\
& = \sum_s \binom{\binom{n}{k-j}}{s} p_i^s (1-p)^{\binom{n}{k-j}-s} \\
& \le \left(\frac{1-p}{1-p_i}\right)^{\binom{n}{k-j}}\sum_s \binom{\binom{n}{k-j}}{s} p_i^s (1-p_i)^{\binom{n}{k-j}-s}.
\end{align*}
The terms in the sum are simply binomial probability expressions, now with a slightly different probability, and therefore their sum is $1$. Thus we obtain
\begin{align*}
\EE\left(e^{\annalpha \lbrgensize{i}}\right) & \le \left(\frac{1-p}{1-p_i}\right)^{\binom{n}{k-j}}\\
& = \left(1+(p_i-p)(1+p_i + p_i^2 + \ldots)\right)^{\binom{n}{k-j}}\\
& \le 1 + \binom{n}{k-j}p\left(\frac{p_i}{p}-1\right)(1+p_i + \ldots) \\
&\qquad+ \frac{\binom{n}{k-j}^2}{2}p^2 \left(\frac{p_i}{p}-1\right)^2(1+p_i+\ldots)^2 + O\left(\left(\frac{p_i}{p}-1\right)^3\right).
\end{align*}
Now
\[
\frac{p_i}{p}-1 = \cconst \annerror{i-1} \annea{i-1} + \binom{\cconst}{2}\annerror{i-1}^2 \annea{i-1}^2 + O(\annerror{i-1}^3\annea{i-1}^3)
\]
so we have (using the fact that $\annea{i-1}\le \annea{i}$)
\begin{align*}
\EE\left(e^{\annalpha \lbrgensize{i}}\right)  \le \; & 1+ (1+\eps)\left(\annerror{i-1}\annea{i} + \frac{\cconst-1}{2}\annerror{i-1}^2 \annea{i}^2 + O(\annerror{i-1}^3\annea{i}^3) \right)(1+p_i+\ldots)\\
& + (1+\eps)^2\left(\frac{\annerror{i-1}^2}{2}\annea{i}^2 + O(\annerror{i-1}^3 \annea{i}^3)\right)(1+p_i+\ldots)^2 + O(\annerror{i-1}^3\annea{i}^3)\\
\le \; & 1 + \annerror{i-1}\annea{i} \left(1 + \eps + p_i + \frac{\cconst}{2}\annerror{i-1}\annea{i} + O(\annerror{i-1}^2\annea{i}^2 + p_i^2 + \eps^2) \right).
\end{align*}
Finally, let us observe that, provided $i(1+\eps)^i \annalpha =o(1)$, we have
\begin{align*}
& \annerror{i-1} \left(1 + \eps + p_i + \frac{\cconst}{2}\annerror{i-1}\annea{i} + O(\annerror{i-1}^2 \annea{i}^2 + p_i^2 + \eps^2) \right)\\
&\qquad=  \left(1+\annexcess i\left(\frac{\cconst}{2}\annerror{i-2}\annea{i-1} + \eps\right)\right)\left(1+\eps + \frac{\cconst}{2}\annerror{i-1}\annea{i} + o(\eps + \annerror{i-1}\annea{i})\right).
\end{align*}
Using the facts that $\annerror{i-2}\le \annerror{i-1}$ and $\annea{i-1} \le \annea{i}$, this gives us
\begin{align*}
& \annerror{i-1} \left(1 + \eps + p_i + \frac{\cconst}{2}\annerror{i-1}\annea{i} + O(\annerror{i-1}^2 \annea{i}^2 + p_i^2 + \eps^2) \right)\\
&\qquad\le  1 + \annerror{i-1}\annea{i} \left(\annexcess i\frac{\cconst}{2} + \frac{\cconst}{2} + o(1)\right) + \eps\left(\annexcess i + 1 + o(1)\right)\\
&\qquad\le  1 + \annerror{i-1}\annea{i} \annexcess (i+1)\frac{\cconst}{2} + \eps\annexcess (i+1)\\
&\qquad=  \annerror{i}
\end{align*}
as required.
\end{proof}

\begin{proof}[Proof of Lemma~\ref{lem:getbig}]
Now for any number $a$, let
\[
q_a:=\Pr (\lbrgensize{\anntime}\ge a) 
\]
We set $z:=n^{j-1+\delta}$ and $y_i:=(1+\eps)^\anntime n^{i\annconst}/(2\eps)$ for every $i=1,2,\ldots$ and note that
\begin{align*}
y_1 & = n^{(j-1+\delta + \annconst)\eps^{-1}\log (1+\eps)} n^\annconst /\eps \\
& \ge n^{j-1+\delta + 3\annconst/2}/\eps \\
& > z.
\end{align*}
Our main aim is to find a lower bound on $q_z$. We observe that
\begin{equation}\label{eq:markovtype}
\EE (\lbrgensize{\anntime}) \le (1-q_z)  z + q_z  y_1 + \sum_{i=1}^\infty q_{y_i}y_{i+1}.
\end{equation}
Recall that
$$\annerror{\anntime}\anntime\left(1+\eps\right)^\anntime\annalpha =o(1) $$
and thus Markov's inequality tells us that
\[
q_{y_i}\le \frac{\EE \left(e^{\annalpha \lbrgensize{\anntime}}\right)}{e^{\annalpha y_i}} \stackrel{\eqref{IndAsymp}}{\le} \frac{1+\annerror{\anntime}\anntime\left(1+\eps\right)^\anntime\annalpha}{e^{n^{(i-1/2)c}}/2}\le e^{-n^{ic/3}}
\]
for $n$ sufficiently large. Hence
\begin{align*}
 \sum_{i=1}^\infty q_{y_i} y_{i+1} & \le \frac{(1+\eps)^\anntime}{2\eps} \sum_{i=1}^\infty e^{-n^{ic/3}}n^{(i+1)c}\\
& \le \eps^{-1}n^{j-1+\delta+c} e^{-n^{c/4}}\\
& = o(e^{-n^{c/5}}) .
\end{align*}
We have thus shown that in~\eqref{eq:markovtype} the contribution to $\EE (\lbrgensize{\anntime})$ from the upper tail is negligible. Now (\ref{eq:markovtype}) tells us that
\begin{align*}
q_z y_1 - q_z z \ge \EE (\lbrgensize{x}) - z +  o(e^{-n^{c/5}})
\end{align*}
and therefore
\begin{align*}
q_z & \ge \frac{(1+\eps)^\anntime - n^{j-1+\delta} +  o(e^{-n^{c/5}})}{(1+\eps)^\anntime n^c/(2\eps) -n^{j-1+\delta}}\\
& \ge \frac{\eps(1+\eps)^\anntime (1+o(1))}{2n^c (1+\eps)^\anntime (1+o(1)}\\
& \ge \eps /n^c,
\end{align*}
where we used the fact that $ n^{j-1+\delta}= o((1+\eps)^\anntime)$.
This completes the proof of Lemma~\ref{lem:getbig}.
\end{proof}

We now show how to deduce Lemma~\ref{lem:smallstart} from Lemma~\ref{lem:getbig}. We first consider the case $\ell=0$. In this case we aim to prove that with very high probability $e\left(\compone\left(\starttime{j-1}\right)\right) = o(\lcompconst n^{j})$.

We first observe that if the statement does not hold, then we certainly have $e\left(\compone\left(\starttime{j-1}\right)\right) \ge \lcompconst n^j /\omega$, for any $\omega\rightarrow \infty$, but each generation up to time $\starttime{j-1}$ only has size at most $n^{j-1+\delta}$.

We now consider two cases for possibilities of how the desired conclusion might fail, and show that each of these is very unlikely.

\noindent \textbf{Case 1:} There is a generation $i$ of size at least $n^{1/2-\delta/2+\annconst}$.

In this case, using $\lbr$ as a lower coupling for the search process, we begin $y=n^{1/2-\delta/2+\annconst}$ independent new processes at time $i$. By Lemma~\ref{lem:getbig}, each of these has a probability of at least $\eps /n^\annconst$ of reaching size at least $n^{j-1+\delta}$ within $\anntime$ steps. The probability that $\starttime{j-1}>i+\anntime$ is therefore at most
\begin{align*}
(1-\eps/n^\annconst)^y & \le \exp (-\eps y/n^\annconst)\\
& \le \exp (-n^{-1/3} n^{1/2-\delta/2 + \annconst}/n^\annconst)\\
& = \exp (-n^{\delta/2}).
\end{align*}
We may therefore assume that Case 1 does not occur for $i \le \starttime{j-1}-\anntime$.

\noindent \textbf{Case 2:} $\starttime{j-1}\ge (\anntime+1)n^{1/2-\delta/2 + \annconst}$.

To show that this is unlikely we once again aim to consider $y$ independent processes, but in order to do this we take one individual from each of the generations $i(\anntime+1)$, for $i=0,1,2,\ldots, n^{1/2-\delta/2 +\annconst}-1$ and consider the lower coupling $\lbr$ for the search process. Technically these are not independent processes, since one may be a subprocess of the other, but for this reason we consider $\lbr (\anntime)$, the process which is cut off after $\anntime$ steps. These processes are now independent, and the same calculation as above shows that the probability that none of them become large enough after $\anntime$ steps is very small. We may therefore assume that Case 2 does not occur.

However, if neither case occurs, then we have (assuming $j\ge 2$)
\begin{align*}
e(G') & \le (\anntime+1)n^{1/2-\delta/2+\annconst}n^{1/2-\delta/2+\annconst} + \anntime n^{j-1+\delta}\\
& \le 2\anntime (n^{1-\delta+2\annconst} + n^{j-1+\delta})\\
& \le n^{j-1 + 2\delta}/\eps\\
& = o(\lcompconst n^j)
\end{align*}
as required.

This shows that Lemma~\ref{lem:smallstart} holds for $\ell=0$. However, now we know that up to time $\starttime{j-1}$ we have found few edges, which intuitively should mean that we are early in the process, and that all maximum degrees will be small.
More precisely, $e\left(\compone\left(\starttime{j-1}\right)\right)\le \lcompconst n^j /\omega$ and therefore by Property~\eqref{prop:edges}, the number of queries we have made in the search process is at most
$$\tfrac{\lcompconst n^j }{\omega (1-\eo)p_0}  \le \lcompconst n^k/\sqrt{\omega}.$$
Therefore applying Property~\eqref{prop:crudemaxdeg} with $\alpha = \lcompconst/\sqrt{\omega}$, we have that
$$\Delta_\ell\left(\compone\left(\starttime{j-1}\right)\right) = O(\lcompconst n^{j-\ell}/\sqrt{\omega})=o(\lcompconst n^{j-\ell})$$
for ever $\ell = 1,\ldots,j-1$ as required. This completes the proof of Lemma~\ref{lem:smallstart}.\hfill \qed

\section{Concluding Remarks}\label{sec:conclusion}

\subsection{Hypertrees}

In this paper, we used the fact that Lemma~\ref{lem:crudemaxdeg} can be applied to the search process $\ubfs$ to show that $\lbr \prec \ubfs \prec \ubr$. However, this also applies to the following variant of the search process: We define $\lbfs$ to be the corresponding breadth-first search process in which a $k$-set may only be queried if it contains $\cconst$ neutral $j$-sets (as opposed to at least one for $\ubfs$).

$\lbfs$ is a search algorithm specifically looking for a tree, where we define a tree to be a component with $e$ edges and $\cconst e +1$ $j$-sets. Note that this algorithm will not necessarily reveal all of a component; however all the arguments involving $\ubfs$ which we use in this paper also hold for $\lbfs$. Thus we deduce that the number of $j$-sets contained in large trees is approximately $\tfrac{2\eps}{\cconst}\tbinom{n}{j}$, and indeed these (rooted) trees are smooth in the sense of the smooth boundary lemma.

\vspace{0.7cm}

Since random graphs have been so extensively studied, various further questions immediately suggest themselves, regarding whether we can also prove similar results for hypergraphs.

\subsection{The critical window}

Theorem~\ref{thm:main} contains two conditions which give lower bounds on $\eps$, namely $\eps^3 n^j \rightarrow \infty$ and $\eps^2 n^{1-2\delta}\rightarrow \infty$. The natural conjecture is that only the first of these should be genuinely necessary, and thus the result should hold provided $\eps \gg n^{-j/3}$. Note that for $\eps = \Theta(n^{-j/3})$, the bounds from the super-critical case ($\Theta (\eps n^j)$) and the sub-critical case ($O(\eps^{-2} \log n)$) match up to the $\log n$ term, suggesting that we have a smooth transition.

However, our proof method requires the additional second condition, which takes over when $j\ge 2$. Thus for these cases, our range of $\eps$ is probably not best possible.

This additional condition arises in the proof because we need the smooth boundary lemma. The boundary has size up to $\lcompconst^2 n^j=o(\eps^2 n^j)$, and the typical degree within the boundary of an $\ell$-set is $o(\eps^2 n^{j-\ell})$. If we are to show that these degrees are concentrated, we need this typical degree to be large, which in the case of $\ell=j-1$ means $\eps^2 n$ must be large, thus leading to our condition on $\eps$.

If we were to attempt to do away with this condition, we would need to have some control over degrees which may be very small. Presumably we would need to determine more precisely what the probability distribution of such degrees is.

\subsection{The distribution of the size of the giant component}

The size of the giant component shortly after the phase transition is a random variable whose mean we have (asymptotically) determined in this paper, and we have further shown that it is concentrated around its mean. However, we have not proved what the actual distribution of this random variable is.

The most likely candidate would be a normal distribution as is the case for graphs (as proved by Pittel and Wormald~\cite{PW05} and by Luczak and \L uczak~\cite{LL06}). More generally for the $1$-component in $k$-uniform hypergraphs, analogous results were proved for various ranges of $\eps$ by Karo\'nski and \L uczak~\cite{KarLuc02} and by Behrisch, Coja-Oghlan and Kang~\cite{BCOK14}, and for the whole of the supercritical regime ($\eps \gg n^{-1/3}$) by Bollob\'as and Riordan~\cite{BR12}. For these cases, central and local limit theorems are known. It would be interesting to prove similar results for the size of the largest $j$-component.

\subsection{The structure of the components}

For graphs, it is well known that shortly after phase transition, all components are whp trees or at most unicyclic. It would be interesting to know if something similar holds in hypergraphs. (A natural generalisation of unicyclic would be the most tree-like connected non-tree structure, i.e. a $j$-component with $e$ edges and $\cconst e$ $j$-sets. In this case, when discovering the component, exactly one $j$-set would be seen twice.) It follows very easily from the results of this paper and our remarks about $\lbfs$ that for \emph{almost} every $j$-set, the component containing it is a tree. Once again, the case $j=1$ has previously been studied (see~\cite{KarLuc02},~\cite{BCOK14}). 

\subsection{$\ell$-Cores}
The study of cores in random graphs was initiated by Bollob\'as. The $\ell$-core is the (unique) largest subgraph of minimum degree at least $\ell$.

For hypergraphs, the degree has many possible generalisations just as connectivity does. For each of these, we may then define the $\ell$-core and consider its properties. For vertex-degree, Molloy~\cite{Mo05} determined the critical threshold for the existence of a non-empty $\ell$-core and its asymptotic size, but in general this is still an open question.

These questions and many more suggest themselves as the first steps in the developing field of random hypergraph theory.

\appendix

\section{Proof of Lemma~\ref{lem:expansionbound}}\label{app:proofs}

Recall that we proved only the upper bound for the first half of Lemma~\ref{lem:expansionbound}. For the lower bound, we note that since we are in the range where $\lbr$ will give us a lower bound, $\jgensize{i+1}$ stochastically dominates a random variable $Z_{i+1}$ with the distribution of $\cconst\cdot\Bi \big(x_i (1-\elbr)\binom{n}{k-j},p\big)$. Thus we have, again by the Chernoff bound (Theorem~\ref{thm:chernoff}),
\begin{align*}
\Pr \left(\jgensize{i+1}\le (1-2\elbr)(1+\eps)x_i\right) & \le \Pr\left(Z_{i+1}\le (1-2\elbr)(1+\eps)x_i\right)\\
& \le \Pr \left(\tfrac{Z_{i+1}}{\cconst}\le (1-\elbr)\EE\left(\tfrac{Z_{i+1}}{\cconst}\right)\right)\\
& \le  \exp \left(\frac{-(\elbr)^2 \EE(Z_{i+1})}{2\cconst}\right)\\
& = \exp \left(-\Theta \left(\elbr^2 x_i\right)\right)\\
& \le \expprob
\end{align*}
provided that $x_i \ge n^{1-\delta}$.

For the second half, the calculation is very similar. $\jgensize{i+1}$ is dominated by a random variable $Y_{i+1}$ with distribution $\cconst\cdot\Bi \big(n^{\delta} \binom{n}{k-j},p\big)$, and so
\begin{align*}
\Pr \left(\jgensize{i+1}\ge 2n^{\delta}\right) & \le \Pr\left(Y_{i+1}\ge 2n^{\delta}\right)\\
& \le  \exp \left(\frac{-(1-\eps)^2 \EE(Y_{i+1})}{3\cconst}\right)\\
& = \exp \left(-\Theta \left(n^{\delta}\right)\right)\\
& \le \expprob. 
\end{align*}

\vspace{-0.6cm}\qed \vspace{0.2cm}

\section{Branching Processes}\label{app:branchings}

Recall that we aim to calculate the asymptotic value of the survival probability $\varrho$ of the branching process $\ubr$, in which the number of children has distribution $\cconst \cdot \Bi \big(\binom{n}{k-j},p\big)$, where $p=(1+\eps)p_0$.
For technical reasons, it is slightly easier first to calculate the probability $\varrho_0$ that at least one of $\cconst$ independent branching processes (each an instance of $\ubr$) survives. In this case the number of children in the first generation has distribution $\Bi \big(\cconst \binom{n}{k-j},p\big)$.

By standard results for branching processes (see~\cite{Harris63}) we know that $\varrho>0$. We note that the processes all die out if and only if all subprocesses starting at children in the first generation die out. Making a case distinction on the number of such children, we obtain
\begin{align*}
1-\varrho_0 &= \sum_{i=0}^{\cconst\binom{n}{k-j}} \Pr \left(\Bi \left(\cconst\binom{n}{k-j},p\right)=i\right) (1-\varrho_0)^{i}\\
& = \sum_{i=0}^{\cconst\binom{n}{k-j}} \binom{\cconst \binom{n}{k-j}}{i} p^i (1-p)^{\cconst\binom{n}{k-j}-i} (1-\varrho_0)^i\\
& = \left( p(1-\varrho_0) + 1-p\right)^{\cconst \binom{n}{k-j}}\\
&=\left( 1-p\varrho_0 \right)^{\cconst \binom{n}{k-j}}\\
& = \left(1-(1+\eps)p_0\varrho_0\right)^{p_0^{-1}}.
\end{align*}
Solving this equation for $\eps$ yields
\begin{equation}\label{survivalEquationSolvedForVareps}
\eps=\frac{1-\left(1-\varrho_0\right)^{p_0}}{\varrho_0p_0}-1=\frac{f(\varrho_0)}{\varrho_0p_0}\, ,
\end{equation}
for 
\begin{align*}
f(\varrho_0)&=1-\varrho_0p_0-\left(1-\varrho_0\right)^{p_0}\\
& = \frac{p_0(1-p_0)}{2}\varrho_0^2 + \frac{p_0(1-p_0)(2-p_0)}{6}\varrho_0^3 + \ldots\, . 
\end{align*}
Since this expression has only non-negative coefficients, \eqref{survivalEquationSolvedForVareps} implies that
$$\eps\geq \frac{\frac{p_0(1-p_0)}{2}\varrho_0^2}{\varrho_0p_0}=\frac{\varrho_0}{2}-O(p_0\varrho_0)$$
 and hence $\varrho_0=o(1)$ since $\eps=o(1)$ and $p_0=o(1)$. Consequently we derive the asymptotic estimate
 \begin{equation}\label{survivalSnowball}
 \eps=\frac{\varrho_0}{2}+O\left(\varrho_0p_0+\varrho_0^2\right)\sim \frac{\varrho_0}{2}
 \end{equation}
  from \eqref{survivalEquationSolvedForVareps}. Since $1-\varrho_0=(1-\varrho)^\cconst$, we have 
  \begin{equation}
  \varrho = 1-\left(1-\varrho_0\right)^{\frac{1}{\cconst}}=\frac{\varrho_0}{\cconst}+O\left(\varrho_0^2\right)\stackrel{\eqref{survivalSnowball}}{\sim} \frac{2\eps}{\cconst}\, .\label{survival}
  \end{equation}

\begin{remark}\label{remarkSurvivalLbr}
Note that an almost identical calculation shows that the survival probability $\varrho_*$ of the lower coupling process $\lbr$ also satisfies 
\begin{equation}
\varrho_*\sim \frac{2\eps}{\binom{k}{j}-1}\, .
\label{survivalSub}
\end{equation}
This is because the expected number of children of each individual is $(1-\elbr)(1+\eps) = 1+\eps + o(\eps)$.
\end{remark}

Recall that $\dualbr$ is the dual process, i.e. the process $\ubr$ conditioned on $\death$, the event that $\ubr$ dies. We consider the children of an individual as being grouped into $\emph{litters}$, each containing $\cconst$ children, where the number of litters has distribution $\Bi \big(\binom{n}{k-j},p\big)$ in $\ubr$. We want to show that $\dualbr$ is a branching process in which each individual has a number of litters of children which has binomial distribution.
 
To analyse the dual process we consider the change of the probability of $\death$ subject to the presence $\event{e}$ of a litter of children of an individual $J$. We denote by $\gen{J}$ the set of individuals in the same generation (of $\ubr$) as $J$ and calculate the probabilities $\Pr\left(\death\cond\event{e}\right)$ and $\Pr\left(\death\cond\neg\event{e}\right)$ by conditioning on the number of litters $s(\gen{J})$ of children of individuals in $\gen{J}$ and obtain
\begin{align*}
\Pr \left(\death\cond\event{e}\right) &= \sum_{i=0}^{M-1}\Pr(s\left(\gen{J})=i+1\cond \event{e}\right)(1-\varrho)^{(i+1)\cconst}\\
& = \sum_{i=0}^{M-1} \Pr \big(\Bi (M-1,p)=i\big)(1-\varrho)^{(i+1)\cconst}\, ,\\
\Pr \left(\death\cond\neg\event{e}\right) & = \sum_{i=0}^{M-1}\Pr\left(s(\gen{J})=i\cond \neg \event{e}\right)(1-\varrho)^{i\cconst}\\
 & = \sum_{i=0}^{M-1} \Pr \big(\Bi (M-1,p)=i\big)(1-\varrho)^{i\cconst}
\end{align*}
and so
\begin{align}\label{changeDeathProbability}
\frac{\Pr\left(\death\cond\event{e}\right)}{\Pr\left(\death\cond\neg\event{e}\right)}=\left(1-\varrho\right)^{\cconst}.
\end{align}
Therefore we get
\begin{align}
\Pr\left(\event{e}\cond \death\right)
&=\frac{\Pr\left(\death\cond\event{e}\right)\Pr\left(\event{e}\right)}{\Pr\left(\death\cond\event{e}\right)\Pr\left(\event{e}\right)+\Pr\left(\death\cond\neg\event{e}\right)\Pr\left(\neg\event{e}\right)}\nonumber\\
&=\frac{\frac{\Pr\left(\death\cond\event{e}\right)}{\Pr\left(\death\cond\neg\event{e}\right)}\cdot\Pr\left(\event{e}\right)}{\frac{\Pr\left(\death\cond\event{e}\right)}{\Pr\left(\death\cond\neg\event{e}\right)}\cdot\Pr\left(\event{e}\right)+\Pr\left(\neg\event{e}\right)}\nonumber\\
&\stackrel{\eqref{changeDeathProbability}}{=}\frac{\left(1-\varrho\right)^{\cconst}p}{1-p\left(1-\left(1-\varrho\right)^{\cconst}\right)}\nonumber\, ,
\end{align}
and note in particular that this probability is independent of the choice of $e$ and $J$, hence we denote it by $\pdualbr$. Moreover we obtain the estimate 
\begin{align}
\pdualbr& = \frac{(1-\varrho)^{\cconst}p}{1-O(p\varrho)}\nonumber\\
&=p\left(1-\varrho\right)^{\cconst}+O(p^2\varrho)\nonumber\\
&\stackrel{\eqref{survival}}{=}(1-\eps+o(\eps))p_0\,\nonumber .
\end{align} 
Furthermore a very similar calculation shows that conditioned on $\death$ the presence of $e$ is still independent of all other edges. Hence the dual process $\dualbr$ is a branching process whose offspring distribution is given by
$$\cconst \cdot \Bi\left(\binom{n}{k-j},\pdualbr\right).$$

\ 

\bibliographystyle{amsplain}
\bibliography{Bibliography}

\ 

\end{document}